\newtheorem{remark}{Remark}[section]
\newtheorem{theorem}{Theorem}[section]
\newtheorem{prop}[theorem]{Proposition}
\newtheorem{lemma}{Lemma}[section]
\newtheorem{cor}{Corollary}[section]
\theoremstyle{definition}
\newtheorem{Example}{Example}[section]
\numberwithin{equation}{section}
\numberwithin{table}{section}
\title{Change Point Estimation in Panel Data with Temporal and Cross-sectional Dependence}
\date{\today}
\author[1]{Monika Bhattacharjee}
\author[2]{Moulinath Banerjee}
\author[3]{George Michailidis}
\affil[1]{Informatics Institute, University of Florida, USA}
\affil[2]{Department of Statistics, University of Michigan, USA}
\affil[3]{Department of Statistics \& Informatics Institute, University of Florida, USA}
\begin{document}

\maketitle

\begin{abstract}   
We study the problem of detecting a {\em common} change point in large panel data based on a mean shift model, wherein the errors exhibit both temporal and cross-sectional dependence. A least squares based procedure is used to estimate the location of the change point. Further, we establish the convergence rate and obtain the asymptotic distribution of the least squares estimator. The form of the distribution is determined by the behavior of the norm difference of the means before and after the change point. Since the behavior of this norm difference is, a priori, unknown to the practitioner, we also develop a novel data driven adaptive procedure that provides valid confidence intervals for the common change point, without requiring any such knowledge. Numerical work based on synthetic data illustrates the performance of the estimator in finite samples under different settings of temporal and cross-sectional dependence, sample size and number of panels. Finally, we examine an application to financial stock data and discuss the identified change points.
\end{abstract}
\medskip

\noindent \textbf{Key words and phrases.} adaptive estimation, autocovariance matrices, change point, estimation, least squares, panel data,   \medskip
 
\noindent{\bf JEL Classification.}  C23, C33, C51 \bigskip

\section{Introduction} \label{sec: intro}

The change point problem for univariate data has a long history in the econometrics and statistics literatures. A broad overview of the technical aspects of the problem is provided in \citet{Basseville1993detection, csorgo1997limit}. The problem has a wide range of applications in economics \citep{baltagi2016estimation, RePEc:siu:wpaper:07-2015, doi:10.1080/01621459.2015.1119696} and finance \citep{frisen2008financial}, while other standard areas include quality monitoring and control \citep{qiu2013introduction}, as well as newer ones, like genetics and medicine \citep{chen2011parametric} and neuroscience \citep{koepcke2016single}. 

In many of these applications, the multivariate (panel) data streams exhibit both temporal, as well as cross-sectional dependence, since they reflect different facets of coordinated activity -e.g. stock price co-movements, cross-talk amongst brain regions and co-expression of members of genetic regions.

The rather limited technical literature on change point analysis for panel data focuses on the common break signal plus noise model given by
\begin{eqnarray}
\label{model.defn}
X_{it} & = & \mu_{i1}+\epsilon_{it}, \ \ \ t=1,2,\cdots, [n\tau_n] \\ \nonumber
X_{it} & = & \mu_{i2}+\epsilon_{it}, \ \ \ t=[n\tau_n]+1,\cdots,n \\
       &    & i=1,\cdots,p, \nonumber
\end{eqnarray}
where $\tau_n$ represents a common break fraction for all $p$ series (streams), the difference $|\mu_{i1}-\mu_{i2}|$ represents the magnitude of the shift for each series, and $\epsilon_{it}$ are random error processes. The primary objective is to estimate the location of change point $\tau_n$, as well as the levels of the series before and after it.

\noindent
{\bf Literature review:}
Different aspects of change point analysis have been studied in the literature for the aforementioned model, wherein the
random process $\{\epsilon_{it}\}$ exhibits {\em only} temporal dependence (assumed cross-sectionally independent). 
\citet{bai2010common} employed a least squares criterion to estimate the common change point $\tau_n$ and established its asymptotic distribution, while \citet{horvath2012change} developed tests for the presence of a
change point during the observation period. \citet{pevstova2017change} and \citet{bardwell2018most} provided a method to detect a common break point even when the change happens immediately after the first time point or just before the last epoch for panels with limited number of time points. \citet{cho2015multiple} segmented the second-order structure of a high-dimensional time series and used the CUSUM statistic to detect multiple change points. Further,
\citet{kim2014common, baltagi2016estimation, barigozzi2018simultaneous,  westerlund2018common} investigated estimation of the change point in panel data, wherein the cross-sectional dependence is modeled by a common factor model, which effectively
makes the cross-sectional dependence low-dimensional. 

However, as previously argued, often both temporal and cross-sectional dependence is present in panel data under the signal plus noise model. To the best of our knowledge,  \citet{cho2016change} represents the only work on change point analysis in such a setting and investigates both single and multiple change-point detection. The nature of the cross-sectional dependence is general, while geometrically decaying $\alpha$-mixing is assumed across time, and the number of series $p$ can grow at a polynomial rate in the number of time points $n$. Another work that considers both temporal and cross-sectional dependence is by \citet{SS2017}, that examines change point analysis for sparse high-dimensional vector autoregressive models.

\noindent
{\bf Key contributions:} In this paper, we consider the problem of single change point detection in high-dimensional panel data using a least squares criterion, where the temporal and cross-sectional dependence are captured through an infinite order moving average process (MA($\infty$)). We establish the convergence rate (Theorem \ref{thm: conrate}) and derive the asymptotic distribution (Theorem \ref{thm: asympdist}) for the least squares estimator of the change point. Further, since there are {\em multiple regimes} for the asymptotic distribution of the change point estimate determined by the underlying 
{\em unknown} signal-to-noise ratio, we also provide a {\em self-adaptive, data driven method} for computing confidence intervals of the change point that does not require us to know the specific regime.

Note that this work extends the analysis in \citep{bai2010common} to the case where the multivariate MA($\infty$) error process is \emph{correlated} across its coordinates. Such correlations introduce a number of technical challenges that are successfully resolved in Theorems \ref{thm: conrate} and \ref{thm: asympdist}. Another broadly related work is that of
\citet{cho2016change}. For the single change point analysis that is the focus of this paper, we note that our rate result
is obtained under weaker detectability conditions for a much larger class of error processes and allowing faster growth of the time series $p$ as a function of the number of time points $n$. Further, the obtained rate of the change point is sharper and in addition we derive its asymptotic distribution - a more detailed discussion of these points are provided in Remark \ref{rem: cho} and Section \ref{sec: simulation}. On the other hand, the latter paper develops methodology for detection of multiple change points, which is outside the scope of the current work.  

The remainder of the paper is organized as follows. In Section \ref{sec: model}, we describe the signal-plus-noise model exhibiting a single change point in its mean structure, with temporal and cross-sectional dependence introduced through a vector moving average process.   
In Section \ref{sec: results}, we define the least squares estimator for the change point and establish its convergence rate and asymptotic distribution in Theorems \ref{thm: conrate} and \ref{thm: asympdist}, respectively.  Further, we discuss that the assumptions required for these theorems hold under very mild conditions for certain illustrative examples considered in Section \ref{subsec: example} which are employed often in practice. In Section \ref{sec: adap}, we propose a data based adaptive inference scheme  for obtaining the asymptotic distribution of the change point estimate in practice which does not require prior knowledge on the signal-to-noise regime. Performance evaluation results based on synthetic data are presented in Section \ref{sec: simulation}.  Finally, an application of the proposed methodology to financial data is discussed in Section \ref{sec: Dataanalysis}.  Additional technical details and all proofs are delegated to an Appendix - Sections \ref{sec: appA} and \ref{sec: proof}, respectively.

\subsection{Modeling Framework} \label{sec: model}

We observe $\{X_{t,p(n)}^{(n)}:\ 1 \leq t \leq n\}$ (dimension $p = p(n)$ depends on sample size $n$) from the following model:  
\begin{eqnarray}
X_{t,p(n)}^{(n)} &=& \mu_{1,p(n)}I(t \leq [n\tau_n]) +\mu_{2,p(n)}I(t > [n\tau_n])  + \varepsilon_{t,p(n)}^{(n)}\ \ \ \text{where} \label{eqn:model} \\
\varepsilon_{t,p(n)}^{(n)} &=& \sum_{j=0}^{\infty} A_{j,p(n)}^{(n)}  \eta_{t-j,p(n)}. \nonumber
\end{eqnarray}
Here, $\tau_n$ is the change point and 
$\mu_{1,p(n)} = (\mu_{11,p(n)},\mu_{12,p(n)},\ldots,\mu_{1p(n),p(n)})^\prime$, $\mu_{2,p(n)} = (\mu_{21,p(n)},\mu_{22,p(n)},$ $\ldots,\mu_{2p(n),p(n)})^\prime$ are $p(n)$-dimensional mean vectors before and after the change point.  All processes
$X_{t,p(n)}^{(n)} = (X_{1t,p(n)}^{(n)}, $ $X_{2t,p(n)}^{(n)},\ldots, X_{p(n)t,p(n)}^{(n)})^\prime$,  $\varepsilon_{t,p(n)}^{(n)} = (\varepsilon_{1t,p(n)}^{(n)},\varepsilon_{2t,p(n)}^{(n)},\ldots,\varepsilon_{p(n)t,p(n)}^{(n)})^\prime$, and  $\eta_{t,p(n)} = (\eta_{1t,p(n)},\eta_{2t,p(n)},$ $\ldots,\eta_{p(n)t,p(n)})^\prime$ correspond to $p(n)$-dimensional random vectors. Notation-wise we often write $p$ instead of $p(n)$, when there is no room for confusion.  Further, $\{\eta_{kt,p}:\ k,t,p \geq 1\}$ are \textit{i.i.d.}~ mean $0$, variance $1$ random variables  with finite 4th moments; i.e. $E|\eta_{kt,p} |^4 < \infty$. Finally, the coefficient matrices  $A_{j,p}^{(n)}$ correspond to $p \times p$ deterministic matrices. We assume for all $t \geq 1$ and $j \geq 0$,  $\{\eta_{t,p}:\ p \geq 1\}$ 
are nested. That means the first $p$ components of $\eta_{t,p+1}$ is $\eta_{t,p}$. 
Clearly, $X_{t,p(n)}^{(n)}$ and $\varepsilon_{t,p(n)}^{(n)}$ are not nested, but they form triangular sequences.  We consider $p = p(n) \to \infty$ as $n \to \infty$. For expository clarity, we shall use $\mu_1, \mu_2, X_t, X_{kt}, \epsilon_t, \varepsilon_{kt}, \eta_t, \eta_{kt}, \mu_{1k}, \mu_{2k},  A_j$ for $\mu_{1,p(n)}$, $\mu_{2,p(n)}$,  $X_{t,p(n)}^{(n)}$, $X_{kt,p(n)}^{(n)}$, $\varepsilon_{t,p(n)}^{(n)}$, $\varepsilon_{kt,p(n)}^{(n)}$, $\eta_{t,p(n)}$, $\eta_{kt,p(n)}$,  $\mu_{1k,p(n)}$, $\mu_{2k,p(n)}$, $A_{j,p}^{(n)}$, respectively. 
\vskip 2pt

The objective is to estimate the change point $\tau_n$, together with all other model parameters  $[\mu_1, \mu_2\ \text{and}\  \{A_j: j \geq 0\}]$. 

In the above model,  the data stream process $\{X_t\}$ exhibits dependence across both time and co-ordinates. Since 
$\{\varepsilon_t\}$ is a stationary process, the covariance between $\varepsilon_t$ and $\varepsilon_{t+h}$ depends only on lag $h$. Further, the population autocovariance matrix of order $u$ is given by
\begin{eqnarray}
\Gamma_h := E(\varepsilon_t \varepsilon_{t+h}^\prime) = \sum_{j=0}^{\infty} A_j A_{j+h}^{\prime}. \nonumber 
\end{eqnarray}
Note that $\Gamma_h = \Gamma_{-h}^{\prime}$ are all $p \times p$ matrices.  If $A_j =0\ \forall\ j \geq 1$ then $\Gamma_h =0\ \forall\ h \neq 0$ and we have independence across time. On the other hand, if $A_j$ are all diagonal matrices, then so are 
the $\Gamma_h$ ones, which in turn implies component-wise independence. In this paper, we assume that the coefficient matrices have a general form. 

This model for $\{\varepsilon_{t,p(n)}^{(n)}\}$ has attracted much attention in the literature and various aspects of it have been studied, including estimating the autocovariance matrices (\cite{BB2013a}), studying properties of the spectrum of the sample autocovariance matrices (\cite{LAP2015}, \cite{BB2016}, \cite{WAP2017}), testing for the presence of trends (\cite{CW2018}) and so forth.  
Further, \citet{bai2010common} and \citet{BBM2017} obtained a consistent estimator of the change point when all $A_j$ are diagonal matrices. In this work, we extend the analysis to general coefficient matrices.

\section{The change point estimator and its asymptotic properties} \label{sec: results}
Next, we propose an estimator of the change point and study its asymptotic properties. Since the change point in the model
is driven by changes in the mean structure, we employ a least squares criterion for the task at hand. Specifically,
the estimate $\tau_n$ is obtained by
\begin{eqnarray}
\hat{\tau}_n &=& \arg\min_{b \in (c^*, 1-c^*)} L(b)\ \ \ \text{where} \label{eqn: cpdefine} \\
L(b) &=& \frac{1}{n}\sum_{k=1}^{p}\bigg[\sum_{t=1}^{nb}(X_{kt}-\hat{\mu}_{1k}(b))^2  + \sum_{t=nb+1}^{n}(X_{kt}-\hat{\mu}_{2k}(b))^2\bigg],  \nonumber \\
\hat{\mu}_{1k}(b) &=& \frac{1}{nb}\sum_{t=1}^{nb} X_{kt},\ \ \hat{\mu}_{2k}(b) = \frac{1}{n(1-b)} \sum_{t=nb+1}^{n} X_{kt}. \nonumber
\end{eqnarray}

The first result established is that of the rate of convergence of $\hat{\tau}_n$ in Theorem \ref{thm: conrate}. To that end, let $||\cdot||_2$ denote the spectral norm of a matrix. Further, define $\gamma_p = \sum_{j=0}^{\infty} ||A_j||_2$.  Note that time dependence in $\{X_t\}$ is characterized by the variation of the coefficient matrices $\{A_j\}$ across $j$. Hence, the aggregate $\gamma_p$ provides a measure of cross-sectional dependence in $\{X_t\}$, but not of temporal dependence.  We need the following signal-to-noise (SNR) condition for establishing the convergence rate of $\hat{\tau}_n$. 
\vskip 2pt
\noindent \textbf{(SNR)} $\frac{n\gamma_p^{-2}}{p} ||\mu_1-\mu_2||_2^2 \to \infty$
\vskip 2pt
\noindent Note that $\frac{1}{p} ||\mu_1-\mu_2||_2^2$ is the average signal per model parameter, which drives the occurrence of the change point. The SNR condition intuitively states that the average signal needs to grow faster than  $\frac{\gamma_p^2}{n}$. If the coefficient matrices $\{A_j: j \geq 0\}$ satisfy   $\gamma_p = O(1)$, then the SNR condition reduces to 
\newline
\textbf{(SNR*)}  $\frac{n}{p} ||\mu_1-\mu_2||_2^2 \to \infty$; 
\newline
i.e., the average signal needs to grow faster than $\frac{1}{n}$,  
which is similar to the identifiability conditions in other change point problems that exhibit independence both across
time and across coordinates (see e.g. \cite{BBM2017}).  Examples of  coefficient matrices satisfying $\gamma_p = O(1)$ are discussed in Sections \ref{cor: panelind}, \ref{cor: paneltimeind} and Example \ref{example: 1}(ii). Often $\gamma_p = O(1)$ may not hold, as shown in Examples \ref{example: 1}(i),  \ref{example: band} and \ref{example: band}. In the latter case, the SNR condition is required, which is stronger than the SNR*.  Finally, note that the SNR condition depends only on the total number of time points observed, but not on the nature of the temporal dependence of $\{X_{t}\}$. 

\begin{theorem}  \label{thm: conrate}
Suppose (SNR) holds. Then
$n \gamma_p^{-2} ||\mu_1  - \mu_2||_2^2 (\hat{\tau}_n -\tau_n) = O_{\text{P}}(1)$.
\end{theorem}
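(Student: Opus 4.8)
The plan is to work from the exact algebraic form of the least squares increment $L(b)-L(\tau_n)$, to extract from it a deterministic ``signal'' term of order $|b-\tau_n|\,\|\mu_1-\mu_2\|_2^2$ (which must be nonpositive at $b=\hat\tau_n$, since $\hat\tau_n$ minimises $L$), and then to show that all remaining stochastic terms are, uniformly over $|b-\tau_n|$ larger than a big constant multiple of $r_n:=\gamma_p^2/(n\,\|\mu_1-\mu_2\|_2^2)$, negligible against that signal. The stated rate is then immediate; note that $r_n\to0$ under (SNR), since $n\gamma_p^{-2}\|\mu_1-\mu_2\|_2^2\ge (n\gamma_p^{-2}\|\mu_1-\mu_2\|_2^2/p)\to\infty$.

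The first step is the decomposition. Fix $b>\tau_n$ (the case $b<\tau_n$ being symmetric, with the two segments interchanged), split $\{1,\dots,[nb]\}$ at $[n\tau_n]$ and $\{[n\tau_n]+1,\dots,n\}$ at $[nb]$, and apply to each block the identity $\sum_{t\in S_1\cup S_2}(x_t-\bar x_{S_1\cup S_2})^2=\sum_{t\in S_1}(x_t-\bar x_{S_1})^2+\sum_{t\in S_2}(x_t-\bar x_{S_2})^2+\tfrac{|S_1|\,|S_2|}{|S_1\cup S_2|}(\bar x_{S_1}-\bar x_{S_2})^2$. The residual sums of squares over the common sub-blocks cancel; a within-regime difference of sample means is a pure noise average, while the $[1,[n\tau_n]]$-versus-$([n\tau_n]+1,[nb]]$ mean difference equals $(\mu_1-\mu_2)$ plus noise. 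Writing $a_b,g_b,h_b$ for the $p$-vectors of error averages over $[1,[n\tau_n]]$, $([n\tau_n]+1,[nb]]$, $([nb]+1,n]$ respectively, this yields
\begin{align*}
L(b)-L(\tau_n)=\;&\frac{\tau_n(b-\tau_n)}{b}\,\|\mu_1-\mu_2\|_2^2\;-\;\frac{2\tau_n(b-\tau_n)}{b}\,(\mu_1-\mu_2)'(g_b-a_b)\\
&+\;\frac{\tau_n(b-\tau_n)}{b}\,\|a_b-g_b\|_2^2\;-\;\frac{(b-\tau_n)(1-b)}{1-\tau_n}\,\|g_b-h_b\|_2^2 .
\end{align*}

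The second step separates signal from noise. Since $\tau_n,b$ are bounded away from $0$ and $1$, the first term is $\ge c_1|b-\tau_n|\,\|\mu_1-\mu_2\|_2^2$ for some $c_1>0$. The key point is that the $\|g_b\|_2^2$ pieces of the last two terms combine, using $\tau_n(1-\tau_n)-b(1-b)=-(b-\tau_n)(1-b-\tau_n)$, into $-\tfrac{(b-\tau_n)^2(1-b-\tau_n)}{b(1-\tau_n)}\|g_b\|_2^2$, whose prefactor is $O((b-\tau_n)^2)$ rather than $O(b-\tau_n)$; this cancellation is essential, since $\|g_b\|_2^2$ is the largest of the three error-average norms when $b$ is near $\tau_n$ (its averaging window is the short one). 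After it, every surviving stochastic contribution is a bounded multiple of one of $(b-\tau_n)^2\|g_b\|_2^2$, $\|a_b\|_2^2$, $\|h_b\|_2^2$, $a_b'g_b$, $g_b'h_b$, $(b-\tau_n)(\mu_1-\mu_2)'a_b$, $(b-\tau_n)(\mu_1-\mu_2)'g_b$. From $\Gamma_h=\sum_{j\ge0}A_jA_{j+h}'$ one has $\sum_h\|\Gamma_h\|_2\le\gamma_p^2$, hence $\sum_h|\mathrm{tr}\,\Gamma_h|\le p\gamma_p^2$ and $\sum_h|v'\Gamma_hv|\le\|v\|_2^2\gamma_p^2$, giving $E\|\sum_{t\in W}\varepsilon_t\|_2^2\le|W|\,p\gamma_p^2$ and $E\big(v'\sum_{t\in W}\varepsilon_t\big)^2\le|W|\,\|v\|_2^2\gamma_p^2$ for any window $W$ and vector $v$. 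Partitioning $\{|b-\tau_n|>Kr_n\}$ into dyadic shells $\{2^jKr_n<|b-\tau_n|\le2^{j+1}Kr_n\}$ and invoking a Hájek--Rényi/Doob maximal inequality for the partial sums of the $\mathrm{MA}(\infty)$ processes $\{\varepsilon_t\}$ and $\{(\mu_1-\mu_2)'\varepsilon_t\}$ (using $E|\eta_{kt}|^4<\infty$), the probability that some $b$ in shell $j$ pushes the noise above half the signal is $\le C\,2^{-j}/K$ for the leading cross term $(b-\tau_n)(\mu_1-\mu_2)'g_b$ (which scales like $(b-\tau_n)^{1/2}$ and so gains the geometric factor), and for the quadratic terms is bounded by a shell-independent quantity of order $p\gamma_p^2/(n\|\mu_1-\mu_2\|_2^2)$ (times a bounded factor), which is $o(1)$ by (SNR). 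Summing over shells — a convergent geometric series in the first case, finitely many equal vanishing terms controlled through the second moment of the maximal function in the second — shows: for every $\eta>0$ there is $K=K(\eta)$ with $P\big(\sup_{|b-\tau_n|>Kr_n}[L(\tau_n)-L(b)]\ge0\big)<\eta$ for all large $n$. Since $L(\hat\tau_n)\le L(\tau_n)$, on the complement $\hat\tau_n\in\{|b-\tau_n|\le Kr_n\}$, i.e. $n\gamma_p^{-2}\|\mu_1-\mu_2\|_2^2(\hat\tau_n-\tau_n)=O_{\text P}(1)$.

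The hard part is this uniform control of the stochastic terms against the signal. Unlike the cross-sectionally independent setting, the error-average norms $\|a_b\|_2^2,\|g_b\|_2^2,\|h_b\|_2^2$ are governed by $\mathrm{tr}(\sum_h\Gamma_h)\le p\gamma_p^2$ rather than by a sum of $p$ scalar variances, so the maximal inequalities must be proved for partial sums of the full $\mathrm{MA}(\infty)$ process (which is not a martingale) with exactly the $\gamma_p^2$- and $p\gamma_p^2$-dependences above, and one has to verify that the cancellation in the second step genuinely demotes the dominant $\|g_b\|_2^2$ term; these are the technical difficulties introduced by the correlation across coordinates.
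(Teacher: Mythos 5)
Your proposal is correct in substance and follows the same route as the paper, so only a comparison and one small repair are worth recording. The paper expands $L_k(b)-L_k(\tau_n)$ term by term (its $A_k,B_k,D_k$ decomposition), establishes exactly your two quantitative inputs --- a mean gap $E(L(b)-L(\tau_n))\ge C|b-\tau_n|\,\|\mu_1-\mu_2\|_2^2$ once (SNR) absorbs the $|b-\tau_n|\,p\gamma_p^2/n$ trace terms, and a variance bound $\mathrm{Var}(L(b)-L(\tau_n))\le Cn^{-1}|b-\tau_n|\,\|\mu_1-\mu_2\|_2^2\gamma_p^2$, again using (SNR) to absorb the $|b-\tau_n|\,p\gamma_p^4/n^2$ pieces --- and then obtains uniformity in $b$ from its Lemma \ref{lem: wvan1} (the van der Vaart--Wellner rate theorem, applied with $d_n(b,\tau_n)=\|\mu_1-\mu_2\|_2\sqrt{|b-\tau_n|}$, $\phi_n(\delta)=\delta\gamma_p$, $r_n=\sqrt{n}\,\gamma_p^{-1}$) combined with a Doob-type maximal inequality; your explicit dyadic peeling is what that lemma does internally, and your ANOVA-identity decomposition is algebraically equivalent to the paper's, with the advantage that the $O((b-\tau_n)^2)$ coefficient in front of $\|g_b\|_2^2$ is displayed explicitly rather than emerging from the expansions of $B_k$ and $D_k$ (where the short-window average likewise only appears with a $(b-\tau_n)^2$ factor or in cross terms). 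Your covariance bounds ($\sum_h\|\Gamma_h\|_2\le\gamma_p^2$, hence the $p\gamma_p^2$ trace bound and the $\|v\|_2^2\gamma_p^2$ quadratic-form bound) are precisely those in the paper, and both arguments lean on the same not-fully-detailed maximal inequality for partial sums of the non-martingale MA($\infty$) errors, which the paper dispatches by invoking ``Doob's martingale inequality.'' The one sentence of yours that fails as written is the claim that the pure quadratic noise terms contribute ``finitely many equal vanishing terms'' across shells: the number of shells is of order $\log(1/r_n)\to\infty$, so a shell-independent $o(1)$ bound does not sum to $o(1)$. The fix is what the paper's variance bound already encodes: center those terms (their means are $\lesssim |b-\tau_n|\,p\gamma_p^2/n$, negligible under (SNR)) and control their fluctuations by Chebyshev with the variance bound above, which gives the same geometric $2^{-j}/K$ decay per shell as your leading cross term; with that routine adjustment your argument goes through and delivers the stated rate.
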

If $\gamma_p = O(1)$, then the convergence rate of $\hat{\tau}_n$ is the same as that under independence across both time and panels. For details see Section \ref{cor: paneltimeind}.  However, if $\gamma_p$ grows with $p$, the the convergence rate in Theorem \ref{thm: conrate} is compromised: this is the price paid for growing cross-sectional dependence.  

\begin{remark} \label{rem: baigammamain}
It is easy to see from the proof of Theorem \ref{thm: conrate} and Remark \ref{rem: baigamma} that under cross-sectional independence -i.e. when $A_{j,p} = \text{Diag}\{a_{j,k}:\ 1\leq k \leq p\}\ \forall j\geq 0$-  $\gamma_p$ in SNR can be replaced by a smaller quantity $\tilde{\gamma}_p := \sup_{1\leq k \leq p} \sum_{j=0}^{\infty}|a_{j,k}|$.  As a consequence, when $A_{j,p} = \text{Diag}\{a_{j,k}:\ 1\leq k \leq p\}\ \forall j\geq 0$, the conclusion of Theorem \ref{thm: conrate} continues to hold under the weaker \textbf{(SNR$^\prime$)} condition $\frac{n\tilde{\gamma}_p^{-2}}{p} ||\mu_1-\mu_2||_2^2 \to \infty$.  Further, if $\tilde{\gamma}_p = O(1)$, then SNR$^\prime$ reduces to SNR*. 
\end{remark}

\begin{remark} \label{rem: cho}
As mentioned in the Introduction, \citet{cho2016change} considered the problem of single, as well as multiple change-point detection in high dimensional panel data under general cross-sectional dependence, but for processes that exhibit geometrically decaying $\alpha$-mixing across time. In general, a multivariate MA($\infty$) process is not a geometrically decaying $\alpha$-mixing process (e.g. consider the case of polynomially decaying coefficients). Thus, there are many linear  processes which are eligible under our setting, but can not be accommodated by \cite{cho2016change}. 

Focusing on the results pertaining to a single change point, note that if the coefficients of the MA($\infty$) process are geometrically decaying -e.g. autoregressive (AR) or autoregressive moving average (ARMA) processes-, then it becomes a geometrically decaying $\alpha$-mixing process.  Although AR and ARMA processes can be considered as special cases of the model in \citet{cho2016change}, our results do not follow from those in that paper. The reason is that we employ a least squares based estimator, while that paper considers an estimator derived from a CUSUM statistic. The latter estimator requires the following identifiability condition $\frac{\sqrt{n}}{p \log n}\sum_{k=1}^{p} |\mu_{1k}-\mu_{2k}| \to \infty$  which is stronger than the posited 
(SNR) above. In addition, the obtained convergence rate for the change point estimator is $(\log n)^2 \frac{p}{n} (\sum_{k=1}^{p} |\mu_{1k}-\mu_{2k}|)^{-2}$, which is also slower than the convergence rate of the least squares estimator for $\gamma_p=O(1)$. Moreover, \citet{cho2016change} assumed that all moments of $\{X_t\}$ are finite, whereas we only require finite $4$-th order moments of $\{X_t\}$. Finally, we allow the dimension of the data stream to grow as $\log p \sim n^\delta$ for $\delta \in (0,1)$, vis-a-vis the $p \sim n^\delta$ for some fixed $\delta \in [0,\infty)$ required in the aforementioned paper. 
\end{remark}

\subsection{Asymptotic distribution of the least squares estimator}

Unlike the convergence rate result that can be established under the SNR condition, the derivation of the asymptotic distribution for $\hat{\tau}_n$ is significantly more involved, as presented next. We start by noting that in the panel data setting, the asymptotic distribution of $\hat{\tau}_n$ differs, depending on the following regimes: (I) $\lim_{n \to \infty}  ||\mu_1 - \mu_2||_2 \to \infty$, (II) $\lim_{n\to\infty}  ||\mu_1 - \mu_2||_2 \to 0$ and (III) $\lim_{n\to\infty}  ||\mu_1 - \mu_2||_2 \to c >0$ (see \citet{bai2010common}). 
\newline \indent Recall that
in the presence of a single panel (univariate case), the following two results have been established in the literature:  (i) if $|\mu_1-\mu_2|\rightarrow 0$ (Regime (II)) at an appropriate rate as a function of the sample size $n$, then the asymptotic distribution of the change point is given by the maximizer of a Brownian motion with triangular drift  (for details see \cite{bhattacharya1994}); and (ii) if $|\mu_1-\mu_2|\rightarrow c$ (Regime (III)), then the
asymptotic distribution of the change point, in the random design setting, is given by the maximizer of a two-sided compound Poisson process (for details see Chapter 14 of the book by \citet{kosorok2008}).  As previously mentioned and will be established rigorously next, in the panel data setting analogous regimes emerge, with the modification that in the case of (ii) since we are dealing with a fixed design (equispaced time points), the limit process becomes a two-sided {\em generalized} random walk. In addition, there exists a third one (Regime (I)), where the asymptotic distribution of the change point becomes degenerate at the true value.
\newline
\indent Next, we introduce  assumptions needed to establish these results. As we deal with dependence across both time and panels, we have the following modified regimes --- (a) $\gamma_p^{-2}||\mu_1-\mu_2||_2^2 \to \infty$, (b) $\gamma_p^{-2}||\mu_1-\mu_2||_2^2 \to 0$ and (c) $\gamma_p^{-2}||\mu_1-\mu_2||_2^2 \to c>0$. If $\gamma_p = O(1)$, then (a)-(c) coincide with (I)-(III) above.  In Regime (a),  the asymptotic distribution of the change point can be derived under the same assumptions as in Theorem \ref{thm: conrate}. On the other hand, in the second and third regimes,  a non-degenerate limit distribution can be obtained under the following additional  assumptions.  Detailed comments on these assumptions will be provided after stating the results. 
\vskip 5pt
\noindent \textbf{Regime (b): $\gamma_p^{-2}||\mu_1 - \mu_2||_2 \to 0$, assumptions}.
For all $h_1,h_2 \in \mathbb{R}$,  define
\begin{eqnarray}
\Lambda_{h_1,h_2} &=& \sum_{\stackrel{t_i = \bigg[\frac{\gamma_p^2 h_i}{||\mu_1-\mu_2||_2^2}\bigg]\wedge 0 +1}{i=1,2}}^{\bigg[\frac{\gamma_p^2 h_i}{||\mu_1-\mu_2||_2^2}\bigg]\vee 0} \Gamma_{t_2-t_1},\ \ \  
\sigma_{h_1,h_2} = \lim_p \gamma_p^{-4} (\mu_1-\mu_2)^\prime \Lambda_{h_1,h_2} (\mu_1-\mu_2).\label{eqn: sigma}
\end{eqnarray}
We  require, \vskip 2pt
\noindent \textbf{(A1)} $\sigma_{h_1,h_2} $ exists  for all  $h_1,h_2 \in \mathbb{R}$. 
\vskip 5pt
\noindent \textbf{(A2)} $((\sigma_{h_i,h_j}  ))_{1 \leq i,j \leq r}$ is positive definite for all $h_1,h_2,\ldots,h_r \in \mathbb{R}$ and $r \geq 1$.
\vskip 5pt
\noindent \textbf{(A3)} $\frac{\sup_{k} |\mu_{1k}-\mu_{2k}|(1-I(A_j=0\ \forall j \geq 1)) }{||\mu_1-\mu_2||_2} = o(1)$. 
\vskip 2pt
\noindent 
If we have independence across time and panels -i.e. $A_j =0\ \forall j \geq 1$- condition (A3) is satisfied automatically.  
If we have independence across both time and panels, then further simplification of $\sigma_{h_1,h_2}$ is possible. For more details, see Sections \ref{cor: panelind} and \ref{cor: paneltimeind}. 
\vskip 5pt
\noindent \textbf{Regime (c): $\gamma_p^{-2}||\mu_1-\mu_2||_2 \to c>0$, assumptions:}.
\noindent 
Consider the following disjoint and exhaustive subsets of $\{1,2,3,\ldots, p(n)\}$:
\begin{align} \label{eqn: partition}
\mathcal{K}_0 &= \{k:\ 1 \leq k \leq p(n),\ \lim (\mu_{1k} - \mu_{2k}) \neq 0\} \ \ \text{and} \\
\mathcal{K}_n &= \mathcal{K}_0^c = \{k:\ 1 \leq k \leq p(n),\ \lim (\mu_{1k} - \mu_{2k}) = 0\},\ \forall n \geq 1. \nonumber
\end{align}
We make the following assumptions. 
\vskip 3pt
\noindent \textbf{(A4)}  $\mathcal{K}_0$ does not vary with $n$.  
\vskip 3pt
\noindent \textbf{(A5)} For some $\tau^{*} \in (c^{*},1-c^{*})$, $\tau_n \to \tau^{*}$. 

\vskip 3pt
\noindent Define, 
\begin{eqnarray}
c_1 &=& \lim \gamma_p^{-2} \sum_{k \in \mathcal{K}_n}  (\mu_{1k} - \mu_{2k})^2, \label{eqn: tildesigma} \\
\tilde{\sigma}_{(0,0),(t_1,t_2)} &=& \lim \gamma_p^{-4} \sum_{k_1,k_2 \in \mathcal{K}_n} (\mu_{1k_1} - \mu_{2k_1}) (\mu_{1k_2}-\mu_{2k_2}) \Gamma_{t_2-t_1}(k_1,k_2), \nonumber \\
\tilde{\sigma}_{(k,0),(t_1,t_2)} &=&  \lim \gamma_p^{-3} \sum_{k_1 \in \mathcal{K}_n} (\mu_{1k_1} - \mu_{2k_1}) \Gamma_{t_2-t_1}(k,k_1), \nonumber \\
\tilde{\sigma}_{(k_1,k_2),(t_1,t_2)} &=&  \lim \gamma_p^{-2}\Gamma_{t_2-t_1}(k_1,k_2). \nonumber
\end{eqnarray}
\noindent \textbf{(A6)} $c_1$, $\tilde{\sigma}_{(k_1,k_2),(t_1,t_2)}$ exists for all $t_1, t_2 \in \mathbb{Z}$ and $k_1,k_2 \in \mathcal{K}_0\cup\{0\}$. 
\vskip 5pt
\noindent \textbf{(A7)}  $((\tilde{\sigma}_{(k_i,k_j),(t_i,t_j)}))_{1\leq i,j \leq r}$ is positive definite  for all $k_1,k_2,\ldots,k_r \in \mathcal{K}_0\cup\{0\}$, $t_1,t_2,\ldots,t_r \in \mathbb{Z}$ and $r \geq 1$. 
\vskip 5pt
\noindent \textbf{(A8)} $\displaystyle \frac{\sup_{k \in \mathcal{K}_0} \bigg[\sup_{i} \sum_{j=0}^{\infty}|A_{j}(k,i)|^3 \bigg]^{1/3}}{\inf_{k \in \mathcal{K}_0} \bigg[ \inf_{i} \sum_{j=0}^{\infty}|A_{j}(k,i)|^2 \bigg]^{1/2}} = o(p^{1/6})$ and $\gamma_p^{-1}\mu_{ik} \to \mu_{ik}^{*}$ $\forall k \in \mathcal{K}_0$,  $i=1,2$.
\vskip 5pt
\noindent \textbf{(A9)}  
$\sup_{k \in \mathcal{K}_n} \gamma_p^{-1}|\mu_{1k}-\mu_{2k}| \to 0$.
\vskip 2pt
\vskip 2pt


\noindent Given the previously posited assumptions, we next state the following theorem that describes the limiting distribution of $\hat{\tau}_{n}$, whose proof is given in Section \ref{subsec: asympdist}. 

\begin{theorem} \label{thm: asympdist}
Suppose (SNR) holds. 
\vskip 2pt
\noindent (a)  If $\gamma_p^{-2}||\mu_1 -\mu_2||_2^2 \to \infty$, then $\stackrel{\lim}{n\to \infty} P(\hat{\tau}_n = \tau_n)=1$. 
\vskip 5pt
\noindent (b) If $\gamma_p^{-2}||\mu_1 -\mu_2||_2^2 \to 0$ and (A1), (A2) and (A3) hold, then
for all $h_1,h_2,\ldots,h_r \in \mathbb{R}$ and $r \geq 1$,
\begin{eqnarray}
n \gamma_p^{-2}||\mu_1-\mu_2||_2^2 (\hat{\tau}_n - \tau_n) \stackrel{\mathcal{D}}{\to} \arg \max_{h \in \mathbb{R}} (-0.5|h| + B_h^*)\ \ \text{where} \nonumber \\
(B_{h_1}^*,B_{h_2}^*,\ldots,B_{h_r}^*) \sim \mathcal{N}_{r}(0,\Sigma),\ \ \Sigma = ((\sigma_{h_ih_j}))_{1 \leq i,j \leq r} \,,\nonumber
 \end{eqnarray}
 { where $B_h^{\star}$ is a tight Gaussian process on $\mathbb{R}$ with continuous sample paths.} 
 \newline
\noindent (c) If $\gamma_p^{-2} ||\mu_1-\mu_2||_2^2 \to c>0$ and (A4)-(A9) hold, then
\begin{eqnarray}
n(\hat{\tau}_n-\tau_n) \stackrel{\mathcal{D}}{\to} \arg\max_{h \in \mathbb{Z}} (-0.5c_1 |h| +  \sum_{t=0\wedge h}^{0 \vee h} (W_t^* + A_t^*)) \nonumber
\end{eqnarray}
where for each $t_1,t_2, \ldots,t_r \in \mathbb{Z}$, $k_1,k_2,\ldots,k_r \in \mathcal{K}_0$  and $r \geq 1$
\begin{eqnarray}
&&(W_{t_1}^{*},W_{t_2}^{*},\ldots, W_{t_r}^{*}) \sim \mathcal{N}_r (0, ((\tilde{\sigma}_{(0,0),(t_i,t_j)}))_{1 \leq i,j \leq r} ),\ \ \
\nonumber \\
&& A_t^* = \frac{1}{2}\sum_{k \in \mathcal{K}_0} \bigg[ (X_{kt}^{*} +b_{kt}-\mu_{2k}^*)^2-(X_{kt}^* +b_{kt}-\mu_{1k}^*)^2 \bigg],\ \  \text{Cov}(X_{k_1 t_1}^*, W_{t_2}^*) =\tilde{\sigma}_{(k_1,0),(t_1,t_2)},  \nonumber \\
&& (X_{k_1t_1}^{*},X_{k_2t_2}^{*},\ldots, X_{k_r t_r}^{*})  \sim \mathcal{N}_r ((b_{k_1t_1},b_{k_2t_2},\ldots, b_{k_r t_r}),((\tilde{\sigma}_{(k_i,k_j),(t_i,t_j)}))_{1 \leq i,j \leq r} ),   \nonumber \\
&& \hspace{0 cm} b_{k_i,t_i} = \begin{cases} \mu_{1k_i}^{*}\ \ \text{if $t_i \leq n\tau^{*}$} \nonumber \\
\mu_{2k_i}^{*}\ \ \ \text{if $t_i > n\tau^{*}$}
\end{cases} \forall 1 \leq i \leq r.
\end{eqnarray}
\end{theorem}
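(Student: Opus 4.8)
The plan is to recast the least--squares minimization as the maximization of a localized objective process and then to invoke an argmax continuous mapping theorem. Since $L(b)=\tfrac{1}{n}\sum_{k,t}X_{kt}^2-U(b)$ with $U(b):=\sum_{k=1}^{p}\big[b\,\hat\mu_{1k}(b)^2+(1-b)\hat\mu_{2k}(b)^2\big]$, we have $\hat\tau_n=\arg\max_b U(b)$. Part (a) is then immediate from Theorem~\ref{thm: conrate}: when $\gamma_p^{-2}||\mu_1-\mu_2||_2^2\to\infty$, the bound $n\gamma_p^{-2}||\mu_1-\mu_2||_2^2\,|\hat\tau_n-\tau_n|=O_P(1)$ forces $n|\hat\tau_n-\tau_n|=o_P(1)$, which, since $U$ changes value only when $nb$ crosses an integer, means $[n\hat\tau_n]=[n\tau_n]$ with probability tending to $1$. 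For (b) and (c) we localize: set $b=\tau_n+h\,r_n$ with $r_n=\gamma_p^2/(n||\mu_1-\mu_2||_2^2)$ and $h\in\mathbb{R}$ in regime (b), and $b=\tau_n+h/n$ with $h\in\mathbb{Z}$ in regime (c). Put $V_n(h):=\tfrac{1}{2}n\gamma_p^{-2}\big(U(b)-U(\tau_n)\big)$ with $b$ as above; its maximizer over $h$ is $\hat h_n$, equal to $(\hat\tau_n-\tau_n)/r_n$ (resp.\ $n(\hat\tau_n-\tau_n)$) on the event that $\hat\tau_n$ lies in the localized range, which has probability tending to $1$ by Theorem~\ref{thm: conrate}. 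Hence it suffices to show that $V_n$ converges weakly on compact $h$--sets to a limit $V$ and that $V$ has an almost surely unique, almost surely finite maximizer, and then apply the argmax continuous mapping theorem (see, e.g., \citet{kosorok2008}).

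Two reductions drive the computation. First, since $b$ is within $O(r_n)$ (resp.\ $O(1/n)$) of $\tau_n$, the segments $[1,nb]$ and $[1,n\tau_n]$ differ by only $m:=|nb-n\tau_n|$ of order $n$ indices, so $\hat\mu_{1k}(b),\hat\mu_{2k}(b)$ may be replaced by $\hat\mu_{1k}(\tau_n),\hat\mu_{2k}(\tau_n)$ (and, for the drift, by $\mu_{1k},\mu_{2k}$) up to an error negligible after normalization; uniform control of the moving--average tails of $\varepsilon_t$ across the localized windows is part of this step. Second, $U$ is additive over coordinates and $\hat\mu_{jk}(b)$ depends only on coordinate $k$, so after the substitution $V_n(h)$ splits, over the $m$ misclassified time points between $n\tau_n$ and $nb$, into a deterministic drift (the sum over $k$ of the per--coordinate pooling losses) and a stochastic term. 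Computing $E[U(b)-U(\tau_n)]$, which reduces per coordinate to the residual--sum--of--squares inflation $\tfrac{n_1 m}{n_1+m}(\mu_{1k}-\mu_{2k})^2=(1+o(1))\,m\,(\mu_{1k}-\mu_{2k})^2$ caused by pooling a window of $m$ observations with the block of the other regime, shows $E[U(b)-U(\tau_n)]=-\tfrac{1}{n}(1+o(1))\,m\,||\mu_1-\mu_2||_2^2$, so the drift $E[V_n(h)]=-\tfrac{1}{2}\gamma_p^{-2}(1+o(1))\,m\,||\mu_1-\mu_2||_2^2$ converges to $-\tfrac{1}{2}|h|$ in regime (b), where $m=|h|\gamma_p^2/||\mu_1-\mu_2||_2^2\to\infty$. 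In regime (c), $m=|h|$ is bounded and there is no averaging: the coordinates $k\in\mathcal{K}_n$ with vanishing jump still contribute a deterministic $-\tfrac{1}{2}c_1|h|$ (using (A4), (A5), (A9) and the definition of $c_1$), while the finitely active coordinates $k\in\mathcal{K}_0$ contribute, through the $O(1)$ reshuffled squared residuals $(X_{kt}-\hat\mu_{2k}(\tau_n))^2-(X_{kt}-\hat\mu_{1k}(\tau_n))^2$ together with $\hat\mu_{jk}(\tau_n)\to\mu_{jk}$ and the limits $\gamma_p^{-1}\mu_{ik}\to\mu_{ik}^*$ of (A8), the generalized random walk increments $A_t^*$ after rescaling by $\gamma_p^{-2}$.

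The heart of the argument is the weak convergence of the stochastic part. In regime (b) it equals, up to sign and negligible terms, $S_n(h):=\gamma_p^{-2}(\mu_1-\mu_2)^{\prime}\sum_{t\in W(h)}\varepsilon_t$, with $W(h)$ the window of misclassified times. Writing $\varepsilon_t=\sum_{j\ge0}A_j\eta_{t-j}$ and re--indexing over the i.i.d.\ innovations expresses $S_n(h)$ as a weighted linear combination of the $\eta$'s; using $\sum_s A_{t_1-s}A_{t_2-s}^{\prime}=\Gamma_{t_2-t_1}$ one finds the covariance of $S_n(h_i)$ and $S_n(h_j)$ equals $\gamma_p^{-4}(\mu_1-\mu_2)^{\prime}\Lambda_{h_i,h_j}(\mu_1-\mu_2)\to\sigma_{h_i,h_j}$, the cross--sectional dependence entering precisely here through the off--diagonal entries of the $\Gamma_h$; so (A1) supplies the limiting covariances, (A2) makes them a genuine positive--definite Gaussian law, and joint asymptotic normality follows from a Lindeberg central limit theorem, the Lindeberg ratio vanishing by the finite fourth moments of the $\eta$'s together with (A3), which prevents any single coordinate of $\mu_1-\mu_2$, hence any single innovation, from dominating. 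This identifies the finite--dimensional limits with those of $-\tfrac{1}{2}|h|+B_h^*$. Tightness of $V_n$ on a compact $h$--set follows from a second--moment increment bound $E|S_n(h_2)-S_n(h_1)|^2\lesssim|h_2-h_1|$ on the $r_n$--grid, upgraded to a maximal inequality, so $V_n$ converges weakly in $\ell^\infty$ on compacts to $V(h)=-\tfrac{1}{2}|h|+B_h^*$, with $B^*$ a tight continuous Gaussian process; the negative drift and the $O_P(1)$ bound confine the maximizer to a compact and make it almost surely finite and almost surely unique, giving (b). Regime (c) is analogous with $h$ ranging over $\mathbb{Z}$, so tightness is automatic: a Lindeberg central limit theorem (using (A6), (A7) for existence and positive definiteness of the limiting covariances and (A9) for negligibility of individual $\mathcal{K}_n$--coordinates) handles the $\mathcal{K}_n$--aggregated stochastic part, yielding $(W_t^*)$; a Lyapunov central limit theorem for the individual $\mathcal{K}_0$--coordinate noises $\gamma_p^{-1}\varepsilon_{kt}$, whose Lyapunov ratio is controlled by the displayed $o(p^{1/6})$ quantity in (A8), yields the Gaussian vectors $(X_{kt}^*)$ with means $b_{kt}$; checking the joint limit produces the stated cross--covariances $\tilde\sigma_{(k,0),(t_1,t_2)}$ and $\tilde\sigma_{(k_1,k_2),(t_1,t_2)}$; and the limit $-\tfrac{1}{2}c_1|h|+\sum_{t=0\wedge h}^{0\vee h}(W_t^*+A_t^*)$ is a two--sided random walk with strictly negative drift, hence with an almost surely finite maximizer that is almost surely unique because its increments have absolutely continuous laws, so the discrete argmax mapping applies and gives (c).

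I expect the main obstacle to be this weak--convergence step under cross--sectional dependence: extracting the correct limiting covariance kernel $\sigma_{h_1,h_2}$ in regime (b), and the full block kernel with its cross--terms in regime (c), from the contracted autocovariance matrices $(\mu_1-\mu_2)^{\prime}\Gamma_h(\mu_1-\mu_2)$, while simultaneously verifying the Lindeberg and Lyapunov conditions that bound the influence of individual coordinates and innovations --- precisely where (A1)--(A3) (regime (b)) and (A6)--(A9) (regime (c)) are consumed. A secondary, more routine difficulty is the uniform control of the replacement of $\hat\mu_{jk}(b)$ by $\hat\mu_{jk}(\tau_n)$, and of the moving--average tails of $\varepsilon_t$, across the shrinking localized windows.
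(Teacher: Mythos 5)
Your proposal is correct and follows essentially the same route as the paper: localize the criterion at scale $\gamma_p^2/(n\|\mu_1-\mu_2\|_2^2)$ (resp. $1/n$), reduce the rescaled process to a deterministic triangular drift plus the contracted-noise term $\gamma_p^{-2}(\mu_1-\mu_2)'\sum_{t\in W(h)}\varepsilon_t$ (with the $\mathcal{K}_0$/$\mathcal{K}_n$ split in regime (c)), verify the finite-dimensional CLTs under (A1)--(A3) and (A4)--(A9), and conclude via the rate theorem plus the argmax continuous mapping lemma. Your use of the between-sums-of-squares functional $U(b)$ and the pooling-loss identity is only a cosmetic repackaging of the paper's explicit $A_k$, $B_k$, $D_{1k},\ldots,D_{7k}$ decomposition, where the paper likewise shows all terms but the drift ($D_{2k}$) and the linear noise term ($D_{5k}$) are uniformly negligible.
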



\noindent \textbf{Discussion of Theorem \ref{thm: asympdist}}.  We now provide detailed comments on the assumptions and how they relate to the three regimes established in Theorem \ref{thm: asympdist}.
In the first regime, the signal for $\hat{\tau}_{n}$ is high and therefore the difference
$(\hat{\tau}_{n} - \tau_n)$ becomes a point
mass at $0$. On the other hand, in the second and third regimes,  the total signal is weak and moderate, respectively, and a non-degenerate limit distribution can be obtained under additional assumptions.
\newline
\indent
Under the last two regimes, the results are based on the weak convergence of the process 
\begin{eqnarray} 
M_{n} (h) := n(L_{}(\tau_n + n^{-1}\gamma_p^{-2}||\mu_1-\mu_2||_2^{-2}h)-L_{}(\tau_n)),  \nonumber \\
h \in ||\mu_1-\mu_2||_2^{2}\{-(n-1), -(n-2),\ldots, -1,0,1,\ldots, (n-2), (n-1)\}. \hspace{1 cm} \label{eqn: normalizationlse}
\end{eqnarray}
Under  appropriate conditions (as mentioned in Theorem \ref{thm: asympdist}),  $\arg \max_{h} M_{n}(h) = n \gamma_p^{-2}||\mu_1 - \mu_2||_2^2 (\hat{\tau}_{n}-\tau_n)$  converges weakly to the unique maximizer of the limiting process. For more details see Lemma \ref{lem: wvandis1}.  

\noindent \textbf{Regime (b): $\gamma_p^{-2}||\mu_1-\mu_2||_2 \to 0$}. In the second regime, the asymptotic covariance of $(M_{n} (h_1),M_{n} (h_2),\ldots,M_{n} (h_r))$ is proportional to $(( \sigma_{h_i h_j}))_{1 \leq i,j \leq r}$ for all $h_1,h_2,\ldots,h_r \in \mathbb{R}$, $r \geq 1$ and hence the need for assumption (A1). Discussion of  conditions under which (A1) is satisfied are given in Proposition \ref{prop: 1}.  
Assumption (A2) is required for establishing the non-degeneracy of the asymptotic distribution. Finally, the asymptotic normality of $M_n(h)$ requires (A3). If $A_j = 0\ \forall j \geq 1$ -i.e. if we have independence across time- then (A3) is satisfied automatically. 

\noindent \textbf{Regime (c): $\gamma_p^{-2}||\mu_1-\mu_2||_2 \to c>0$}. Under the third regime, the limiting process has two components based on the partition of  $\{1,2,\ldots, p(n)\}$ into sets $\mathcal{K}_0$ and $\mathcal{K}_n = \mathcal{K}_0^c$, defined in (\ref{eqn: partition}).   
Observe that $\mathcal{K}_n$ is the collection of all such indices whose corresponding variables eventually have the same mean before and after the change point. In the second regime, $\mathcal{K}_0$ is the empty set. On the other hand, under the third regime,  $\mathcal{K}_0$ may not be empty,  but can be at most a finite set. Hence, $\mathcal{K}_n$ is necessarily an infinite set. 

These two sets in the partition contribute differently to the limit. Let $L(b) = \frac{1}{n}\sum_{k=1}^{p} L_{k}(b)$,
\begin{eqnarray}
M_{k,n}^{*}(h) = (L_{k}(\tau_n + n^{-1}h)-L_{k}(\tau_n)),\ \ \   
h \in \{-(n-1), \ldots, -1,0,1,\ldots, (n-2), (n-1)\}.  \nonumber
\end{eqnarray}
  Note that
\begin{eqnarray}
M_{n}^{*}(h) = \sum_{k \in \mathcal{K}_n} M_{k,n}^{*}(h) + \sum_{k \in \mathcal{K}_0} M_{k,n}^{*}(h) =: M_n^{I}(h) + M_{n}^{II}(h),\ \text{say}.
\end{eqnarray}

\noindent \textbf{Limit of $\mathcal{K}_0$}: Conditions (A4) and (A5)  are required to establish the limit of the random part $M_{n}^{II}(h)$ involving  $\{X_{kt}: k \in \mathcal{K}_0, 1 \leq t \leq n\}$.   The asymptotic covariance between $X_{k_1t_1}$ and $X_{k_2t_2}$ is given by $\tilde{\sigma}_{(k_1,k_2),(t_1,t_2)}$ for all $k_1,k_2 \in \mathcal{K}_0$ and $t_1,t_2 \in \mathbb{Z}$. Existence of these covariances and non-degeneracy of the corresponding asymptotic distributions are guaranteed by (A6) and (A7) for $k_1,k_2 \in \mathcal{K}_0$ and $t_1,t_2 \in \mathbb{Z}$. Condition (A8)  is a technical condition and is required for establishing the asymptotic normality of $X_{kt}$ for all $t \in \mathbb{Z}$ and $k \in \mathcal{K}_0$. As $\mathcal{K}_0$ is a finite set, by (A4)-(A8),  $M_{n}^{II}(h)$ converges weakly to the process $\sum_{t=h\wedge 0}^{h \vee 0} A_t^*$, described in Theorem \ref{thm: asympdist}(c). 
\vskip 2pt
\noindent \textbf{Limit of $\mathcal{K}_n$}: The limit of the random part $M_{n}^{I}(h)$ involving  $\{X_{kt}: k \in \mathcal{K}_n, 1 \leq t \leq n\}$ is an appropriately scaled  Gaussian process  on $\mathbb{Z}$ with a triangular drift, as given by $-0.5c_1 |h| +  \sum_{t=0\wedge h}^{0 \vee h} W_t^*$ in Theorem \ref{thm: asympdist}(c)  and can be established using similar arguments given in Regime (b).   Analogous to Assumptions (A1), (A2) and (A3) in Regime (b) are conditions (A6),  (A7) for $k_1,k_2 = 0$ and $t_1,t_2 \in \mathbb{Z}$ and (A9). Discussion of  conditions under which Assumption (A6) is satisfied are given in Proposition \ref{prop: 2}.
\vskip 2pt
\noindent \textbf{Dependence between  $\mathcal{K}_0$ and $\mathcal{K}_n$}: Moreover, the limits coming from $\{X_{kt}: k \in \mathcal{K}_0, 1 \leq t \leq n\}$ and $\{X_{kt}: k \in \mathcal{K}_n, 1 \leq t \leq n\}$ are correlated and their covariances are given by $\tilde{\sigma}_{(k,0),(t_1,t_2)}$, where $k \in \mathcal{K}_0$ and $t_1,t_2\in \mathbb{Z}$. Thus, we require Assumptions (A6) and (A7) for all  $t_1, t_2 \in \mathbb{Z}$ and $k_1,k_2 \in \mathcal{K}_0\cup\{0\}$. 
\vskip 2pt
\indent (A9) is a technical assumption. Following the proof of Theorem \ref{thm: asympdist}(c),  at some point we need to establish  the  asymptotic normality of 
\begin{align}
\sum_{k \in \mathcal{K}_n} (\mu_{1k}-\mu_{2k})(X_{kt} - E(X_{kt}))\ \forall t \geq 1. \label{eqn: asymnorlse}
\end{align}
Note that for $t \geq 1$, $\{(\mu_{1k}-\mu_{2k})(X_{kt} - E(X_{kt})): k \in \mathcal{K}_n\}$ is a collection of infinitely many  centered random variables. To apply Lyapunov's central limit theorem to (\ref{eqn: asymnorlse}),  we require
\begin{align}
\frac{\gamma_p^{-6}\sum_{k_1,k_2,k_3 \in \mathcal{K}_n} \left(\prod_{i=1,2,3} |\mu_{1k_i}-\mu_{2k_i}|\right) E\left(\prod_{i=1,2,3} |X_{k_i t} - E(X_{k_i t})| \right)}{\gamma_p^{-4}\sum_{k_1,k_2 \in \mathcal{K}_n}(\mu_{1k_1}-\mu_{2k_1}) (\mu_{1k_2}-\mu_{2k_2}) \Gamma_{0}(k_1,k_2) }  \to 0.  \label{eqn: lyaplse}
\end{align}
By (A6) and (A7), the left side of (\ref{eqn: lyaplse}) is dominated by
\begin{eqnarray}
&& C\gamma_p^{-3} (\sup_{k \in \mathcal{K}_n} |\mu_{1k}-\mu_{2k}| )^3 \gamma_p^{-3} \sum_{k_1,k_2,k_3 \in \mathcal{K}_n}  E\left(\prod_{i=1,2,3} |X_{k_i t} - E(X_{k_i t})| \right) \nonumber \\
& \leq & C \gamma_p^{-3} (\sup_{k \in \mathcal{K}_n} |\mu_{1k}-\mu_{2k}| )^3 \gamma_p^{-3} \sum_{k_1,k_2,k_3 \in \mathcal{K}_n} \sum_{j_1,j_2,j_3=0}^{\infty} \sum_{i_1,i_2,i_3=1}^{p} |A_j(k_1,i_1)| \nonumber \\
& \leq & C \gamma_p^{-3} (\sup_{k \in \mathcal{K}_n} |\mu_{1k}-\mu_{2k}| )^3  \label{eqn: lyaplsedom}
\end{eqnarray}
for some $C>0$.  (A9) is a natural sufficient condition for (\ref{eqn: lyaplsedom}) to converge to $0$.  Similarly we need (A8) for the asymptotic normality of $X_{kt}-EX_{kt}$ for all $k \in \mathcal{K}_0$ and $t \in \mathbb{Z}$.

\begin{remark} \label{rem: baigammadist}
Similarly to Remark \ref{rem: baigammamain}, if we have cross-sectional independence, i.e. $A_{j,p}= \text{Daig}\{a_{j,k}:\ 1 \leq k \leq p\}$, then $\gamma_p$ can be replaced by $\tilde{\gamma}_p := \sup_{1\leq k \leq p} \sum_{j=0}^{\infty}|a_{j,k}|$ throughout Theorem \ref{thm: asympdist} and we require the weaker (SNR$^\prime$) $\frac{n\tilde{\gamma}_p^{-2}}{p} ||\mu_1-\mu_2||_2^2 \to \infty$ instead of (SNR). 
\end{remark}

\noindent The asymptotic distributions obtained in Theorem \ref{thm: asympdist} can be simplified further for some special cases such as independence across panels and independence across both panels and time. 

\vskip 2pt
\noindent \textbf{Independence across panels}: Then, for all $t_1,t_2 \in \mathbb{Z}$, $\tilde{\sigma}_{(k,0),(t_1,t_2)} =0\ \forall k \in \mathcal{K}_0$ and $\tilde{\sigma}_{(k_1,k_2),(t_1,t_2)} = 0\ \forall k_1\neq k_2,\  k_1,k_2 \in \mathcal{K}_0$. This implies that the limits of $\{X_{kt}:  1 \leq t \leq n\}$ are independent across $k \in \mathcal{K}_0$. Moreover, the limits coming from $\{X_{kt}: k \in \mathcal{K}_0, 1 \leq t \leq n\}$ and $\{X_{kt}: k \in \mathcal{K}_n, 1 \leq t \leq n\}$ are also independent. For more details, see Section \ref{cor: panelind}. 
\vskip 2pt
\noindent \textbf{Independence across time and panels}: Then, the population autocovariance matrix $\Gamma_u = 0\ \forall u \neq 0$ and consequently $\tilde{\sigma}_{(k_1,k_2),(t_1,t_2)} \neq 0$, only when $t_1=t_2 \in \mathbb{Z}$ and $k_1 = k_2 \in \mathcal{K}_0 \cup \{0\}$. More details are given in Section \ref{cor: paneltimeind}.

\vskip 10pt
\noindent \textbf{Sufficient conditions for (A1) and (A6)}. Next, we further elaborate on Assumptions (A1) and (A6) that ensure the existence of certain limits,  which are not trivially satisfied.  We need some restrictions on the means $\mu_1$ and $\mu_2$ and on the coefficient matrices $\{A_{j,p}:\ j \geq 0\}$ in the panel data model (\ref{eqn:model}) for satisfying (A1) and (A6). Propositions \ref{prop: 1} and \ref{prop: 2}, given below, provide sufficient conditions for (A1) and (A6) to hold.  
Their proofs are  given in Section \ref{subsec: prop}. 

Define for any  sequence of matrices $\{M_p = ((m_{ij}))_{p \times p}\}$,
\begin{eqnarray*}
||M||_{(1,1)}& =& \max_{1 \leq j \leq p} \sum_{i= 1}^{p} |m_{ij}|\ \ \text{and}\ \ \beta_p = \sum_{j=0}^{\infty} ||A_j||_{(1,1)}.
\end{eqnarray*}

\begin{prop} \label{prop: 1}
Suppose the following conditions hold.
\vskip 2pt
\noindent (a) $\frac{\gamma_{p(n+1)}}{\gamma_{p(n)}} \to 1$, (b) $\frac{\beta_{p(n+1)}}{\beta_{p(n)}} \to 1$,  (c) $\frac{p(n+1)}{p(n)} \to 1$,
\vskip 2pt
\noindent (d)  $\frac{||\mu_{1,p(n)}-\mu_{2,p(n)} ||_2^2}{||\mu_{1,p(n+1)}-\mu_{2,p(n+1)} ||_2^2 } \to 1$,
\vskip 2pt
\noindent (e) $\gamma_{p(n)}^{-2}\beta_{p(n)}^2 p(n) \frac{\displaystyle{\sup_{i=1,2}\sup_{1 \leq k \leq p(n)} |\mu_{ik,p(n+1)}-\mu_{ik,p(n)}|^2}}{||\mu_{1,p(n)}-\mu_{2,p(n)} ||_2^2} \to 0$,
\vskip 2pt
\noindent (f) $\gamma_{p(n)}^{-2}\beta_{p(n)}^2 p(n) \frac{\displaystyle{\sup_{1 \leq k \leq p(n)} |\mu_{1k,p(n)}-\mu_{2k,p(n)}|^2}}{||\mu_{1,p(n)}-\mu_{2,p(n)} ||_2^2} \to 0$.
\vskip 2pt
\noindent Then, under $\gamma_{p(n)}^{-2} ||\mu_{1,p(n)}-\mu_{2,p(n)} ||_2^2 \to 0$,  $\sigma_{h_1,h_2}$ exists for all $h_1,h_2 \in \mathbb{R}$. 
\end{prop}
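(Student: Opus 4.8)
The plan is to reduce the claim to the convergence of an explicit scalar sequence, recast that sequence as a Riemann‑type integral against finite measures $\nu_n$, and then show the $\nu_n$ converge weakly, exploiting the ``slow variation in $n$'' hypotheses (a)--(f).

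\textbf{Reduction and boundedness.} Fix $h_1,h_2\in\mathbb{R}$, write $\mu^{(n)}:=\mu_{1,p(n)}-\mu_{2,p(n)}$ and $\gamma_n:=\gamma_{p(n)}$; it suffices that $a_n:=\gamma_n^{-4}(\mu^{(n)})^\prime\Lambda_{h_1,h_2}\mu^{(n)}$ converge. Using $\Gamma_u=\sum_{j\ge0}A_jA_{j+u}^\prime$ (with $A_k:=0$ for $k<0$), I would rewrite $a_n=\gamma_n^{-4}\sum_{u\in\mathbb{Z}}c_n(u)\,\phi_n(u)$, where $\phi_n(u):=(\mu^{(n)})^\prime\Gamma_u\mu^{(n)}$ and $c_n(u)$ is the number of lattice pairs $(t_1,t_2)$ in the defining ranges with $t_2-t_1=u$. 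Two elementary facts then yield boundedness: $0\le c_n(u)\le N_n$, where $N_n$ is the shorter range length, $N_n\asymp\gamma_n^2\min(|h_1|,|h_2|)\|\mu^{(n)}\|_2^{-2}\to\infty$ (using the proposition's standing hypothesis $\gamma_n^{-2}\|\mu^{(n)}\|_2^2\to0$); and $\sum_u|\phi_n(u)|\le\|\mu^{(n)}\|_2^2\sum_u\|\Gamma_u\|_2\le\gamma_n^2\|\mu^{(n)}\|_2^2$ (Cauchy--Schwarz applied to $\Gamma_u=\sum_jA_jA_{j+u}^\prime$). Hence $|a_n|\le C\min(|h_1|,|h_2|)$, so $\{a_n\}$ is bounded.

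\textbf{Riemann-sum form.} Since $c_n(u)$, as a function of $u$, is a trapezoid of height $N_n$ that rescales (in the variable $s=u\|\mu^{(n)}\|_2^2/\gamma_n^2$) to a fixed, compactly supported, piecewise-linear profile $\tilde c_{h_1,h_2}$ with $\tilde c_{h_1,h_2}(0)=\min(|h_1|,|h_2|)$, one gets $a_n=\int\tilde c_{h_1,h_2}\,d\nu_n+o(1)$, where $\nu_n$ is the finite measure placing mass $\gamma_n^{-2}\|\mu^{(n)}\|_2^{-2}\phi_n(u)$ at $u\|\mu^{(n)}\|_2^2/\gamma_n^2$ and the $o(1)$ absorbs the boundary of the trapezoid (of order $\gamma_n^{-4}\sum_u|u|\,|\phi_n(u)|$, which the hypotheses make negligible). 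By the identity $\sum_u\Gamma_u=B_pB_p^\prime$ with $B_p:=\sum_{j\ge0}A_j$, $\nu_n$ has total mass $r_{p(n)}:=\gamma_n^{-2}\|\mu^{(n)}\|_2^{-2}\|B_{p(n)}^\prime\mu^{(n)}\|_2^2\in[0,1]$. Thus convergence of $a_n$ follows once $\nu_n$ converges weakly; conditions (e) and (f)---which prevent any coordinate of $\mu_1-\mu_2$ from dominating and bound the change of the means between successive dimensions---are what make the tails of $\nu_n$ away from the origin negligible, so that (modulo the parallel, simpler treatment of $\mathrm{sgn}(h_1)\neq\mathrm{sgn}(h_2)$) everything collapses to convergence of the scalar ratios $r_{p(n)}$, after which $\sigma_{h_1,h_2}=\min(|h_1|,|h_2|)\lim_n r_{p(n)}$.

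\textbf{Convergence of $r_{p(n)}$ --- the main obstacle.} The remaining, and genuinely delicate, step is to show $\{r_{p(n)}\}$ converges. I would do this by estimating the increment $r_{p(n+1)}-r_{p(n)}$: couple the arrays at levels $p(n)$ and $p(n{+}1)$ on the common nested innovations $\{\eta_{t,\cdot}\}$ and split the change into (i) the rescalings $\gamma_{n+1}^{-2}/\gamma_n^{-2}$ and $\|\mu^{(n+1)}\|_2^2/\|\mu^{(n)}\|_2^2$, negligible by (a) and (d); (ii) the perturbation $B_{p(n+1)}\to B_{p(n)}$ of the aggregated coefficient matrix, controlled through $\|B_p\|_{(1,1)}\le\beta_p$ with (b) and (c), the latter also absorbing the $p(n{+}1)-p(n)=o(p(n))$ new coordinates; and (iii) the perturbation of the mean vector, where expanding $\|B_p^\prime\mu\|_2^2$ shows the change is of order $\beta_p^2\sqrt{p}\,\sup_k|\mu_{1k,p(n+1)}-\mu_{1k,p(n)}|\cdot\|\mu^{(n)}\|_2$, which after division by $\|\mu^{(n)}\|_2^2$ is killed exactly by (e). Each piece is $o(1)$, so $r_{p(n+1)}-r_{p(n)}\to0$; the crux is then to upgrade this to convergence of $\{r_{p(n)}\}$, which I expect to require sharpening the three estimates to a summable bound on $|r_{p(n+1)}-r_{p(n)}|$ (making the telescoping series of increments converge) or, equivalently, a direct Cauchy estimate for $|r_{p(m)}-r_{p(n)}|$ uniform in $m\ge n$. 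All remaining work is routine bookkeeping with $\|\cdot\|_2$ and $\|\cdot\|_{(1,1)}$ and the inequalities $\sum_u\|\Gamma_u\|_2\le\gamma_p^2$, $\|B_p\|_2\le\gamma_p$ and $\|B_p\|_{(1,1)}\le\beta_p$.
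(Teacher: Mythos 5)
There is a genuine gap, and it occurs at the step you single out as the payoff of your construction: the collapse of $a_n=\int\tilde c_{h_1,h_2}\,d\nu_n+o(1)$ to $\min(|h_1|,|h_2|)\,\lim_n r_{p(n)}$. Replacing the trapezoid by its height (equivalently, pretending $\nu_n$ concentrates at the origin) requires the error $\gamma_n^{-4}\sum_u\bigl(N_n-c_n(u)\bigr)|\phi_n(u)|$, essentially $\gamma_n^{-4}\sum_u\min(|u|,N_n)\,|\phi_n(u)|$, to vanish. Conditions (e) and (f) cannot deliver this: they constrain only the cross-sectional profile of the means and the growth of $\gamma_p,\beta_p,p$ across $n$, and say nothing about how fast $\|\Gamma_u\|_2$ decays in the lag $u$ relative to the scale $N_n\asymp\gamma_p^2\|\mu_1-\mu_2\|_2^{-2}$. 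Controlling that error in general needs something like $\sup_p\gamma_p^{-1}\sum_j j\|A_j\|_2<\infty$ — precisely the Bai-type summability condition the paper emphasizes it does \emph{not} assume (see the discussion around Corollary \ref{rem: bai}). If the temporal dependence persists over lags comparable to $N_n$ (which the MA($\infty$) framework permits, e.g.\ coefficients spread over $m(p)$ lags with $m(p)\asymp N_n$), the mass of $\nu_n$ does not localize at $0$, the limit — if it exists — need not equal $\min(|h_1|,|h_2|)\lim r_{p(n)}$, and your subsequent analysis of $r_{p(n)}$ is then analyzing the wrong object. So the reduction is not a simplification of the claim but an unproved strengthening of it.

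The second problem is one you flag yourself: even for $r_{p(n)}$ you only sketch $r_{p(n+1)}-r_{p(n)}\to0$ and correctly note that this does not yield convergence without a summable or genuinely Cauchy bound, which you do not supply; the proposal therefore stops exactly where the remaining work lies. For comparison, the paper's own argument (written out for Proposition \ref{prop: 2} and stated to adapt to Proposition \ref{prop: 1}) skips your measure-theoretic reduction entirely: it takes the quadratic form $\gamma_{p}^{-4}(\mu_{1}-\mu_{2})^\prime\Gamma_{\cdot}(\mu_{1}-\mu_{2})$ itself, splits $\mu_{i,p(n+1)}$ into $\mu_{i,p(n)}$, the increments $\mu_{i\cdot,p(n+1)}-\mu_{i\cdot,p(n)}$ and the new coordinates, and bounds the successive difference of the sequence by $\beta_p$-, $\gamma_p$- and $p$-terms that vanish under (a)–(f)/(g)–(h). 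So your route both inserts an extra step that fails under the stated hypotheses and ends at the same ``differences tend to zero'' point, left open; to repair it you would either have to add a temporal-decay assumption to justify the collapse, or abandon the reduction and work with the full double sum over $(t_1,t_2)$ as the paper does, in either case still producing a quantitative (summable or uniform Cauchy) bound rather than only $o(1)$ increments.
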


\begin{prop} \label{prop: 2}
(I) Suppose (a), (b), (c) in Proposition \ref{prop: 1} hold and  
\vskip 2pt
\noindent (g) $\gamma_{p(n)}^{-4}\beta_{p(n)}^2 p(n) \displaystyle{\sup_{i=1,2}\sup_{k \in \mathcal{K}_n} |\mu_{ik,p(n+1)}-\mu_{ik,p(n)}|^2} \to 0$,
\vskip 2pt
\noindent (h) $\gamma_{p(n)}^{-4}\beta_{p(n)}^2 p(n) \displaystyle{\sup_{k \in \mathcal{K}_n} |\mu_{1k,p(n)}-\mu_{2k,p(n)}|^2} \to 0$.
\vskip 2pt
\noindent Then, under $\gamma_{p(n)}^{-2} ||\mu_{1,p(n)}-\mu_{2,p(n)} ||_2^2 \to c> 0$,  $\tilde{\sigma}_{(0,0),(t_1,t_2)}$ exists for all $t_1,t_2 \in \mathbb{Z}$. 
\vskip 2pt
\noindent (II) Suppose (a), (b), (c) in Proposition \ref{prop: 1} hold and  
\vskip 2pt
\noindent (i) $\gamma_{p(n)}^{-4}\beta_{p(n)}^2 p(n) \displaystyle{\sup_{i=1,2}\sup_{ k \in \mathcal{K}_n} |\mu_{ik,p(n+1)}-\mu_{ik,p(n)}|} \to 0$,
\vskip 2pt
\noindent (j) $\gamma_{p(n)}^{-4}\beta_{p(n)}^2 p(n) \displaystyle{\sup_{k \in \mathcal{K}_n} |\mu_{1k,p(n)}-\mu_{2k,p(n)}|} \to 0$.
\vskip 2pt
\noindent Then, under $\gamma_{p(n)}^{-2} ||\mu_{1,p(n)}-\mu_{2,p(n)} ||_2^2 \to c> 0$,  $\tilde{\sigma}_{(k,0),(t_1,t_2)}$ exists for all $t_1,t_2 \in \mathbb{Z}$  and  $k \in \mathcal{K}_0$. 
\vskip 2pt
\noindent (III) Suppose (a),  (c) in Proposition \ref{prop: 1} hold. Then, under $\gamma_{p(n)}^{-2} ||\mu_{1,p(n)}-\mu_{2,p(n)} ||_2^2 \to c> 0$,  $\tilde{\sigma}_{(k_1,k_2),(t_1,t_2)}$ exists for all $t_1,t_2 \in \mathbb{Z}$  and  $k_1,k_2 \in \mathcal{K}_0$.
\end{prop}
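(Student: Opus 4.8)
\noindent\textbf{Proof strategy for Proposition~\ref{prop: 2}.}
Each of the three quantities $\tilde{\sigma}_{(0,0),(t_1,t_2)}$, $\tilde{\sigma}_{(k,0),(t_1,t_2)}$ and $\tilde{\sigma}_{(k_1,k_2),(t_1,t_2)}$ is, by definition, the limit as $n\to\infty$ of a sequence $D_n$ built from the dimension-$p(n)$ model, so the plan is to prove that each such sequence converges via a Cauchy argument: it suffices to show that $\sup_{m>n}|D_m-D_n|\to 0$, which we obtain by telescoping and bounding the increment incurred in passing from dimension $p(n)$ to $p(n+1)$. Writing $\Gamma_h=\sum_{j\ge 0}A_jA_{j+h}^\prime$ with $h=t_2-t_1$, and $\delta_k=(\mu_{1k}-\mu_{2k})I(k\in\mathcal{K}_n)$ for the (triangular-array) mean gap, we decompose the generic increment $D_{n+1}-D_n$ into three pieces: (i) a \emph{renormalization} piece, in which only the prefactor $\gamma_p^{-2}$, $\gamma_p^{-3}$ or $\gamma_p^{-4}$ is changed; (ii) a \emph{new-coordinate} piece, collecting the contribution of the indices $p(n)+1,\dots,p(n+1)$ that enter the summations over $k$, $k_1$, $k_2$ and the inner index $i$ of $\Gamma_h(\cdot,\cdot)=\sum_j\sum_i A_j(\cdot,i)A_{j+h}(\cdot,i)$; and (iii) a \emph{mean-perturbation} piece, accounting for the replacement of $\mu_{ik,p(n)}$ by $\mu_{ik,p(n+1)}$ at the indices $k\le p(n)$. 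Piece (iii) is absent in part (III), whose sequence contains no means; for parts (I) and (II) the identity $a^\prime b^\prime-ab=a^\prime(b^\prime-b)+(a^\prime-a)b$ reduces it to first-order increments $|\mu_{ik,p(n+1)}-\mu_{ik,p(n)}|$. (Here we use that $A_{j,p(n)}$ is the leading $p(n)\times p(n)$ block of $A_{j,p(n+1)}$, so that piece (ii) is a genuine tail in the indices; in general a minor variant of the same estimates applies.)

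The estimates that drive the argument are the absolute-summability bounds $|\Gamma_h(k_1,k_2)|\le\sum_j\|A_j\|_2\|A_{j+h}\|_2\le\gamma_p^2$ and its one- and two-index analogues, the crudest of which is $\sum_{k_1,k_2\le p}|\Gamma_h(k_1,k_2)|\le p\beta_p^2$, obtained by bounding each column $\ell_1$-norm of $A_j$ by $\|A_j\|_{(1,1)}$. For sums carrying mean weights we use the loading-vector identity $\sum_{k_1,k_2\in\mathcal{K}_n}\delta_{k_1}\delta_{k_2}\Gamma_h(k_1,k_2)=\sum_j\langle v_j,v_{j+h}\rangle$ with $v_j(i)=\sum_{k\in\mathcal{K}_n}\delta_k A_j(k,i)$, whence $v_j=A_j^\prime\delta$, $\|v_j\|_2\le\|\delta\|_2\|A_j\|_2$, and $|\sum_j\langle v_j,v_{j+h}\rangle|\le\|\delta\|_2^2\gamma_p^2$ (and similarly with a single mean weight in part (II)). Since $\mathcal{K}_0$ is finite with $\lim(\mu_{1k}-\mu_{2k})\neq 0$ for $k\in\mathcal{K}_0$, Regime (c) gives $\gamma_p^{-2}\|\delta\|_2^2\le\gamma_p^{-2}\|\mu_1-\mu_2\|_2^2\to c$, so all three sequences $D_n$ are bounded and it remains only to rule out oscillation.

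Piece (i) equals $|1-(\gamma_{p(n)}/\gamma_{p(n+1)})^r|$ times a bounded quantity and is $o(1)$ by condition (a) of Proposition~\ref{prop: 1}. Piece (ii), after division by the appropriate power of $\gamma_p$, is bounded by a multiple of $\gamma_p^{-4}\beta_p^2\,(p(n+1)-p(n))\,(\sup_{k\in\mathcal{K}_n}|\mu_{1k}-\mu_{2k}|)^{2}$ for part (I) --- with one power of the supremum for part (II), and with the mean-free bound $\gamma_p^{-2}\cdot o(p)$ for part (III) --- which is $o(1)$ because $p(n+1)-p(n)=o(p(n))$ and $\beta_{p(n+1)}/\beta_{p(n)}\to1$ by conditions (c) and (b), together with the level conditions (h) (part I) and (j) (part II), conditions (a) and (c) alone sufficing for part (III). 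Piece (iii), via the $a^\prime b^\prime-ab$ split and the bound $\sum_{k_1,k_2\le p}|\Gamma_h|\le p\beta_p^2$, is bounded by a multiple of $\gamma_p^{-4}\beta_p^2 p(n)\cdot\sup_{i,k}|\mu_{ik,p(n+1)}-\mu_{ik,p(n)}|\cdot\sup_{k\in\mathcal{K}_n}|\mu_{1k}-\mu_{2k}|$ in part (I) (one factor an increment, the other a level value), which tends to $0$ by Cauchy--Schwarz and conditions (g) and (h); in part (II) the single mean weight makes this piece first order in the increment, controlled directly by (i), while the matching ``level'' contribution from piece (ii) is controlled by (j). Summing the three pieces yields the asserted convergence, so parts (I), (II), (III) all run off a common template, (III) being the base case and (I)--(II) differing only in whether the mean gap enters quadratically or linearly.

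The step requiring real care is piece (ii): the mass carried by the newly added coordinates is \emph{not} small in isolation, and is rendered negligible only after the normalization is divided out and one exploits that the three ratios $p(n+1)/p(n)$, $\gamma_{p(n+1)}/\gamma_{p(n)}$ and $\beta_{p(n+1)}/\beta_{p(n)}$ all tend to $1$, so that ``added mass'' is of strictly smaller order than ``total mass.'' Making this quantitative forces a decomposition $v_j=\hat v_j+\tilde v_j$ of each loading vector into old- and new-coordinate parts, and a term-by-term analysis of $\langle\hat v_j,\hat v_{j+h}\rangle_{\text{new }i}$, $\langle\hat v_j,\tilde v_{j+h}\rangle$, $\langle\tilde v_j,\hat v_{j+h}\rangle$, $\langle\tilde v_j,\tilde v_{j+h}\rangle$, in each of which one must identify whether $\|\cdot\|_2$ or $\|\cdot\|_{(1,1)}$ gives the sharp bound; it is precisely the outcome of this bookkeeping that fixes the exponents of $\gamma_p$, $\beta_p$, $p$ and of the mean-gap supremum appearing in conditions (g)--(j). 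Getting pieces (i)--(iii) to close simultaneously with mutually consistent exponents --- and verifying that Regime (c) keeps the sequences bounded so that the Cauchy criterion is applicable --- is the crux of the argument.
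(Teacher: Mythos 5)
Your decomposition of the one-step increment $D_{n+1}-D_n$ into a renormalization piece (change of the $\gamma_p$ prefactor), a new-coordinate piece (indices $p(n)+1,\dots,p(n+1)$) and a mean-perturbation piece (first-order increments $|\mu_{ik,p(n+1)}-\mu_{ik,p(n)}|$), bounded via $|\Gamma_h(k_1,k_2)|\le\gamma_p^2$ and $\sum_{k_1,k_2\le p}|\Gamma_h(k_1,k_2)|\le p\beta_p^2$ and controlled by conditions (a)--(c) and (g)--(j) (with (a), (c) alone for part (III)), is essentially the paper's own proof of Proposition \ref{prop: 2}. The one caveat is that your opening claim that telescoping yields $\sup_{m>n}|D_m-D_n|\to 0$ is stronger than what the increment estimates deliver (they only give $|D_{n+1}-D_n|=o(1)$, not summability), but this is exactly the level at which the paper's own argument operates, so it is not a deviation from the published proof.
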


 
Proposition \ref{prop: 1} needs Conditions (a)-(f) for satisfying (A1).  Next we explain these conditions.  For a sequence $\{a_n\}$, the first order differences of $\{a_n\}$ grow slower than $\{a_n\}$ if $\frac{a_{n+1}}{a_n} \to 1$ holds. Conditions (a)-(d) in Proposition \ref{prop: 1} assume that the first order differences of sequences $\{\gamma_{p(n)}\}$, $\{\beta_{p(n)}\}$, $\{p(n)\}$ and $||\mu_{1,p(n)}-\mu_{2,p(n)}||_2$ grow  at a slower rate than these sequences respectively.  Conditions (e) and (f)  in Proposition \ref{prop: 1} respectively ensure that, uniformly over $i=1,2$ and $1 \leq k \leq p(n)$,  the first order differences of $\{\mu_{ik,p(n)}:\ n \geq 1\}$ and the sequence $\{\gamma_p^{-1}|\mu_{1k,p(n)}-\mu_{2k,p(n)}|:\ n \geq 1\}$ decay faster than $||\mu_1-\mu_2||_2 \gamma_p  (\beta_p\sqrt{p(n)})^{-1}$ and $||\mu_1-\mu_2||_2  (\beta_p\sqrt{p(n)})^{-1}$ respectively. 

In Proposition \ref{prop: 2}, we divide Assumption (A6) into three parts --- (I) $\tilde{\sigma}_{(0,0),(t_1,t_2)}$ exists for all $t_1,t_2 \in \mathbb{Z}$, (II) $\tilde{\sigma}_{(k,0),(t_1,t_2)}$ exists for all $t_1,t_2 \in \mathbb{Z}$  and  $k \in \mathcal{K}_0$ and (III) $\tilde{\sigma}_{(k_1,k_2),(t_1,t_2)}$ exists for all $t_1,t_2 \in \mathbb{Z}$  and  $k_1,k_2 \in \mathcal{K}_0$. For (I), Proposition \ref{prop: 2} needs  (a)-(c) given in Proposition \ref{prop: 1} and two additional Conditions (g) and (h) which are same as (e) and (f) respectively  except $1 \leq k \leq p(n)$ is replaced by $k \in \mathcal{K}_n$ and the rate of decay $||\mu_1-\mu_2||_2 \gamma_p (\beta_p\sqrt{p(n)})^{-1} \sim \gamma_p^2 (\beta_p\sqrt{p(n)})^{-1}$ for (g) and $||\mu_1-\mu_2||_2  (\beta_p\sqrt{p(n)})^{-1} \sim \gamma_p (\beta_p\sqrt{p(n)})^{-1}$ for (h) (as $\gamma_p^{-2}||\mu_1-\mu_2||_2^2 \to c>0$). Proposition \ref{prop: 2} also states that (II) holds if (a)-(c) in Proposition \ref{prop: 1} are satisfied along with Conditions (i) and (j) which are similar to (g) and (h) except  here the rates of decay are $\gamma_p^{4} (\beta_p^2 {p(n)})^{-1}$ and  $\gamma_p^{3} (\beta_p^2 {p(n)})^{-1}$  respectively. Moreover, (III) is satisfied if (a) and (c) in Proposition \ref{prop: 1} hold. 

Consider the following simplified conditions.
\vskip5pt
\noindent (k) $p(n) \frac{\displaystyle{\sup_{i=1,2}\sup_{1 \leq k \leq p(n)} |\mu_{ik,p(n+1)}-\mu_{ik,p(n)}|^2}}{||\mu_{1,p(n)}-\mu_{2,p(n)} ||_2^2} \to 0$ and  $p(n) \frac{\displaystyle{\sup_{1 \leq k \leq p(n)} |\mu_{1k,p(n)}-\mu_{2k,p(n)}|^2}}{||\mu_{1,p(n)}-\mu_{2,p(n)} ||_2^2} \to 0$.  That is uniformly over $i=1,2$ and $1 \leq k \leq p$,  the first order differences of $\{\mu_{ik,p}:\ n \geq 1\}$ and the sequence $\{|\mu_{1k,p}-\mu_{2k,p}|:\ n \geq 1\}$ decay faster than $||\mu_1-\mu_2||_2 p^{-1/2}$. 
\vskip 5pt
\noindent (l) $p(n) \frac{\displaystyle{\sup_{i=1,2}\sup_{k \in \mathcal{K}_n} |\mu_{ik,p(n+1)}-\mu_{ik,p(n)}|}}{||\mu_{1,p(n)}-\mu_{2,p(n)} ||_2^2} \to 0$ and  $p(n) \frac{\displaystyle{\sup_{k \in \mathcal{K}_n} |\mu_{1k,p(n)}-\mu_{2k,p(n)}|}}{||\mu_{1,p(n)}-\mu_{2,p(n)} ||_2^2} \to 0$.  That is uniformly over $i=1,2$ and $k \in \mathcal{K}_n$,  the first order differences of $\{\mu_{ik,p}:\ n \geq 1\}$ and the sequence $\{|\mu_{1k,p}-\mu_{2k,p}|:\ n \geq 1\}$ decay faster than $||\mu_1-\mu_2||_2^2 p^{-1}$.
\vskip 5pt
\noindent Suppose (a)-(d) in Proposition \ref{prop: 1} hold and $\gamma_p^{-1}\beta_p = O(1)$ (e.g. $\beta_p = O(1)$ and $\inf_{p} \gamma_p >0$).  Then it is easy to see that (A1) and (A6)  are satisfied under (k) and (l) respectively. 

\noindent If $\gamma_p^{-1}\beta_p = O(1)$ and $\gamma_p = O(1)$, then (g)-(j) in Proposition \ref{prop: 2} reduce to  (l1), given below.
\vskip5pt
\noindent (l1) $p(n) \displaystyle{\sup_{i=1,2}\sup_{k \in \mathcal{K}_n} |\mu_{ik,p(n+1)}-\mu_{ik,p(n)}|} \to 0$ and  $p(n) \displaystyle{\sup_{k \in \mathcal{K}_n} |\mu_{1k,p(n)}-\mu_{2k,p(n)}|} \to 0$.  That is uniformly over $i=1,2$ and $k \in \mathcal{K}_n$,  the first order differences of $\{\mu_{ik,p}:\ n \geq 1\}$ and the sequence $\{|\mu_{1k,p}-\mu_{2k,p}|:\ n \geq 1\}$ decay faster than $p^{-1}$.
\vskip 5pt

Following the proof of Propositions \ref{prop: 1} and \ref{prop: 2}, it is easy to see that, under cross-sectional independence i.e. when $A_{j,p} = \text{Diag}\{a_{j,k}:\ 1\leq k \leq p\}\ \forall j \geq 0$, $\gamma_p$ and $\beta_p$ both can be replaced by smaller quantity $\tilde{\gamma}_p := \sup_{1 \leq k \leq p} \sum_{j=0}^{\infty} |a_{j,k}|$ throughout the propositions.  In this case also, (e), (f) in Proposition \ref{prop: 1} are implied by (k) and (g)-(j) in Proposition \ref{prop: 2} are implied by (l).  Moreover  if $\tilde{\gamma}_p = O(1)$, then it can be replaced by $1$ and  (l) reduces to (l1).  

\noindent \textbf{Sufficient conditions for (A2) and (A7)}. It is easy to see that $((\sigma_{h_i,h_j}))_{1 \leq i,j \leq r}$ and  $((\tilde{\sigma}_{(k_i,k_j),(t_i,t_j)}))_{1 \leq i,j \leq r}$ are positive definite, 
if $((\Gamma_{i-j}))_{1 \leq i,j \leq r}$ is positive definite for all $r \geq 1$. 
Using similar arguments as in the univariate case in \citet{BD2009}, if $\Gamma_0$ is positive definite then so is  
$((\Gamma_{i-j}))_{1 \leq i,j \leq r}$ for all $r \geq 1$. Therefore, a sufficient condition for (A2) and (A7) is positive definiteness of $\Gamma_0$ uniformly over $p$ i.e., $\inf_{p} \lambda_{\min}(\Gamma_0) >0$ where $\lambda_{\min}$ denotes its smallest eigenvalue.

\subsection{Illustrative Examples} \label{subsec: example}
Next, we provide some examples of coefficient matrices which satisfy the assumptions required for Theorems \ref{thm: conrate} and \ref{thm: asympdist}. Note that Assumptions (A1), (A2), (A6), (A7) and the first condition of (A8) relate to the coefficient matrices and, as previously discussed, are rather technical in nature. The following examples and ensuing discussion provide additional insight into the nature of these assumptions.  

\noindent For any $p\times p$ matrix $M$, define $||M||_{\infty} = \max_{1 \leq i,j \leq p} |M(i,j)|$. 

\noindent For all examples given below,  suppose the first order differences of sequences  $\{p\}$ and $||\mu_{1,p}-\mu_{2,p}||_2$ grow  at a slower rate than these sequences respectively, i.e.,  suppose (c) and (d) in Proposition \ref{prop: 1} are satisfied.

\begin{Example} \label{example: band}
\noindent \textbf{Banded coefficient matrices}: A matrix $M = ((m_{ij}))$ is said to be $K$-banded, if $m_{ij} = 0$ for $|i-j|>K$. It is known that the $u$-th order population auocovariance matrix $\Gamma_u$ can be consistently estimated, if $\{A_j\}$ are $K$-banded (see e.g. \cite{BB2013a}).  The $K$-banded structure of $A_j$ implies that $\varepsilon_{kt}$ and $\eta_{k^\prime t-j}$ are uncorrelated for $|k -k^\prime|>K$. 
For all $j \geq 0$, suppose $A_{j,p}$ is symmetric  and $A_{j,p}(i,l) = 0$ if $|i-l|> K_{j,p}$.  Also assume $0<\sup_{p}\sum_{j=0}^{\infty} ||A_{j,p}||_{\infty} < C$,  $\sup_{j} K_{j,p} < K_p$ and the smallest eigenvalue of $A_{j,p}$ is bounded away from $0$ for at least one $j \geq 0$. Then, $\gamma_p \leq \beta_p = \sum_{j=0}^{\infty} ||A_{j,p}||_{(1,1)} \leq \sum_{j=0}^{\infty} K_{j,p}||A_{j,p}||_{\infty} \leq K_p \sum_{j=0}^{\infty} ||A_j||_{\infty} = O(K_p)$ and they are bounded away from $0$.  Hence, (A2), (A7) and the first condition of (A8) hold. The SNR condition becomes $\frac{n}{pK_p^2} ||\mu_1-\mu_2||_2^2 \to \infty$.  Finally, (A1) and (A6) are satisfied if (a)-(j) in Propositions \ref{prop: 1} and \ref{prop: 2} hold with  $\beta_p$ replaced by $K_p$.  
For more explanation, suppose the first order differences of sequences $\{\gamma_p\}$ and $\{K_p\}$  grow  at a slower rate than these sequences respectively.
Then (A1) holds  if uniformly over $i=1,2$ and $1 \leq k \leq p$,  the first order differences of $\{\mu_{ik,p}:\ n \geq 1\}$ and the sequence $\{\gamma_p^{-1}|\mu_{1k,p}-\mu_{2k,p}|:\ n \geq 1\}$ decay faster than $||\mu_1-\mu_2||_2 \gamma_p  (K_p\sqrt{p})^{-1}$ and $||\mu_1-\mu_2||_2  (K_p\sqrt{p})^{-1}$ respectively.  Moreover (A6) is satisfied if  uniformly over $i=1,2$ and $k \in \mathcal{K}_n$,  the first order differences of $\{\mu_{ik,p}:\ n \geq 1\}$ and the sequence $\{\gamma_p^{-1}|\mu_{1k,p}-\mu_{2k,p}|:\ n \geq 1\}$ decay faster than $||\mu_1-\mu_2||_2^2 \gamma_p^2  (K_p^2 {p})^{-1}$ and $||\mu_1-\mu_2||_2^2 \gamma_p  (K_p^2 {p})^{-1}$ respectively.
\end{Example}

\begin{Example} \label{example: VARMA}
\textbf{VARMA  process}: Suppose $\{\varepsilon_{t,p(n)}^{(n)}\}$ follows a VARMA$(p,q)$ process given by
\begin{eqnarray} \label{eqn: ARMA}
\sum_{j=0}^{q} \Psi_{j,p(n)} \varepsilon_{t-j,p(n)}^{(n)}  = \sum_{j=0}^{p} \Phi{j,p(n)} \eta_{t-j,p(n)}
\end{eqnarray}
where $\sup_{p} ||\Psi_{j,p}||_{(1,1)},  \sup_{p} ||\Phi_{j,p}||_{(1,1)} <\infty$, the smallest eigenvalue of $\Psi_{j,p}$ and $\Phi_{j,p}$ are bounded away from $0$ with respect to both $j$ and $p$,  and $\{\eta_{t,p}\}$ is as described after Model (\ref{eqn:model}). Then, (\ref{eqn: ARMA}) can be expressed as Model (\ref{eqn:model}) under the causality condition
$\sum_{j=0}^{q} \Psi_{j,p(n)}z^{j} \neq 0\ \forall\ z\in \mathbb{C},\  |z| < 1+\epsilon$, for some $\epsilon >0$. 
For details, see \cite{BD2009} and \cite{BB2013a}.   We assume these causality conditions hold. Then, $\sup_{p} ||A_{j,p}||_{(1,1)}  <\infty$ and $\{A_{j,p}\}$ decays exponentially with $j$. 
Hence, $\gamma_p =\beta_p = O(1)$ and they are also bounded away from $0$. Therefore, (A2), (A7) and the first condition of (A8) are met.  In addition, (SNR) becomes $\frac{n}{p} ||\mu_1-\mu_2||_2^2 \to \infty$.  Finally, (A1) and (A6) are satisfied if conditions (k) and (l1),  stated after Propositions \ref{prop: 1} and \ref{prop: 2},  hold respectively. 
\end{Example}

\begin{Example}  \label{example: 1}
\textbf{(Separable cross-sectional and time dependence)}
Often, dependence among panels and dependence across time can be separated. For example,
let $A_{j,p} = \theta^{j} B_p$ for all $j \geq 0$ and for some $|\theta| < 1$, $p \times p$ matrix $B_p$, where $\{B_p\}$ is a sequence of nested matrices. It is easily observed that the time dependence is controlled by $\theta$ whereas cross-sectional dependence is absorbed into $B_p$. As these two types of dependency are controlled by two  different parameters, this model is said to exhibit {\em separable} cross-sectional and time dependence.  Here,  $||A_{j,p}||_2 = |\theta|^{j}  ||B_p||_{2}$ and  $||A_{j,p}||_{(1,1)} = |\theta|^{j}  ||B_p||_{(1,1)}$. Therefore, $\gamma_p  = (1-|\theta|)^{-1} ||B_p||_{2}$ and  $\beta_p =  (1-|\theta|)^{-1} ||B_p||_{(1,1)}$. Thus,  $\gamma_p$ and $\beta_p$ are of orders $||B_p||_2$ and $||B_p||_{(1,1)}$ respectively and they characterize only cross-sectional dependence. The following statements hold: 
\vskip 2pt
\noindent (a) (SNR) is satisfied if $\frac{n}{p||B_p||_{2}^2} ||\mu_1-\mu_2||_2^2 \to \infty$. 
\vskip 2pt
\noindent (b) (A1) and (A6) hold  if (a)-(j) in Propositions \ref{prop: 1} and \ref{prop: 2} are satisfied after replacing $\gamma_p$ and $\beta_p$ by $||B_p||_{2}$ and $||B_p||_{(1,1)}$ respectively.  More simplification is given in (i) and (ii) below. 
\vskip 2pt
\noindent (c) (A2) and (A7) hold if the smallest  singular value of $B_p$ is bounded away from $0$ and $|\theta|<1$.
\vskip 2pt
\noindent (d) The first condition in (A8) is satisfied if $\frac{\sup_{k \in \mathcal{K}_0} \sup_{1 \leq i \leq p} |B_p(i,k)|}{\inf_{k \in \mathcal{K}_0} \inf_{1 \leq i \leq p} |B_p(i,k)|} = o(p^{1/6})$. 
\vskip 2pt
\noindent The above statements provide simplified assumptions for Theorems \ref{thm: conrate} and \ref{thm: asympdist} to hold. 
\vskip 2pt
\noindent Next, we discuss some special choices for $B_p$ which often arise in practice. 
\vskip 2pt
\noindent (i) \textbf{Coefficient matrices with equal off-diagonal elements}.  Coefficient matrices with equal off-diagonal entries arise whenever the covariances between  $\{\varepsilon_{kt}\}$ and $\{\eta_{k^\prime t^\prime}\}$, $k \neq k^\prime$, do not depend on $k$ and $k^\prime$; i.e., when these covariances are all equal.  Suppose $B_p = (1-\rho)I_p + \rho J_p$ where $0<\rho <1$,  $I_p$ is the identity matrix of order $p$ and $J_p$ is the $p \times p$ matrix with all elements $1$. Then $||B_p||_{2} = ||B_p||_{(1,1)} = (1-p)\rho + 1$. Thus, (SNR) becomes $\frac{n}{p^3} ||\mu_1-\mu_2||_2^2 \to \infty$. In Propositions \ref{prop: 1} and \ref{prop: 2},  condition (c) implies conditions (a) and (b). Also as $\gamma_p^{-1}\beta_p =O(1)$,  (A1) and (A6) hold if (k) and (l), stated after Propositions \ref{prop: 1} and \ref{prop: 2},  are satisfied respectively.
As $B_p$ is positive definite with smallest eigenvalue $(1-\rho)>0$, Assumptions (A2) and (A7) hold if $|\theta|<1$. Moreover, the  condition of (A8) is satisfied, since $\sup_{k \in \mathcal{K}_0} \sup_{1 \leq i \leq p} |B_p(i,k)| \leq 1$ and $\sup_{k \in \mathcal{K}_0} \sup_{1 \leq i \leq p} |B_p(i,k)|\geq \rho$.
\vskip 2pt
\noindent (ii) \textbf{Toeplitz coefficient matrices}.  Toeplitz  matrices have a wide range of applications in many fields, including engineering, economics and biology for modeling and analysis of stationary stochastic processes. A Toeplitz structure in coefficient matrices can arise naturally, when the covariances between $\{\varepsilon_{kt}\}$ and $\{\eta_{k^\prime t^\prime}\}$ depends only on $|k - k^\prime|$ and $|t-t^\prime|$. 
 Suppose $B_p = ((t_{|i-j|}))_{p \times p}$ where $\sum_{k \geq 0} |t_k| < \infty$ and the smallest eigenvalue of $B_p$ is bounded away from $0$. Then, $||B_p||_{2} \leq ||B_p||_{(1,1)} \leq \sum_{k \geq 0} |t_k| < \infty$. Thus, $\gamma_p = O(1)$, $\beta_p = O(1)$ and they are bounded away from $0$.  Hence, (SNR) becomes $\frac{n}{p} ||\mu_1-\mu_2||_2^2 \to \infty$, which is the same SNR required for independence across both time and panels. (A1) and (A6) holds if (k) and (l1), stated after Propositions \ref{prop: 1} and \ref{prop: 2}, are satisfied respectively.
  As the smallest eigenvalue of $B_p$ is bounded away from $0$, (A2) and (A7) also hold if $|\theta| <1$. Moreover, this choice of $B_p$ satisfies (A8). 
\end{Example}

\begin{Example} \label{example: polydep}
\noindent Time dependence in Example \ref{example: 1}  may not decay exponentially fast.  More generally, in Example \ref{example: 1}, $\{\theta^j: j 
\geq 0\}$ can be replaced by $\{a_j: j \geq 0\}$ where $0< \sum_{j=0}^{\infty}|a_j| <\infty$.  For example, polynomially decaying time dependence given by $A_j =a_j B_p$, $a_j = (j+1)^{-k},\ k > 1,\ j \geq 0$ is also allowed. In this case,  (a)-(d) in Example \ref{example: 1} continue to hold once we replace $|\theta|<1$ by $0< \sum_{j=0}^{\infty}|a_j| <\infty$. 
\end{Example}

\noindent   One may think that the structures in Examples \ref{example: 1}  and \ref{example: polydep}  are restrictive. In Example \ref{example: dominant}, we consider a significantly wider class of  coefficient matrices  which are dominated by separable cross-sectional and time dependence structure.

\begin{Example} \label{example: uncorpanel}
\textbf{Independence across panels and dependence across time}. All  previous examples deal with correlated panels. In this example, we consider the case where panels are independent, but dependence across time is present. As cross-sectional dependence is absorbed into the off-diagonal entries of the coefficient matrices, independence across panels can be modeled with diagonal coefficient matrices $A_{j,p} = \text{Diag}\{a_{j,k}:\ 1\leq k \leq p\}$. Suppose $\tilde{\gamma}_p :=  \sup_{1 \leq k \leq p} \sum_{j=0}^{\infty}  |a_{j,k}| = O(1)$. Then, by Remarks \ref{rem: baigammamain} and \ref{rem: baigammadist},  the conclusions of Theorems \ref{thm: conrate} and \ref{thm: asympdist} continue to hold for this  case if instead of (SNR), condition (SNR*) $\frac{n}{p} ||\mu_1-\mu_2||_2^2 \to \infty$ is satisfied. Moreover, (A1) and (A6) holds if (k) and (l1), stated after Propositions \ref{prop: 1} and \ref{prop: 2}, are satisfied respectively.
Further, suppose $\sup_{j \geq 0} \sup_{k \geq 1} |a_{j,k}| >0$. Then, (A2), (A7) and the first condition of (A8) hold.  In this model, a weaker assumption (A8*) (instead of  (A8)) stated in Section \ref{cor: panelind} also serves our purpose: the expressions for $\sigma_{h_1,h_2}$, $\tilde{\sigma}_{(k_1,k_2),(t_1,t_2)}$ defined in (\ref{eqn: sigma}) and (\ref{eqn: tildesigma})  can be simplified further and the processes $\{W_t\}$ and $\{A_t\}$ in Theorem \ref{thm: asympdist}(c) turn out to be independent. A detailed discussion of this model is given in Section \ref{cor: panelind}.
\end{Example}
\noindent Some more examples are deferred to Section \ref{subsec: otherexamples}.

\begin{cor}  \textbf{Connections to the results presented in \citet{bai2010common}}. \label{rem: bai}
\vskip 2pt \noindent
{\bf (A)} We next compare our results previously with those in the paper by \citet{bai2010common} that posited that
data streams $\{X_{kt}\}$ are generated according to the model in Example \ref{example: uncorpanel} (elaborated  in Section \ref{cor: panelind}) and considered the following assumptions: 

\begin{enumerate}
\item
$\sup_{k} \sum_{j=0}^{\infty} j |a_{j,k}| < \infty$ 
\item
$p^{-1/2} \sum_{k=1}^{p}(\mu_{1k} - \mu_{2k})^2 \to \infty$ 
\item
$||\mu_1 - \mu_2||_2 \to \infty\ \ \text{and}\ \ n^{-1}p\log( \log (n)) \to 0$
\end{enumerate}

The key result in that paper is that under assumptions (1)-(3)
$\lim_{n \to \infty} P(\hat{\tau}_{n}=\tau_n) =1$. Details are presented in Theorems $3.1$ and $3.2$ in \citet{bai2010common}.

In comparison,  we assume in Example \ref{example: uncorpanel} (elaborated in Section \ref{cor: panelind}) that $\sup_{k} \sum_{j=0}^{\infty}  |a_{k,j}| < \infty$, which is clearly weaker than (1).
Further, observe that assumptions (SNR*) and (2) above indicate two different regimes, since none of them implies the other. 
Moreover, note that assumption (3) above is {\em stronger} than the posited (SNR*). Therefore, Example \ref{example: uncorpanel} (elaborated in  Section \ref{cor: panelind}(a))  implies 
\citet{bai2010common}'s result under assumptions (1) and (3). 

\noindent
{\bf (B)} Suppose $\{\varepsilon_{kt}\}$ are uncorrelated. Then, $a_{j,k} = 0$ for all $j \neq 0$. Thus  $\tilde{\sigma}_{(0,0),(t_1,t_2)}$ in (\ref{eqn: tildesigma})
becomes $\sigma^{*2}   = \lim \sum_{k\in \mathcal{K}_n} (\mu_{1k}-\mu_{2k})^2 a_{0,k}^2$. 
Let $B_h$ denote the standard Brownian motion. 

Suppose that $\{\varepsilon_{kt}\}$  are uncorrelated,  $||\mu_1 - \mu_2||_2 \to c>0$, $n^{-1}p\log( \log (n)) \to 0$  and  that $\sigma^*$ exists. Then, \citet{bai2010common} also established in Theorem $4.2$ that
\begin{eqnarray} \label{eqn: 24}
n (\hat{\tau}_{n} -\tau) \stackrel{\mathcal{D}}{\to}   \arg \max_{h \in \mathbb{Z}} (-0.5 |h|c + \sigma^* B_{h}).
\end{eqnarray}

To derive (\ref{eqn: 24}),  one needs to establish the asymptotic normality of $\sum_{k=1}^{m}(\mu_{1k} - \mu_{2k})\varepsilon_{kt}$, presented at the end of the first column on page $90$ in \citet{bai2010common}. To that end, further assumptions need to be imposed on $\mu_1$ and $\mu_2$ in addition to $||\mu_1 - \mu_2||_2 \to c>0$, as has been previously discussed around (\ref{eqn: asymnorlse})-(\ref{eqn: lyaplsedom}). 
However, such assumptions appear to be missing in \citet{bai2010common}. 

Finally, consider all the assumptions stated  before (\ref{eqn: 24}) in (B). 
Further, assume the weaker condition $pn^{-1} \to 0$ instead of $n^{-1}p\log( \log (n)) \to 0$.  Recall the sets $\mathcal{K}_0$ and $\mathcal{K}_n$ from (\ref{eqn: partition}).  
We additionally need (A9) and $\mathcal{K}_0$ as the empty set  for (\ref{eqn: 24}) to hold.  
Section \ref{cor: panelind}(c) provides a more general result for the model in Example \ref{example: uncorpanel}  when $\{\varepsilon_{kt}\}$ are not necessarily uncorrelated. 
\end{cor}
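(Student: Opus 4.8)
The plan is to obtain~(\ref{eqn: 24}) as a specialization of Theorem~\ref{thm: asympdist}(c): under the hypotheses collected before~(\ref{eqn: 24}) together with $pn^{-1}\to 0$, (A9) and $\mathcal{K}_0=\emptyset$, I would check that (SNR) and (A4)--(A9) hold and then read off and simplify the limiting process. Since the panels are independent I first invoke Remark~\ref{rem: baigammadist} to replace $\gamma_p$ everywhere by $\tilde{\gamma}_p=\sup_{1\le k\le p}\sum_{j\ge 0}|a_{j,k}|$; because $\{\varepsilon_{kt}\}$ is uncorrelated across time, $a_{j,k}=0$ for every $j\ge 1$, so $\tilde{\gamma}_p=\sup_k|a_{0,k}|=O(1)$ and is bounded away from $0$, and (SNR$'$) $\frac{n\tilde{\gamma}_p^{-2}}{p}||\mu_1-\mu_2||_2^2\to\infty$ becomes, under $||\mu_1-\mu_2||_2\to c$, simply $pn^{-1}\to 0$ --- this is exactly why the weaker growth restriction may replace $n^{-1}p\log\log n\to 0$. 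For the same reason $\Gamma_h=\sum_{j\ge 0}A_jA_{j+h}'$ vanishes for $h\ne 0$ while $\Gamma_0=\mathrm{Diag}\{a_{0,k}^2\}$, a fact used repeatedly below.

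Next I verify the Regime~(c) conditions. With $\mathcal{K}_0=\emptyset$ we have $\mathcal{K}_n=\{1,\dots,p(n)\}$, so (A4) is immediate, (A8) is a supremum over the empty set and hence vacuous, and (A5) holds since $\tau_n$ converges (as in the setting of \citet{bai2010common}); moreover, since the $\mathcal{K}_0$-component of the limit is absent, (A5) only serves to pin down that (vanishing) component. The substantive requirements are thus (A6) and (A7) for the index pair $(0,0)$ and (A9). For (A6): $c_1=\lim\tilde{\gamma}_p^{-2}\sum_{k\in\mathcal{K}_n}(\mu_{1k}-\mu_{2k})^2=\lim\tilde{\gamma}_p^{-2}||\mu_1-\mu_2||_2^2$ is the Regime~(c) limit $c$, and, using $\Gamma_h=0$ for $h\ne 0$, $\tilde{\sigma}_{(0,0),(t_1,t_2)}=I(t_1=t_2)\lim\tilde{\gamma}_p^{-4}\sum_{k}(\mu_{1k}-\mu_{2k})^2a_{0,k}^2$, which exists by the assumed existence of $\sigma^*$ and reduces, when $t_1=t_2$, to the variance $\sigma^{*2}$ identified in part~(B). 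For (A7): the matrix $((\tilde{\sigma}_{(0,0),(t_i,t_j)}))_{1\le i,j\le r}$ equals $\sigma^{*2}I_r$, so positive definiteness is exactly $\sigma^*>0$ (equivalently $\inf_p\lambda_{\min}(\Gamma_0)>0$), which we take to hold since a non-degenerate limit is the whole point. Finally (A9) is assumed, so Theorem~\ref{thm: asympdist}(c) applies.

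It remains to simplify the limit. Because $\mathcal{K}_0=\emptyset$, the summand $A_t^*$ and the Gaussian vector $(X_{k_it_i}^*)$ in Theorem~\ref{thm: asympdist}(c) drop out, leaving $n(\hat{\tau}_n-\tau_n)\stackrel{\mathcal{D}}{\to}\arg\max_{h\in\mathbb{Z}}\big(-0.5c_1|h|+\sum_{t=0\wedge h}^{0\vee h}W_t^*\big)$ with $(W_{t_1}^*,\dots,W_{t_r}^*)\sim\mathcal{N}_r(0,((\tilde{\sigma}_{(0,0),(t_i,t_j)})))$. By the computation above the $W_t^*$ are i.i.d.\ $\mathcal{N}(0,\sigma^{*2})$; the term $W_0^*$, present for every $h$, may be deleted without changing the argmax; and $c_1=c$. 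Hence the limit is $\arg\max_{h\in\mathbb{Z}}(-0.5|h|c+\sigma^*B_h)$ with $B$ a standard Brownian motion restricted to $\mathbb{Z}$, which is precisely~(\ref{eqn: 24}). The main obstacle here is not the specialization but the input it rests on: Theorem~\ref{thm: asympdist}(c) requires the asymptotic normality of $\sum_{k\in\mathcal{K}_n}(\mu_{1k}-\mu_{2k})(\varepsilon_{kt}-E\varepsilon_{kt})$ over the infinite index set $\mathcal{K}_n$, and as discussed around~(\ref{eqn: asymnorlse})--(\ref{eqn: lyaplsedom}) this is where the Lyapunov condition (A9) is indispensable --- exactly the ingredient whose justifying assumptions are absent from \citet{bai2010common} and which, together with the mean-regularity behind the existence of $\sigma^*$, must be added to their hypotheses.
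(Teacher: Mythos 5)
Your proposal is correct and follows essentially the same route as the paper: the paper justifies (\ref{eqn: 24}) by specializing Theorem \ref{thm: asympdist}(c) through Remark \ref{rem: baigammadist} to the diagonal-coefficient model, which is exactly what Sections \ref{cor: panelind}(c) and \ref{cor: paneltimeind}(c) record and what you re-derive (with the same observations that $\mathcal{K}_0=\emptyset$ kills the $A_t^*$ component, that $\Gamma_h=0$ for $h\neq 0$ makes the $W_t^*$ i.i.d.\ with variance $\sigma^{*2}$ so the two-sided random walk is $\sigma^* B_h$ on $\mathbb{Z}$, and that (SNR$^\prime$) reduces to $pn^{-1}\to 0$ in this regime). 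Your explicit handling of the shared $W_0^*$ term and your flagging that non-degeneracy requires $\sigma^*>0$ are minor refinements of details the paper leaves implicit, not a different argument.
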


\section{Data Driven Adaptive Inference} \label{sec: adap}

The results in Theorem \ref{thm: asympdist} identify three different limiting regimes for the least squares estimator
$\hat{\tau}_n$ that are determined by the norm difference of the model parameters before and after the change point. The latter norm difference is {a priori unknown}, thus posing a dilemma for the practitioner of which regime to use for construction of confidence intervals for the change point. Next, we present a
{\em data driven adaptive procedure} to determine  the quantiles of the asymptotic distribution of the change point, irrespective of the specific regime pertaining to the data at hand. 
\vskip 2pt
\noindent Let 
\begin{eqnarray}
\hat{\mu}_{1} = \frac{1}{n\hat{\tau}_n} \sum_{t=1}^{n\hat{\tau}_n} X_{t,p},\ \ \ \hat{\mu}_{2} = \frac{1}{n(1-\hat{\tau}_n)} \sum_{t=n\hat{\tau}_n + 1}^{n} X_{t,p}\ \ \text{and}\ \ \hat{b}_t = \begin{cases} \hat{\mu}_1\ \ \text{if $t \leq n\hat{\tau}_n$} \\
\hat{\mu}_2\ \ \text{if $t > n\hat{\tau}_n$}. 
\end{cases} \nonumber 
\end{eqnarray}
Define the sample autocovariance matrix of order $u$ by
\begin{eqnarray}
\hat{\Gamma}_u = \frac{1}{n} \sum_{t=1}^{n-u} (X_{t,p}- b_t)(X_{t+u,p}- b_{t+u})^\prime\ \ \forall u \geq 0. \nonumber 
\end{eqnarray}
For any matrix $M$ of order $p$ and $l>0$, the \textit{banded} version of $M$
is  $$B_l (M)=((m_{ij}I(|i-j|\leq l))).$$ 
It is known in the literature that $\hat{\Gamma}_u$ is not consistent for $\Gamma_u$ in $||\cdot||_2$. 
However, a suitable banded version of $\hat{\Gamma}_u$ is consistent when the coefficient matrices $\{A_j\}$ belong to an appropriate parameter space. See \cite{BB2013a} for a detailed discussion on the estimation of autocovariance matrices. 
A discussion of the parameter space for consistency is provided after stating the required SNR condition for adaptive inference (SNR-ADAP), later in this section. 

For $p$-dimensional vectors $Y_1,Y_2,\ldots,Y_n$, let $\text{Vec}(Y_1,Y_2,\ldots,Y_n)$ be the vector of dimension $pn$, built by stacking $Y_1,Y_2,\ldots,Y_n$; $((B_{l_n}(\hat{\Gamma}_{|t_1-t_2|}) ))_{1\leq i,j \leq n}$ is a $np \times np$ matrix, comprising of $n^2$-many $p \times p$ block matrices, with the $(i,j)$-th block being $B_{l_n}(\hat{\Gamma}_{|i-j|})$. 
\vskip 2pt
\noindent Generate data from the process $\{\varepsilon_{t,p,\text{ADAP}}:\ 1 \leq t \leq n\}$ that satisfies
\begin{eqnarray}
\text{Vec}(\varepsilon_{1,p,\text{ADAP}},\varepsilon_{2,p,\text{ADAP}},\ldots,\varepsilon_{n,p,\text{ADAP}}) \sim \mathcal{N}(0, ((B_{l_n}(\hat{\Gamma}_{|i-j|}) ))_{1\leq i,j \leq n} ),\ \ \ l_n = \left(\frac{\log p}{n} \right)^{-\frac{1}{(2+\alpha)}}.  \nonumber
\end{eqnarray}
Irrespective of the probability distribution of the originally observed data $\{X_t\}$, we always generate data from the above Gaussian process in the proposed adaptive inference procedure. Define $X_{t,p,\text{ADAP}} = \hat{b}_t + \varepsilon_{t,p,\text{ADAP}}$ and write 
$X_{t,p,\text{ADAP}} = (X_{1t,p,\text{ADAP}},X_{2t,p,\text{ADAP}},\ldots,X_{pt,p,\text{ADAP}})^\prime$. Obtain 
\begin{eqnarray}
\hat{h}_{\text{ADAP}} &=& \arg \min_{h \in (n(c^{*}-\tau_n),n(1-c^{*}-\tau_n)} \hat{L}(h)\ \ \text{where} \nonumber \\
\hat{L}(h) &=& \frac{1}{n}\sum_{k=1}^{p}\bigg[\sum_{t=1}^{n\hat{\tau}_n +h}(X_{kt,p,\text{ADAP}}-\hat{\mu}_{1k})^2  + \sum_{t=n\hat{\tau}_n+h+1}^{n}(X_{kt,p,\text{ADAP}}-\hat{\mu}_{2k})^2\bigg]. \nonumber
\end{eqnarray}
The following theorem states the asymptotic distribution of $\hat{h}_{\text{ADAP}}$ under a
stronger identifiability condition (SNR-ADAP) together with consistency of $B_{l_n}(\hat{\Gamma}_u)$.
\vskip 2pt
\noindent \textbf{(SNR-ADAP)} $\frac{n\gamma_p^{-2}}{p \log p} ||\mu_1-\mu_2||_2^2 \to \infty$
\vskip 2pt
\noindent We first define the appropriate parameter spaces for $\{A_j\}$ where consistency of $B_{l_n}(\hat{\Gamma}_u)$ can be achieved. To impose restrictions on the parameter space, define for
 any nested sequence of matrices $\{M_p = ((m_{ij}))_{p \times p}\}$,
\begin{eqnarray*}
||M||_{(1,1)}& =& \max_{j \geq 1} \sum_{i\geq 1} |m_{ij}|\ \ \text{and}\ \   
T(M,t)   = \max_{j\geq 1} \sum_{i:|i-j|>t}|m_{ij}|.
\end{eqnarray*}
$\{M_p\}$ is said to have polynomially decaying corner if $T(M,t) \leq Ct^{-\alpha}$ for some $C,\alpha>0$ and for all large $t$.
\noindent Let $\gamma_p^{-1}\max(||A_j||_{(1,1)}, ||A_j^{\prime}||_{(1,1)})=r_j, j \geq 0.$ 
We define the following class of $\{A_j\}_{j=0}^{\infty}$ for some $0<\beta<1$ and $\lambda\geq 0$,
$$\Im(\beta,\lambda)=\bigg\{\{A_j\}_{j=0}^{\infty}
:\sum_{j=0}^{\infty} r_j^{\beta} < \infty, \sum_{j=0}^{\infty}
r_j^{2(1-\beta)} j^{\lambda} <\infty\bigg\}$$
which ensures that the  dependence between $X_t$ and $X_{t+\tau}$ decreases with the lag $\tau$.
Note that summability implies that the decay rate of $r_j$ cannot be slower than a polynomial rate. In case of a finite order moving average process,  as we have a finite number of norm bounded parameter matrices, $\{A_j\}$ will automatically belong to $\Im(\beta,\lambda)$ for all $0<\beta<1$ and $\lambda\geq 0$.  

 For any $1 \leq i \leq p$, let $X_{i,t,p}$ be the $i$-th coordinate of the vector $X_{t,p}$. Next, we ensure that for any 
$t_1<t$ and $k>0$, the dependence between $X_{(i\pm k),t_1,p}$ and $X_{i,t,p}$ becomes weaker for larger lag $k$. We achieve this by putting restrictions over $\{T(\gamma_p^{-1}A_j,t):j=0,1,2,\dots\}$ for all $t>0$. 
Consider the following class for some $C,\alpha,\nu>0$ and $0<\eta<1$ as
\begin{eqnarray}
\mathcal{G}(C,\alpha,\eta,\nu) &=& \big\{\{A_{j}\} :
T\big(\gamma_p^{-1} A_j,t\sum_{u=0}^{j}\eta^{u}\big)<Ct^{-\alpha}r_j
j^\nu\sum_{u=0}^{j}{\eta^{-u\alpha}},\ \ \text{and}\nonumber \\
  && \hspace{6 cm} \sum_{j=k}^\infty
\frac{r_j r_{j-k} j^\nu}{\eta^{\alpha j}}<\infty\big\}. \label{eqn: Im13.1} \nonumber
\end{eqnarray}
Though the conditions in $\mathcal{G}(C,\alpha,\eta,\nu)$ are very technical, but they are satisfied if $\{r_j\}$'s are decaying exponentially fast and for all $j \geq 0$, $\gamma_p^{-1}A_j$ has polynomially decaying corner. 
 For VAR and VARMA processes, if all parameter matrices have polynomially decaying corner, then they satisfy the condition of $\mathcal{G}(C,\alpha,\eta,\nu)$. For details see \cite{BB2013a}. 

\noindent To establish the result, we introduce next the following assumptions.
\vskip 5pt
\noindent \textbf{(C1)} $\{A_j\} \in \Im(\beta,\lambda) \cap \mathcal{G}(C,\alpha,\eta,\nu)$ for some $0 <\beta, \eta <1$ and $C, \lambda, \alpha,\nu >0$. 
\vskip 5pt
\noindent Let $\eta_{kt,p}$ be the $k$-th coordinate of $\eta_{t,p}$.  
\vskip 5pt
\noindent \textbf{(C2)} $\sup_{j \geq 1} E(e^{\lambda
|\eta_{kt,p}|})<  C_1 e^{C_2\lambda^2} < \infty\ \  \text{for all }\ \lambda \in \mathbb{R}$ and for some $C_1,C_2>0$.
\vskip 5pt
\noindent \textbf{(C3)}  $\log p = o(n)$.
\vskip 2pt
\noindent By  Theorem $4.1$ of \cite{BB2013a}, if (C1), (C2) and (C3) hold, then  
\begin{eqnarray}
||B_{l_n}(\hat{\Gamma}_u)-\Gamma_u||_2= O_p(l_n^{-\alpha})\ \ \text{for all}\  u\ \text{and}\ \ l_n=\left(n^{-1}{\log p}\right)^{-\frac{1}{2(\alpha+1)}}.  \label{eqn: gammarate}
\end{eqnarray}
\vskip 5pt
\noindent The following theorem provides the limiting distribution of $\hat{h}_{\text{ADAP}}$.  Its proof is given in Section \ref{subsec: adap}. 
\begin{theorem} \label{thm: adap}
Suppose SNR-ADAP, (C1), (C2) and (C3)  hold. 
\vskip 2pt
\noindent (a)  If $\gamma_p^{-2}||\mu_1 -\mu_2||_2^2 \to \infty$, then $\lim_{n \rightarrow \infty}\,P(\hat{h}_{\text{ADAP}} = 0)=1$. 
\vskip 5pt
\noindent (b) If $\gamma_p^{-2}||\mu_1 -\mu_2||_2^2 \to 0$ and (A1), (A2) and (A3) hold, then $\gamma_p^{-2}||\mu_1-\mu_2||_2^2 \hat{h}_{\text{ADAP}}$ converges in distribution to the same limit as $n\gamma_p^{-2}||\mu_1-\mu_2||_2^2(\hat{\tau}_n-\tau_n)$ does in Theorem \ref{thm: asympdist}(b). 

\noindent (c) If $\gamma_p^{-2} ||\mu_1-\mu_2||_2^2 \to c>0$ and (A4)-(A9) hold, then $\hat{h}_{\text{ADAP}}$ converges in distribution to the same limit as $n(\hat{\tau}_n-\tau_n)$ does in Theorem \ref{thm: asympdist}(c). 
\end{theorem}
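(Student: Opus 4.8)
The plan is to reduce Theorem~\ref{thm: adap} to Theorem~\ref{thm: asympdist} applied to the synthetic stream $\{X_{t,p,\mathrm{ADAP}}\}$, which is itself a mean-shift model with change fraction $\hat\tau_n$, shift $\hat\mu_1-\hat\mu_2$, and a \emph{stationary Gaussian} error process whose autocovariance of lag $u$ is $B_{l_n}(\hat\Gamma_u)$ (and which therefore admits an MA$(\infty)$ representation to which Theorems~\ref{thm: conrate}--\ref{thm: asympdist} apply). The first, and main, block of work is a collection of consistency statements: (i) by Theorem~\ref{thm: conrate}, $\hat\tau_n$ is close enough to $\tau_n$ that $\|\hat\mu_1-\mu_1\|_2^2+\|\hat\mu_2-\mu_2\|_2^2=o_{\mathrm{P}}(\|\mu_1-\mu_2\|_2^2)$ (the noise part is $O_{\mathrm{P}}(p\gamma_p^2/n)=o_{\mathrm{P}}(\|\mu_1-\mu_2\|_2^2)$ by (SNR), and the misclassification part is of order $(|\hat\tau_n-\tau_n|)^2\|\mu_1-\mu_2\|_2^2$), whence $\|\hat\mu_1-\hat\mu_2\|_2^2/\|\mu_1-\mu_2\|_2^2\to1$ in probability; (ii) consequently $\gamma_p^{-2}\|\hat\mu_1-\hat\mu_2\|_2^2$ has the same limit ($\infty$, $0$, or $c$) as $\gamma_p^{-2}\|\mu_1-\mu_2\|_2^2$, so the synthetic data lie in the same regime; (iii) for $k$ in the finite set $\mathcal{K}_0$ one has $\gamma_p^{-1}\hat\mu_{ik}\to\mu_{ik}^*$ in probability, and $\sup_{k\in\mathcal{K}_n}\gamma_p^{-1}|\hat\mu_{1k}-\hat\mu_{2k}|\to0$ in probability, the added sampling error being $O_{\mathrm{P}}(\gamma_p^{-1}(\gamma_p^2\log p/n)^{1/2})=o_{\mathrm{P}}(1)$ by (C3); and (iv) $\sup_u\|B_{l_n}(\hat\Gamma_u)-\Gamma_u\|_2=o_{\mathrm{P}}(1)$, which is (\ref{eqn: gammarate}) --- valid under (C1)--(C3) by Theorem~4.1 of \cite{BB2013a} --- after the routine replacement of the infeasible residuals $X_{t,p}-b_t$ by $X_{t,p}-\hat b_t$, whose effect is negligible given (i).

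Granting (i)--(iv), parts (a)--(c) are obtained by running the proofs of Theorems~\ref{thm: conrate} and~\ref{thm: asympdist} on $\{X_{t,p,\mathrm{ADAP}}\}$ \emph{conditionally on the observed data}. For (a), since $\gamma_p^{-2}\|\hat\mu_1-\hat\mu_2\|_2^2\to\infty$ and the conditional SNR $\tfrac{n\gamma_p^{-2}}{p}\|\hat\mu_1-\hat\mu_2\|_2^2\to\infty$ --- this is where the extra $\log p$ in (SNR-ADAP) is spent, absorbing the slowly growing band width $l_n$ that enters the maximal inequalities for the Gaussian synthetic errors --- the localization argument behind Theorem~\ref{thm: asympdist}(a) gives $P(\hat h_{\mathrm{ADAP}}=0\mid\text{data})\to1$ in probability, hence $P(\hat h_{\mathrm{ADAP}}=0)\to1$. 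For (b) and (c), write $\hat M_n(h)=\hat L(h)-\hat L(0)$ as a process on the same rescaled lattice as $M_n$ in (\ref{eqn: normalizationlse}). Because the synthetic errors are exactly Gaussian, every finite-dimensional vector $(\hat M_n(h_1),\dots,\hat M_n(h_r))$ is an exact quadratic-plus-linear form in a Gaussian vector: its conditional mean reproduces the triangular drift ($-0.5|h|$ in regime~(b); $-0.5c_1|h|$ together with the $A_t^*$ pieces in regime~(c), using (i)--(iii)), and its conditional covariance converges, by (iii)--(iv), to exactly $((\sigma_{h_ih_j}))$, respectively $((\tilde{\sigma}_{(k_i,k_j),(t_i,t_j)}))$; tightness is inherited from the estimates already in the proof of Theorem~\ref{thm: asympdist}. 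The argmax continuous-mapping theorem (as in Lemma~\ref{lem: wvandis1}), now in the conditional-weak-convergence-in-probability sense, then gives that the rescaled $\hat h_{\mathrm{ADAP}}$ converges in distribution to the same limit as the rescaled $(\hat\tau_n-\tau_n)$ in Theorem~\ref{thm: asympdist}(b), respectively (c).

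The hard part will be making the conditional argument rigorous while the conditioning variables $(\hat\mu_1,\hat\mu_2,\{B_{l_n}(\hat\Gamma_u)\})$ are themselves random: one must show that along (almost) every realization in which these converge to their population targets the conditional law of $\hat h_{\mathrm{ADAP}}$ converges to the stated limit, and then combine this with (i)--(iv) to strip off the conditioning. Within this, the delicate points are: controlling the interaction of the growing truncation $l_n=(\log p/n)^{-1/(2+\alpha)}$ with the sum over lags that appears in the second moments of $\hat M_n$ --- this interaction is exactly what forces the extra $\log p$ in (SNR-ADAP) relative to (SNR); checking, in regime~(c), that the decomposition $M_n^{*}=M_n^{I}+M_n^{II}$ over $\mathcal{K}_n$ and the true (deterministic) finite set $\mathcal{K}_0$ transfers to the synthetic data, which is legitimate because $\gamma_p^{-1}\hat\mu_{ik}\to\mu_{ik}^*$ separates $\mathcal{K}_0$ from $\mathcal{K}_n$ with probability tending to one; and verifying the uniform-integrability/tightness hypotheses required for the argmax map in the conditional setting, which parallel but do not literally coincide with the ones established for Theorem~\ref{thm: asympdist}.
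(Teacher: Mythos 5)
Your proposal follows essentially the same route as the paper: condition on the observed data, use the consistency results for $\hat\mu_1,\hat\mu_2$, $\|\hat\mu_1-\hat\mu_2\|_2$ and the plug-in covariances $\hat\sigma_{h_1,h_2}$, $\hat{\tilde\sigma}_{(k_1,k_2),(t_1,t_2)}$ (the paper's Lemma \ref{lem: adap}), obtain the rate of $\hat h_{\text{ADAP}}$ from the conditional mean and variance of $\hat L(h)-\hat L(0)$ via Lemma \ref{lem: wvan1}, and then exploit the exact Gaussianity of the synthetic errors together with the argmax continuous-mapping Lemma \ref{lem: wvandis1} to identify the limits with those of Theorem \ref{thm: asympdist}(b),(c). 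The only substantive divergence is bookkeeping: in the paper the extra $\log p$ in (SNR-ADAP) is consumed by the $O_{\text{P}}\bigl(\sqrt{p\log p/n}\bigr)$ estimation error of $\hat\mu_1,\hat\mu_2$ and its propagation into Lemma \ref{lem: adap}(c),(g),(h), rather than by the interaction of the band width $l_n$ with the lag sums as you suggest, but this does not affect the validity of your plan.
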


Note that the asymptotic distribution of $\hat{h}_{\text{ADAP}}$ is identical to the asymptotic distribution of $\hat{\tau}_n$. Therefore, in practice we can simulate $\hat{h}_{\text{ADAP}}$ for a large number of replicates and calculate the quantiles of the resulting empirical distribution. The above Theorem guarantees that the empirical quantiles will be accurate estimates of the quantiles of the limiting distribution under the true regime.  Note that the proposed procedure is trivially parallelizable, which controls its computational cost. However, the procedure requires a stronger (SNR-ADAP) condition, together with assumptions (C1), (C2) and (C3), which represents the price we pay for not knowing the exact regime.

\section{Performance Evaluation} \label{sec: simulation}

Next, we investigate the performance of the least squares estimator $\hat{\tau}_n$ on synthetic data, and also undertake a comparison with the one introduced in \citet{cho2016change}, henceforth denoted by DC$_n$.

\noindent \textbf{Models considered:} Model (\ref{eqn:model}) is examined with the following two choices for the error process
$\{\varepsilon_t\}$. 
\vskip 3pt
\noindent \textbf{Model 1: ARMA$(1,1)$ process}. Let $\varepsilon_t - B_1 \varepsilon_{t-1} = \eta_t + B_2 \eta_{t-1}$,
where $B_1 = 0.25 (((0.3)^{|i-j|}))_{p \times p}$ and $B_2 = (((0.5)^{|i-j|}))_{p \times p}$. It is easy to see that $||B_1||_{2} < 1$ and hence the process $\{\varepsilon_t\}$ is causal. Therefore, it can be represented as an MA$(\infty)$ process with exponentially decaying spectral norm of the coefficient matrices. Moreover, this process is also a geometrically decaying $\alpha$-mixing process. 
\vskip 3pt
\noindent \textbf{Model 2: MA$(\infty)$ process with polynomially decaying coefficient}. Let $\varepsilon_t = \sum_{j=0}^{1000} \frac{1}{j^2} B_3 \eta_{t-j}$. We consider two choices for $B_3$ --- Model 2a: $B_3 = (((0.5)^{|i-j|}))_{p \times p}$ and Model 2b: $B_3 = 0.5I_p + 0.5 J_p$, where $I_p$ denotes the identity matrix of order $p$ and $J$ is a $p\times p$ matrix with all entries equal to $1$. In both models, the coefficient matrices are polynomially decaying.  In addition, $\gamma_p$ is bounded away from $0$,  bounded above for Model 2a and of order $p$ for Model 2b. It is then easy to see that they are not geometrically decaying $\alpha$-mixing processes, and therefore, are not amenable to the procedure in \citet{cho2016change}.  Although no theoretical results are established for this model in the latter paper, the ensuing simulation results render support to the empirical fact that the change point estimator in \citet{cho2016change} behaves similarly to the least squares estimator introduced in this study. 
 \vskip 3pt
 \noindent Define
 \begin{eqnarray}
 F_{1n}  = \frac{\sqrt{n}}{p\log n} \sum_{k=1}^{p}|\mu_{1k}-\mu_{2k}|\ \ \text{and}\ \  F_{2n} =\begin{cases}
 \frac{n}{p} \sum_{k=1}^{p}|\mu_{1k}-\mu_{2k}|^2 \ \ \text{for Models 1 and 2a} \\
 \frac{n}{p^3} \sum_{k=1}^{p}|\mu_{1k}-\mu_{2k}|^2\ \ \ \text{for Model 2b}.
 \end{cases}
 \end{eqnarray}
 Note that \cite{cho2016change} requires $F_{1n} \to \infty$, while the least squares estimator requires $F_{2n} \to \infty$. 
 \vskip 3pt
\noindent Throughout this section we consider $\tau_n = 0.5$. The choices for $\mu_1, \mu_2$ and the probability distribution of $\{\eta_{kt}\}$ are specified in the ensuing subsections, depending on the objective under consideration. 
To better understand  the performance of the change point estimators, 
we also mention the value of SNR ($F_{1n}$ for DC$_n$ and $F_{2n}$ for $\tau_n$) and the signal $s = \gamma_{p}^{-2}||\mu_1-\mu_2||_F^2$.  Consistency of the change point estimators needs the SNR to go to infinity. Moreover, asymptotic distribution of the change point estimators depends on $s$.

\noindent \textbf{(A) Effect of the (SNR) condition:} \label{subsec: snreffect}
We simulate from $\eta_{kt} \stackrel{i.i.d.}{\sim} \mathcal{N}(0,1)$, with $p =\sqrt{n}$. Further, we consider the following two choices for $\mu_1$ and $\mu_2$. 
\vskip 2pt
\noindent (i) $\mu_{1k} = 0$ and  $\mu_{2k} = k^{-1}\ \forall k$. In this case, $F_{1n}$ is bounded and therefore the identifiablity condition of \citet{cho2016change} is not satisfied.  Moreover, $F_{2n} > \sqrt{n} \to \infty$, which implies (SNR) holds for Models 1 and 2a. However, since $F_{2n} < C$ for some $C>0$, (SNR) is not satisfied for Model 2b. 

\vskip 2pt
\noindent (ii) $\mu_{1k} = 0$ and  $\mu_{2k} = \frac{p}{n^{1/4}}\ \forall k$. In this case, $F_{1n} \geq \frac{pn^{1/4}}{\log n} \to \infty$ and  $F_{2n} \geq \sqrt{n} \to \infty$.  Therefore, both the identifiablity condition in \citet{cho2016change} and (SNR) hold.  

\noindent Tables \ref{table: 1} and \ref{table: 2} depict the performance of the change point estimators in Cases (i) and (ii), respectively. In Table \ref{table: 1}, $\hat{\tau}_n$ performs badly in Model 2b and DC$_n$ does not perform well either, since the required SNR condition is not satisfied in either case. On other cells in Table \ref{table: 1}, (SNR) is satisfied and consequently $\hat{\tau}_n$ exhibits good performance. In Table \ref{table: 2} for Models 1 and 2a, $\hat{\tau}_n$ and DC$_n$ estimate $\tau_n$ very well and fairly accurately in the presence of a very high signal. In Table \ref{table: 2}  for Model 2b, the magnitude of the signal is moderate and therefore $\hat{\tau}_n$ and DC$_n$ do not perform as accurately as for Models 1 and 2a, but their performance is overall satisfactory. \\\\
\begin{center}
\begin{table} [h!] 
\hspace{5 cm}\caption{Mean value of the change point estimators with $\tau_n = 1/2$,  $\mu_{1k} = 0$,  $\mu_{2k} = k^{-1}\ \forall k$  and $p = \sqrt{n}$ and $50$ replications. $\eta_{kt} \stackrel{i.i.d.}{\sim} \mathcal{N}(0,1)$. Figures in brackets are the standard deviation of the change point estimators over $50$ replicates. SNR: $F_{1n}$ for DC$_n$ and $F_{2n}$ for $\hat{\tau}_n$, 
$s = \gamma_{p}^{-2}||\mu_1-\mu_2||_2^2$.}  \label{table: 1}
\vskip 2pt
\begin{tabular}{|m{1.9cm}|m{1.3cm}|m{1.2cm}|m{1.3cm}|m{1.2cm}|m{1.3cm}|m{1.2cm}|m{1.3cm}|m{1.2cm}|}
\hline 
 &  \multicolumn{2}{|c|}{$n=500$,  $p=23$} & \multicolumn{2}{|c|}{$n=1000$,  $p=32$} & \multicolumn{2}{|c|}{$n=5000$, $p=71$} & \multicolumn{2}{|c|}{$n=10000$, $p=100$} \\ 
\hline 
 & $\hat{\tau}_n$ & $\text{DC}_n$ & $\hat{\tau}_n$ & $\text{DC}_n$ &  $\hat{\tau}_n$ & $\text{DC}_n$ & $\hat{\tau}_n$ & $\text{DC}_n$\\
\hline
Model 1 & 0.59 (0.074) & 0.78 (0.44) & 0.536 (0.039)  & 0.85 (0.36) & 0.4936 (0.0068)  & 0.67 (0.42)  & 0.5024 (0.0028) & 0.83 (0.44) \\ 
\hline 
SNR (Model 1)  & 22.36 & 0.6 & 31.62 & 0.588 & 70.71 & 0.569 & 100 & 0.563\\
\hline
$s$ $\text{(Model 1)}$ & 1.602 & 1.602 & 1.614 & 1.614 &  1.631 & 1.631 & 1.635  & 1.635 \\ 
\hline 
Model 2a & 0.584 (0.083) & 0.68 (0.37)  & 0.461 (0.037) & 0.82 (0.41) & 0.5068 (0.0072)  &  0.75 (0.33) & 0.4974 (0.0031)  & 0.79 (0.39)\\ 
\hline
SNR $\text{(Model 2a)}$  & 22.36 & 0.6 & 31.62 & 0.588 & 70.71 & 0.569 & 100 & 0.563 \\
\hline
$s$ $\text{(Model 2a)}$ & 1.602 & 1.602 & 1.614 & 1.614 &  1.631 & 1.631 & 1.635  & 1.635 \\ 
\hline 
Model 2b & 0.83 (0.46) & 0.69 (0.34) & 0.64 (0.38) & 0.78 (0.42)  & 0.72 (0.47) & 0.67 (0.32) & 0.81 (0.41) & 0.84 (0.48) \\ 
\hline
SNR $\text{(Model 2b)}$  & 0.0447 & 0.6 & 0.0316 & 0.588 & 0.01414 & 0.569 & 0.01 & 0.563\\
\hline
$s$ $\text{(Model 2b)}$ & $3.2 \times 10^{-3}$ & $3.2 \times 10^{-3}$ & $1.61 \times 10^{-3}$ & $1.61 \times 10^{-3}$ & $3.3 \times 10^{-4}$ & $3.3 \times 10^{-4}$ & $1.63 \times 10^{-4}$ & $1.63 \times 10^{-4}$\\
\hline
\end{tabular} 
\end{table}
\end{center}

\begin{center}
\begin{table} [h!] 
\hspace{5 cm}\caption{Mean value of the change point estimators with $\tau_n = 1/2$,  $\mu_{1k} = 0$,  $\mu_{2k} = \frac{p}{n^{1/4}}\ \forall k$  and $p = \sqrt{n}$ and $50$ replications. $\eta_{kt} \stackrel{i.i.d.}{\sim} \mathcal{N}(0,1)$.  Figures in brackets are the standard deviation  of the change point estimators over $50$ replicates. SNR: $F_{1n}$ for DC$_n$ and $F_{2n}$ for $\hat{\tau}_n$, 
$s = \gamma_{p}^{-2}||\mu_1-\mu_2||_2^2$.} \label{table: 2} 
\vskip 2pt
\begin{tabular}{|m{1.9cm}|m{1.2cm}|m{1.2cm}|m{1.2cm}|m{1.2cm}|m{1.2cm}|m{1.2cm}|m{1.2cm}|m{1.2cm}|}
\hline 
 &  \multicolumn{2}{|c|}{$n=500$,  $p=23$} & \multicolumn{2}{|c|}{$n=1000$,  $p=32$} & \multicolumn{2}{|c|}{$n=5000$, $p=71$} & \multicolumn{2}{|c|}{$n=10000$, $p=100$} \\ 
\hline 
 & $\hat{\tau}_n$ & $\text{DC}_n$ & $\hat{\tau}_n$ & $\text{DC}_n$ &  $\hat{\tau}_n$ & $\text{DC}_n$ & $\hat{\tau}_n$ & $\text{DC}_n$\\
 \hline
Model 1 & 0.4985 ($16 \times 10^{-3}$)   & 0.5018 ($22 \times 10^{-3}$) & 0.5012 ($4.9 \times 10^{-3}$)  & 0.5016 ($9 \times 10^{-3}$)  & 0.5008 ($13.6 \times 10^{-4}$) & 0.499 ($15.2\times 10^{-4}$)   & 0.5002 ($3 \times 10^{-4}$) & 0.5002 ($3.5 \times 10^{-4}$) \\ 
\hline 
SNR (Model 1)  & $11.18 \times 10^{3}$ & 17.014 & $31.622 \times 10^3$ & 25.743 & $35.355 \times 10^{4}$ &  69.812 & $10^{6}$ & 108.574\\
\hline
$s$ $\text{(Model 1)}$ & 500 & 500 & 1000 & 1000 & 5000 & 5000 & 10000  & 10000\\ 
\hline
Model 2a &  0.5011 ($12 \times 10^{-3}$)  & 0.5015 ($24 \times 10^{-3}$) & 0.501 ($7 \times 10^{-3}$)  & 0.5014 ($10 \times 10^{-3}$)  & 0.5006 ($12.4 \times 10^{-4}$) & 0.4992 ($13.6 \times 10^{-4}$)  & 0.5001 ($2.6 \times 10^{-4}$) & 0.5002 ($3.2 \times 10^{-4}$)   \\ 
\hline 
SNR (Model 2a)  & $11.18 \times 10^{3}$ & 17.014 & $31.622 \times 10^3$ & 25.743 & $35.355 \times 10^{4}$ &  69.812 & $10^{6}$ & 108.574\\
\hline
$s$ $\text{(Model 2a)}$ & 500 & 500 & 1000 & 1000 & 5000 & 5000 & 10000  & 10000 \\ 
\hline 
Model 2b & 0.405 (0.123)   & 0.62 (0.1) & 0.552 (0.058)  & 0.43 (0.072)  & 0.4911  (0.013)  & 0.5107 (0.0538)  & 0.5042 (0.007) & 0.5088 (0.0489)  \\ 
\hline 
SNR (Model 2b)  & 22.36 & 17.014 & 31.62 & 25.743 & 70.71 & 69.812  & 100  & 108.574 \\
\hline
$s$ $\text{(Model 2b)}$ &0.95 & 0.95 & 0.98 & 0.98 & 0.99 & 0.99  & 1  & 1\\ 
\hline
\end{tabular} 
\end{table}
\end{center}

\noindent \textbf{(B) Effect of dimension $p$ and sample size $n$:}
Next, we simulate from the same setup as in (A)(ii) and consider $p=e^{n^{7/32}}$. Clearly, both $F_{1n}$ and $F_{2n}$ go to $\infty$. However, the DC$n$ estimator additionally requires $p \sim n^\delta$ for some $\delta \in [0,\infty)$, which is not satisfied when $p=e^{n^{7/32}}$. Nevertheless, the results in Table \ref{table: 3} suggest that DC$_n$ performs as well as $\hat{\tau}_n$ when  $p=e^{n^{7/32}}$. 

\begin{center}
\begin{table} [h!] 
\hspace{5 cm}\caption{Mean value of the change point estimators with $\tau_n = 1/2$,  $\mu_{1k} = 0$,  $\mu_{2k} = \frac{p}{n^{1/4}}\ \forall k$  and $p=e^{n^{7/32}}$ and $50$ replications. $\eta_{kt} \stackrel{i.i.d.}{\sim} \mathcal{N}(0,1)$.  Figures in brackets are the standard deviation of the change point estimators over $50$ replicates. SNR: $F_{1n}$ for DC$_n$ and $F_{2n}$ for $\hat{\tau}_n$,  
$s = \gamma_{p}^{-2}||\mu_1-\mu_2||_2^2$.} \label{table: 3} 
\vskip 2pt
\begin{tabular}{|m{1.9cm}|m{1.2cm}|m{1.2cm}|m{1.2cm}|m{1.2cm}|m{1.2cm}|m{1.2cm}|m{1.2cm}|m{1.2cm}|}
\hline 
 &  \multicolumn{2}{|c|}{$n=500$,  $p=50$} & \multicolumn{2}{|c|}{$n=1000$,  $p=93$} & \multicolumn{2}{|c|}{$n=5000$, $p=629$} & \multicolumn{2}{|c|}{$n=10000$, $p=1807$} \\ 
\hline 
 & $\hat{\tau}_n$ & $\text{DC}_n$ & $\hat{\tau}_n$ & $\text{DC}_n$ &  $\hat{\tau}_n$ & $\text{DC}_n$ & $\hat{\tau}_n$ & $\text{DC}_n$\\
\hline
Model 1 & 0.5016 ($19 \times 10^{-3}$) &  0.5016 ($23 \times 10^{-2}$)  & 0.4988 ($9 \times 10^{-3}$) & 0.5014 ($12 \times 10^{-3}$) & 0.5006 ($9.6 \times 10^{-4}$) & 0.5006 ($13.2 \times 10^{-4}$) & 0.4999 ($2.1 \times 10^{-4}$) & 0.5002 ($1.9 \times 10^{-4}$)\\ 
\hline 
SNR (Model 1)  & $5.59 \times 10^{4}$ & 38.045  & $2.74 \times 10^5$ & 75.709 & $27.98 \times 10^{6}$ & 621.007  & $32.65 \times 10^{7}$ & 1961.925\\
\hline
$s$ $\text{(Model 1)}$ & 5590.17 & 5590.17 & 25436 & 25436 & $3519 \times 10^3$ & $3519 \times 10^3$  & $59 \times 10^6$ & $59 \times 10^6$\\ 
\hline 
Model 2a & 0.5014 ($12 \times 10^{-3}$) & 0.5012 ($19 \times 10^{-3}$) & 0.5014 ($9.4 \times 10^{-3}$) & 0.5015 ($9.1 \times 10^{-3}$) & 0.5007 ($9.9 \times 10^{-4}$) & 0.4994 ($11.7 \times 10^{-4}$) & 0.5001 ($2.9 \times 10^{-4}$) & 0.5001 ($3.6 \times 10^{-4}$)\\ 
\hline 
SNR (Model 2a)  & $5.59 \times 10^{4}$ & 38.045  & $2.74 \times 10^5$ & 75.709 & $27.98 \times 10^{6}$ & 621.007  & $32.65 \times 10^{7}$ & 1961.925 \\
\hline
$s$ $\text{(Model 2a)}$ & 5590.17 & 5590.17 & 25436 & 25436 & $3519 \times 10^3$ & $3519 \times 10^3$  & $59 \times 10^6$ & $59 \times 10^6$ \\  
\hline 
Model 2b & 0.578 (0.074) & 0.594 (0.084) & 0.468 (0.028) & 0.538 (0.038) & 0.5044 (0.0052) & 0.4952 (0.0048) & 0.5017 (0.0012) & 0.4985 (0.0018) \\ 
\hline 
SNR (Model 2b)  & 22.36 & 38.045 & 31.62 & 75.709 & 70.71 & 621.007 & 100 & 1961.925\\
\hline
$s$ $\text{(Model 2b)}$ & 2.24 & 2.24 & 2.94 & 2.94 & 8.895 & 8.895 & 18.07  & 18.07\\ 
\hline 
\end{tabular} 
\end{table}
\end{center}


\noindent \textbf{(C) Effect of moments:}
Next, we simulate from $\eta_{kt} \stackrel{i.i.d.}{\sim} (\textbf{Beta}(4,1))^{-1} - E(\textbf{Beta}(4,1))^{-1}$ and consider $n$, $p$, $\mu_1$ and $\mu_2$ to be as in settings (A)(ii) and (B). Recall that the DC$n$ estimator requires finite moments of all orders, whereas $\hat{\tau}_n$ only requires a $4$-th order moment. The above choice for $\eta_{kt}$ has finite moments up to order $4$. Tables \ref{table: 4} and \ref{table:five} illustrate the performance of both change point estimators. As expected, DC$_n$ exhibits an inferior performance, whereas $\hat{\tau}_n$ estimates the change point very well, as seen in both Tables \ref{table: 4} and \ref{table:five}.

\begin{center}
\begin{table} [h!] 
\hspace{5 cm}\caption{Mean value of the change point estimators with $\tau_n = 1/2$,  $\mu_{1k} = 0$,  $\mu_{2k} = \frac{p}{n^{1/4}}\ \forall k$  and $p = \sqrt{n}$ and $50$ replications. $1/\eta_{kt} \stackrel{i.i.d.}{\sim} (\textbf{Beta}(4,1))^{-1} - E(\textbf{Beta}(4,1))^{-1}$. Figures in brackets are the standard deviation of the change point estimators over $50$ replicates. SNR: $F_{1n}$ for DC$_n$ and $F_{2n}$ for $\hat{\tau}_n$, 
$s = \gamma_{p}^{-2}||\mu_1-\mu_2||_2^2$ are as in Table \ref{table: 2}} \label{table: 4}
\vskip 2pt
\begin{tabular}{|m{1.9cm}|m{1.2cm}|m{1.2cm}|m{1.2cm}|m{1.2cm}|m{1.2cm}|m{1.2cm}|m{1.2cm}|m{1.2cm}|}
\hline 
 &  \multicolumn{2}{|c|}{$n=500$,  $p=23$} & \multicolumn{2}{|c|}{$n=1000$,  $p=32$} & \multicolumn{2}{|c|}{$n=5000$, $p=71$} & \multicolumn{2}{|c|}{$n=10000$, $p=100$} \\ 
\hline 
 & $\hat{\tau}_n$ & $\text{DC}_n$ & $\hat{\tau}_n$ & $\text{DC}_n$ &  $\hat{\tau}_n$ & $\text{DC}_n$ & $\hat{\tau}_n$ & $\text{DC}_n$\\
\hline
Model 1 & 0.5017 ($18\times 10^{-3}$) & 0.59 (0.132) & 0.5011 ($9.6\times 10^{-3}$)  & 0.62 (0.072) & 0.5008 ($14.4\times 10^{-4}$) & 0.69 (0.0494) & 0.5001 ($3.5\times 10^{-4}$) & 0.39 (0.0457) \\ 
\hline 
Model 2a & 0.5014 ($15\times 10^{-3}$) & 0.63 (0.112) & 0.4987 ($10.1\times 10^{-3}$) & 0.42 (0.069) & 0.5007 ($15.2\times 10^{-4}$) & 0.42 (0.0516) & 0.5002 ($3.3\times 10^{-4}$)  & 0.64 (0.0439)\\ 
\hline 
Model 2b & 0.612 (0.126) & 0.58 (0.119) & 0.559 (0.046) & 0.41 (0.072) & 0.4932 (0.009)  & 0.67 (0.0513) & 0.4965 (0.0056) & 0.62 (0.0445)\\ 
\hline 
\end{tabular} 
\end{table}
\end{center}

\begin{center}
\begin{table} [h!] 
\hspace{5 cm}\caption{Mean value of the change point estimators with $\tau_n = 1/2$,  $\mu_{1k} = 0$,  $\mu_{2k} = \frac{p}{n^{1/4}}\ \forall k$  and $p=e^{n^{7/32}}$ and $50$ replications. $1/\eta_{kt} \stackrel{i.i.d.}{\sim} (\textbf{Beta}(4,1))^{-1} - E(\textbf{Beta}(4,1))^{-1}$. Figures in brackets are the standard deviation of the change point estimators over $50$ replicates. SNR: $F_{1n}$ for DC$_n$ and $F_{2n}$ for $\hat{\tau}_n$,  
$s = \gamma_{p}^{-2}||\mu_1-\mu_2||_2^2$ are as in Table \ref{table: 3}} \label{table:five}
\vskip 2pt
\begin{tabular}{|m{1.9cm}|m{1.2cm}|m{1.2cm}|m{1.2cm}|m{1.2cm}|m{1.2cm}|m{1.2cm}|m{1.2cm}|m{1.2cm}|}
\hline 
 &  \multicolumn{2}{|c|}{$n=500$,  $p=50$} & \multicolumn{2}{|c|}{$n=1000$,  $p=93$} & \multicolumn{2}{|c|}{$n=5000$, $p=629$} & \multicolumn{2}{|c|}{$n=10000$, $p=1807$} \\ 
\hline 
 & $\hat{\tau}_n$ & $\text{DC}_n$ & $\hat{\tau}_n$ & $\text{DC}_n$ &  $\hat{\tau}_n$ & $\text{DC}_n$ & $\hat{\tau}_n$ & $\text{DC}_n$\\
\hline
Model 1 & 0.5018 ($17\times 10^{-3}$) & 0.62 (0.115) & 0.5014 ($11 \times 10^{-3}$) & 0.66 (0.069)  &   0.5006 ($14.3 \times 10^{-4}$) & 0.61 (0.0491)  & 0.4998 ($1.7 \times 10^{-4}$) & 0.42 (0.0457)\\ 
\hline 
Model 2a & 0.5014 ($14 \times 10^{-3}$) & 0.58 (0.119) & 0.5011 ($9 \times 10^{-3}$) & 0.56 (0.063)  & 0.4992 ($13.7 \times 10^{-4}$)  & 0.38 (0.0501)  & 0.5001 ($1.5 \times 10^{-4}$) & 0.64 (0.0442)\\ 
\hline 
Model 2b & 0.474 (0.077) & 0.64 (0.121) & 0.557 (0.019) & 0.44 (0.071) & 0.4933 (0.0047) & 0.42 (0.0515) & 0.5012 (0.0018) & 0.38 (0.0437) \\ 
\hline 
\end{tabular} 
\end{table}
\end{center}

\noindent \textbf{(D) Performance of the Asymptotic distribution of $\hat{\tau}_n$:} We consider the same $n$, $p$, $\mu_{1k}$ and probability distribution of $\{\eta_{kt}\}$ as in setting (A). Consider the following two choices for $\mu_2$: 
\vskip 2pt
\noindent (i) $\mu_{2k} = \frac{1}{n^{3/8}}$ for Models 1 and 2a,  and $\mu_{2k} = \frac{p}{n^{3/8}}$ for Model 2b.  Therefore, $F_{2n} \to \infty$, but $\gamma_{p}^{-2} ||\mu_1-\mu_2||_2^2 \to 0$. 
\vskip 2pt
\noindent (ii)  $\mu_{2k} = \frac{1}{n^{1/4}}$ for Models 1 and 2a,  and $\mu_{2k} = \frac{p}{n^{1/4}}$ for Model 2b.  Therefore, $F_{2n} \to \infty$, but $\gamma_{p}^{-2} ||\mu_1-\mu_2||_2^2 \to 1$. 
\vskip 2pt
\noindent In Tables \ref{table: 6}-\ref{table: 9}, we present  95\% confidence intervals assuming knowledge of the true limiting regime, as well as their adaptive counterparts, along with the proportion of containing $\hat{\tau}_n$ based on $100$ replications. Tables \ref{table: 6} and \ref{table: 8} report theoretical confidence intervals (TCI) which are obtained by simulating observations from the asymptotic distribution given in Theorem \ref{thm: asympdist} assuming knowledge of the true limiting regime and then computing the sample quantiles. Tables \ref{table: 7} and \ref{table: 9} present adaptive confidence intervals (ACI) obtained by the method discussed in Section \ref{sec: adap}. The simulation results show tighter confidence intervals as the sample size increases. This is due to the fact $n\gamma_p^{-2}||\mu_1-\mu_2||_2^2 \to \infty$. Finally, the performance of ACI is as good as TCI, which demonstrates the utility of the data-driven adaptive procedure.

\begin{center}
\begin{table} [h!] 
\hspace{5 cm}\caption{ 95\% Theoretical confidence interval (TCI) and empirical \% (EP) of containing 
 $\tau_n = 1/2$ with $\mu_{1k} = 0$,  $\mu_{2k} = \frac{1}{n^{3/8}}\ \forall k$ for Models 1 and 2a,  $\mu_{2k} = \frac{p}{n^{3/8}}\ \forall k$ for Models 2b  and $p = \sqrt{n}$ based on $5000$ sample paths for each  $100$ replications. $\eta_{kt} \stackrel{i.i.d.}{\sim} \mathcal{N}(0,1)$. SNR: $\frac{n}{p\gamma_p^{2}}||\mu_1-\mu_2||_2^2$, $s = \gamma_{p}^{-2}||\mu_1-\mu_2||_2^2$.} \label{table: 6} 
\vskip 2pt
\begin{tabular}{|m{1.9cm}|m{1.3cm}|m{1.2cm}|m{1.2cm}|m{1.2cm}|m{1.2cm}|m{1.2cm}|m{1.2cm}|m{1.2cm}|}
\hline 
 &  \multicolumn{2}{|c|}{$n=500$,  $p=23$} & \multicolumn{2}{|c|}{$n=1000$,  $p=32$} & \multicolumn{2}{|c|}{$n=5000$, $p=71$} & \multicolumn{2}{|c|}{$n=10000$, $p=100$} \\ 
\hline 
 & 95\% TCI & EP & 95\% TCI& EP &  95\% TCI & EP & 95\% TCI & EP\\
\hline
Model 1 & 0.2601, 0.7426 
 & 95.2 & 0.3483, 0.6294 
 & 94.8 & 0.4552, 0.5381 
 & 94.8 & 0.4736, 0.5230 
 & 95\\ 
\hline 
Model 2a & 0.2667, 0.7273 
& 94.6 & 0.3592, 0.6444 
& 95.4 & 0.4568, 0.5406 
& 95.2 & 0.4730, 0.5243 
& 94.8\\ 
\hline 
Model 2b &  0.2689, 0.7411
& 94.6 & 0.3527, 0.6345
& 94.8 & 0.4558, 0.5385
& 95.2 & 0.4731, 0.5244
& 94.9 \\ 
\hline 
SNR  & 4.73 & &  5.62 & & 8.41 & & 10 &\\
\hline
$s$ & 0.211 & & 0.178 & & 0.119 & & 0.1 & \\
\hline
\end{tabular} 
\end{table}
\end{center}

\begin{center}
\begin{table} [h!] 
\hspace{5 cm}\caption{ 95\% Theoretical confidence interval (TCI) and empirical \% (EP) of containing 
 $\tau_n = 1/2$ with  $\mu_{1k} = 0$,  $\mu_{2k} = \frac{1}{n^{1/4}}\ \forall k$ for Models 1 and 2a, $\mu_{2k} = \frac{p}{n^{1/4}}\ \forall k$ for Models 2b   and $p = \sqrt{n}$ based on $5000$ sample paths for each  $100$ replications. $\eta_{kt} \stackrel{i.i.d.}{\sim} \mathcal{N}(0,1)$. SNR: $\frac{n}{p\gamma_p^{2}}||\mu_1-\mu_2||_2^2$, $s = \gamma_{p}^{-2}||\mu_1-\mu_2||_2^2$.} \label{table: 7}
\vskip 2pt
\begin{tabular}{|m{1.9cm}|m{1.2cm}|m{1.2cm}|m{1.2cm}|m{1.2cm}|m{1.2cm}|m{1.2cm}|m{1.2cm}|m{1.2cm}|}
\hline 
 &  \multicolumn{2}{|c|}{$n=500$,  $p=23$} & \multicolumn{2}{|c|}{$n=1000$,  $p=32$} & \multicolumn{2}{|c|}{$n=5000$, $p=71$} & \multicolumn{2}{|c|}{$n=10000$, $p=100$} \\ 
\hline 
 & 95\% TCI & EP & 95\% TCI& EP &  95\% TCI & EP & 95\% TCI & EP\\
\hline
Model 1 & 0.342, 0.650 
& 95.4 & 0.423, 0.576 
& 95.2 & 0.4844, 0.5152  
& 94.8  & 0.4921, 0.5077 
& 95.2\\ 
\hline 
Model 2a &  0.346, 0.652  
& 94.4 &   0.422, 0.577    
& 94.4 &   0.4844, 0.515    
& 94.8 &  0.4923, 0.5075  
& 95\\ 
\hline 
Model 2b & 0.344, 0.646  
& 95.4 & 0.422, 0.573  
& 94.8 & 0.4846, 0.5148  
& 95.2 & 0.4924, 0.5075  
& 94.8 \\ 
\hline 
SNR  & 22.36 & &  31.62 & & 70.71 & & 100 &\\
\hline
$s$ & 1 & & 1 & & 1 & & 1 & \\
\hline
\end{tabular} 
\end{table}
\end{center}

\begin{center}
\begin{table} [h!] 
\hspace{5 cm}\caption{ 95\% Adaptive confidence interval (ACI) and empirical \% (EP) of containing 
 $\tau_n = 1/2$ with  $\mu_{1k} = 0$,  $\mu_{2k} = \frac{1}{n^{3/8}}\ \forall k$ for Models 1 and 2a, $\mu_{2k} = \frac{p}{n^{3/8}}\ \forall k$ for Models 2b   and $p = \sqrt{n}$ based on $5000$ sample paths for each  $100$ replictions. $\eta_{kt} \stackrel{i.i.d.}{\sim} \mathcal{N}(0,1)$. SNR: $\frac{n}{p\gamma_p^{2}}||\mu_1-\mu_2||_2^2$, $s = \gamma_{p}^{-2}||\mu_1-\mu_2||_2^2$.} \label{table: 8} 
\vskip 2pt
\begin{tabular}{|m{1.9cm}|m{1.2cm}|m{1.2cm}|m{1.2cm}|m{1.2cm}|m{1.2cm}|m{1.2cm}|m{1.2cm}|m{1.2cm}|}
\hline 
 &  \multicolumn{2}{|c|}{$n=500$,  $p=23$} & \multicolumn{2}{|c|}{$n=1000$,  $p=32$} & \multicolumn{2}{|c|}{$n=5000$, $p=71$} & \multicolumn{2}{|c|}{$n=10000$, $p=100$} \\ 
\hline 
 & 95\% ACI & EP & 95\% ACI& EP &  95\% ACI & EP & 95\% ACI & EP\\
\hline
Model 1 & 0.2729, 0.7472 
& 93.2 &  0.3631, 0.6589   
& 93.8 &  0.4602, 0.5439   
& 94.4 &  0.4766, 0.5257   
& 94.8\\ 
\hline 
Model 2a & 0.2777,  0.7472  
& 94.2 &  0.3707,   0.6592     
& 95.8 &   0.4585, 0.5445       
& 94.6 &  0.4759, 0.5271       
& 95\\ 
\hline 
Model 2b &  0.2763,  0.7563  
& 93.8 &  0.3709,  0.6489     
& 94.6 & 0.4568,  0.5443      
& 95.6 &   0.4744, 0.5271      
& 95.2 \\ 
\hline 
SNR  & 4.73 & &  5.62 & & 8.41 & & 10 &\\
\hline
$s$ & 0.211 & & 0.178 & & 0.119 & & 0.1 & \\
\hline
\end{tabular} 
\end{table}
\end{center}

\begin{center}
\begin{table} [h!] 
\hspace{5 cm}\caption{ 95\% Adaptive confidence interval (ACI) and empirical \% (EP) of containing 
 $\tau_n = 1/2$ with  $\mu_{1k} = 0$,  $\mu_{2k} = \frac{1}{n^{1/4}}\ \forall k$ for Models 1 and 2a, $\mu_{2k} = \frac{p}{n^{1/4}}\ \forall k$ for Models 2b   and $p = \sqrt{n}$ based on $5000$ sample paths for each  $100$ replications. $\eta_{kt} \stackrel{i.i.d.}{\sim} \mathcal{N}(0,1)$. SNR: $\frac{n}{p\gamma_p^{2}}||\mu_1-\mu_2||_2^2$, $s = \gamma_{p}^{-2}||\mu_1-\mu_2||_2^2$.} \label{table: 9} 
\vskip 2pt
\begin{tabular}{|m{1.9cm}|m{1.2cm}|m{1.2cm}|m{1.2cm}|m{1.2cm}|m{1.2cm}|m{1.2cm}|m{1.2cm}|m{1.2cm}|}
\hline 
 &  \multicolumn{2}{|c|}{$n=500$,  $p=23$} & \multicolumn{2}{|c|}{$n=1000$,  $p=32$} & \multicolumn{2}{|c|}{$n=5000$, $p=71$} & \multicolumn{2}{|c|}{$n=10000$, $p=100$} \\ 
\hline 
 & 95\% ACI & EP & 95\% ACI& EP &  95\% ACI & EP & 95\% ACI & EP\\
\hline
Model 1 & 0.348, 0.648   
& 93.8 &  0.423, 0.573    
& 94.4 &   0.4844, 0.5152    
& 94.6 &   0.4924, 0.5076     
& 95.4\\ 
\hline 
Model 2a &   0.346, 0.652     
& 94.2 &    0.422, 0.573          
& 94.4 &    0.4848, 0.5152          
& 95.4 &   0.4921, 0.5073           
& 94.8\\ 
\hline 
Model 2b &  0.346, 0.648        
& 95.4 &   0.422, 0.574             
& 94.4 &   0.4846, 0.5150             
& 95.2 &   0.4924, 0.5075             
& 95.2 \\ 
\hline 
SNR  & 22.36 & &  31.62 & & 70.71 & & 100 & \\
\hline
$s$ & 1 & & 1 & & 1 & & 1 & \\
\hline
\end{tabular} 
\end{table}
\end{center}


 \section{Application to Financial Asset Prices} \label{sec: Dataanalysis}
 
The data set comprises of weakly stock prices for 75 financial US firms - banks, insurance companies and broker-dealers
covering the period  from 1/2/2001 to 12/27/2016, containing in total $n=825$ time points. The data were retrieved from
Wharton's Research Data Service (WRDS). First, log-returns $\log (p_{t,i}/p_{t-1,i})$ were calculated, where $p_{t,i}$ denotes the stock price at time $t$ for firm $i$. The final number of firms analyzed is 66 that had sufficient data throughout the time period considered. 

Since the developed methodology focuses on a single change point, we considered the following strategy, also employed
in \cite{billio2012econometric}. 
We consider 24-month-long rolling-windows, separated by 3 months, thus applying the procedure 60 times. For each window containing 104 time points, we compute the minimizer of the least squares criterion function given in (\ref{eqn: cpdefine}). The minimizers that appear at least twice among these 60 potential minimizers, are declared to be candidate change points and reported in Table \ref{Table: DA1}. There are 9 such change points.
To calculate confidence intervals, we partition the time axis into 9 windows, starting from the mid-point of the interval contain the $(i-1)$-th and $i$-th change points to the mid-point of the $i$-th and $(i+1)$-th change points. For each of these 9 windows, we employ the adaptive method given in Section \ref{sec: adap} for computing confidence intervals and present these intervals in Table \ref{Table: DA1}. 

The identified nine change points together with their calculated confidence intervals cover time periods where major shocks occurred, thus providing evidence of the validity of the proposed methodology. Specifically, the 2002 coincides with the popping of the Internet bubble, while the 2003 marks the turning point for the broader market;
the 2005 change point and its confidence interval cover a period of rapid consecutive interest increases from 2.5\% to 4\% by the Fed; the 2007 is a few weeks after the August market turmoil induced by global credit fears and a liquidity crunch that led the Fed to shore up substantially liquidity of the US Banking system;
the 2008 change point is a few weeks before the collapse of Lehman Brothers on 9/15/2008; the 2009 one is a couple of weeks
off the bottom of the stock market following the Great Recession; the 2011 change point is a the center of the time period
that led to significant downgrades of various European Union countries sovereign debt including Italy, Spain, Portugal and Greece; the 2012 one is related to the finalization of the second bailout package for Greece that provide a respite from
market turmoil; finally, the 2016 change point is associated with a crash in oil prices and concerns about the Chinese economy
that led to a sharp correction in the SP500 index of more than 5\% during January of that year. 

\begin{center}
\begin{table} [h!] 
\hspace{5 cm}\caption{Change points (CP) identified based on the method of this article and their $95\%$ adaptive confidence intervals (CI) using $5000$ replications. LCI: lower confidence interval, UCI: upper confidence interval. } \label{Table: DA1}
 \begin{tabular}{|c|c|c|c|c|c|c|c|c|}
\hline 
CP & 7/9/2002 & 2/25/2003 & 4/12/2005 & 9/25/2007 & 8/19/2008 & 2/17/2009  \\ 
\hline 
 $95\%$ LCI & 2/26/2002 & 12/3/2002  & 12/14/2004 & 6/12/2007 & 6/3/2008 & 12/30/2008  \\ 
\hline 
$95\%$ UCI & 11/5/2002  &  6/17/2003 & 10/18/2005  & 1/22/2008 & 11/4/2008 & 4/14/2009   \\ 
\hline 
\end{tabular}
\vskip 2pt
\hspace{4 cm}\begin{tabular}{|c|c|c|c|}
\hline 
CP & 9/20/2011 &  3/6/2012  & 1/26/2016 \\ 
\hline 
$95\%$ LCI & 3/29/2011  & 1/3/2012 & 9/22/2015 \\ 
\hline 
$95\%$ UCI & 12/20/2011 & 8/21/2012 & 6/21/2016 \\ 
\hline 
\end{tabular} 
\end{table}
\end{center}


Further, we used the methodology based on double CUSUM statistics introduced in \citet{cho2016change} and the resulting candidate change points are given in Table \ref{table: chocp}. Note that the there is a certain degree of concordance between the results in Table \ref{Table: DA1} and those in Table \ref{table: chocp}. For example, the 
2nd, 4th, 6th, 8th, 10th and 11th change points in Table \ref{table: chocp} are very close to first 6 change points in Table \ref{Table: DA1}. However, the latter method does not declare any change points after 2009, which is surprising given the turmoil in world financial markets induced by the sovereign debt crisis in Europe (2011-2012) and the significant market correction in January 2016 due to a crash in oil prices and concerns about China's economy that resulted in a 23\% decline in the major stock indices in China.
\begin{center}
\begin{table} [h!]
\caption{Change points detected by the method of double CUSUM statistics in \cite{cho2016change}.} \label{table: chocp}
\hspace{1.5cm}\begin{tabular}{|c|c|c|c|c|c|}
 \hline 
10/2/2001 & 10/2/2001 & 10/1/2002 & 3/4/2003 & 7/15/2003 & 10/11/2005 \\
\hline 
\end{tabular} 
\vskip 1pt
\hspace{1.4cm} \begin{tabular}{|c|c|c|c|c|c|}
 \hline 
 2/13/2007 & 7/10/2007 & 2/19/2008 & 9/23/2008 & 1/27/2009 & 8/18/2009 \\ 
\hline 
\end{tabular} 
\end{table}
\end{center}

\begin{center}
\begin{figure}[htp] 
\includegraphics[height=90mm, width=154mm]{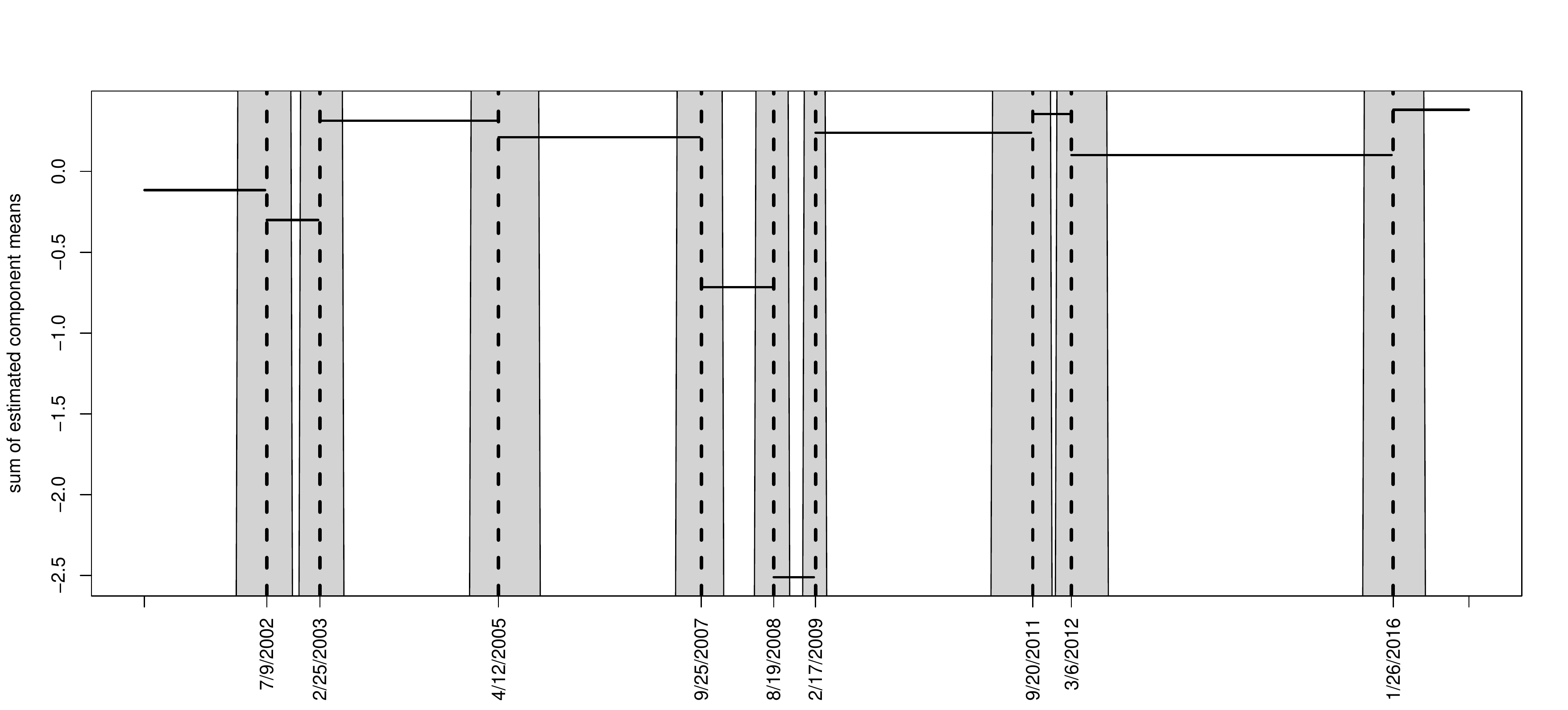} 
\caption{ \label{fig: 1} 
Change points are indicated by dashed vertical lines and their $95\%$ confidence intervals are presented by  grey vertical bands, mean between the change points are given by  horizontal lines.}
\end{figure} 
\end{center}

\bibliographystyle{abbrvnat}

\section{Appendix: Additional technical details} \label{sec: appA}

\subsection{Some consequences of Theorems \ref{thm: conrate} and \ref{thm: asympdist}} \label{subsec: cor}
In this section,  we provide some immediate consequences of Theorems \ref{thm: conrate} and \ref{thm: asympdist} in special cases such as independence across time and/or component-wise. 

\subsubsection{Independence across panels and dependence across time} \label{cor: panelind} 
Consider the model in (\ref{eqn:model}) with independence across $k$  and 
\begin{eqnarray}
A_{j,p} = \text{Diag}\{a_{j,k}:\ 1\leq k \leq p\}\ \ \text{and}\ \ \tilde{\gamma}_p := \sup_{1 \leq k \leq p} \sum_{j=0}^{\infty}  |a_{j,k}| = O(1).  \label{eqn: panelind}
\end{eqnarray}
Then $n  ||\mu_1  - \mu_2||_2^2 (\hat{\tau}_n -\tau_n) = O_{\text{P}}(1)$
holds under  \textbf{(SNR*)} $\frac{n}{p}  ||\mu_1-\mu_2||_2^2 \to \infty$.  Note that SNR* is weaker than  (SNR). \vskip 2pt
\noindent  \textbf{(a)} Moreover, if (SNR*) holds and $||\mu_1-\mu_2||_2 \to \infty$, then $P(\hat{\tau}_n =\tau_n) \to 1$. 
\vskip 2pt
\noindent Note that in Model (\ref{eqn: panelind}), population autocovariance of order $u$ is $$\Gamma_u = \sum_{j=0}^{\infty}A_jA_{j+u}  = \text{Diag}\left\{\sum_{j=0}^{\infty} a_{j,k}a_{j+u,k}:\ 1 \leq k \leq p \right\}.$$
\noindent Thus the quadratic form 
\begin{eqnarray}
(\mu_1-\mu_2)^\prime \Gamma_{t_1-t_2} (\mu_1-\mu_2) = \sum_{k=1}^{p} \bigg[ (\mu_{1k}-\mu_{2k})^2 \sum_{j=0}^{\infty} a_{j,k} a_{j+t_2-t_1,k}\bigg]. \nonumber 
\end{eqnarray}
Hence, $\sigma_{h_1,h_2}$ becomes
\begin{eqnarray}
\sigma_{h_1,h_2}^* &:=& \lim \sum_{\stackrel{t_i = \bigg[\frac{h_i}{||\mu_1-\mu_2||_2^2}\bigg]\wedge 0 +1}{i=1,2}}^{\bigg[\frac{ h_i}{||\mu_1-\mu_2||_2^2}\bigg]\vee 0} \sum_{k=1}^{p} \bigg[ (\mu_{1k}-\mu_{2k})^2 \sum_{j=0}^{\infty} a_{j,k} a_{j+t_2-t_1,k}\bigg].\nonumber 
\end{eqnarray}
In Model (\ref{eqn: panelind}), $((\sigma_{h_i,h_j}^*))_{1 \leq i,j \leq r}$ is asymptotic variance-covariance matrix of a finite dimensional distribution of the limiting Gaussian process when $||\mu_1-\mu_2||_2 \to 0$. Therefore the following statement is true
\vskip 2pt
\noindent \textbf{(b)} Suppose (SNR*) and (A3)  hold, $||\mu_1-\mu_2||_2 \to 0$, $((\sigma_{h_i,h_j}^*))_{1 \leq i,j \leq r}$ exists and is positive definite 
for all $h_i,h_j\in \mathbb{R}$ and $r \geq 1$.  Then the conclusion of Theorem \ref{thm: asympdist}(b) holds with $\sigma_{h_1,h_2}$ replaced by $\sigma_{h_1,h_2}^*$. 

\noindent Recall the partition of $\{1,2,\ldots, p(n)\}$ into $\mathcal{K}_0$ and $\mathcal{K}_n$ from (\ref{eqn: partition}). In Regime $||\mu_1-\mu_2||_2^2 \to c>0$,  as discussed after (\ref{eqn: partition}), we need to treat $\{X_{kt}: k \in \mathcal{K}_0, 1 \leq t \leq n\}$ and $\{X_{kt}: k \in \mathcal{K}_n, 1\leq t \leq n\}$ separately. Clearly, $\mathcal{K}_0$ is a finite set.  Moreover, in Model (\ref{eqn: panelind}), $X_{kt} = \mu_1 I(t \leq [n\tau_n]) + \mu_2 I(t > [n\tau_n]) + \varepsilon_{kt} = \mu_1 I(t \leq [n\tau_n]) + \mu_2 I(t > [n\tau_n]) +  \sum_{j=0}^{\infty} a_{j,k}\eta_{k(t-j)}\ \forall k,t$. As $\{\eta_{kt}\}$ are independent across both $k$ and $t$, $\{X_{kt}\}$ are also independent across $k \in \mathcal{K}_0$  and converge weakly if (A4), (A5) given before Theorem \ref{thm: asympdist} and (A8*) stated below hold. 
\vskip 5pt
\noindent \textbf{(A8*)} $(\varepsilon_{kt_1},\varepsilon_{kt_2},\ldots,\varepsilon_{kt_r}) \stackrel{\mathcal{D}}{\to} (\varepsilon_{kt_1}^{*},\varepsilon_{kt_2}^*,\ldots,\varepsilon_{kt_r}^*)\ \forall k \in \mathcal{K}_0$, $t_1,t_2,\ldots,t_r \in \mathbb{Z}$ and $r\geq 1$. 
$\mu_{ik} \to \mu_{ik}^{*}$ $\forall k \in \mathcal{K}_0$,  $i=1,2$.
\vskip 5pt
\noindent As we have independence across components, $\{\varepsilon_{kt}^*: k \in \mathcal{K}_0\}$ and $\{X_{kt}: k \in \mathcal{K}_n\}$ are independent.  In this case, $c_1$, $\tilde{\sigma}_{(k,0),(t_1,t_2)}$ and $\tilde{\sigma}_{(0,0),(t_1,t_2)}$ reduces to 
\begin{eqnarray}
c_1^* &:=& \lim \sum_{k \in \mathcal{K}_n} (\mu_{1k} - \mu_{2k})^2,\hspace{0.5 cm} \tilde{\sigma}_{(k,0),(t_1,t_2)}=0\ \forall k \in \mathcal{K}_0,\nonumber \\
\tilde{\sigma}_{(t_1,t_2)}^* &:=& \lim \sum_{k \in \mathcal{K}_n} (\mu_{1k} - \mu_{2k})^2 \sum_{j=0}^{\infty} a_{j,k}a_{j+t_2-t_1,k}\ \forall t_1,t_2\in \mathbb{Z}. \nonumber 
\end{eqnarray}
\noindent Limit of the random part involving $\{X_{kt}: k \in \mathcal{K}_n, t \geq 1\}$ is a Gaussian process on $\mathbb{Z}$ with covariances $\{\tilde{\sigma}_{(t_1,t_2)}^*: t_1,t_2\in \mathbb{Z}\}$. Existence of this limits and non-degeneracy of the Gaussian process are guaranteed by (A6*) and (A7*) stated below. 
\vskip 2pt
\noindent \textbf{(A6*)} $c_1^*$, $\tilde{\sigma}_{(t_1,t_2)}^*$ exists for all $t_1, t_2 \in \mathbb{Z}$. 
\vskip 5pt
\noindent \textbf{(A7*)}  $((\tilde{\sigma}_{(t_i,t_j)}^*))_{1\leq i,j \leq r}$ is positive definite  for all  $t_1,t_2,\ldots,t_r \in \mathbb{Z}$ and $r \geq 1$. 
\vskip 5pt
\noindent (A9*) is the analogue of (A9) and is required to establish asymptotic normality of the random part involving $\{X_{kt}: k \in \mathcal{K}_n, t \geq 1\}$. 
\vskip 5pt
\noindent \textbf{(A9*)}  
$\sup_{k \in \mathcal{K}_n} |\mu_{1k}-\mu_{2k}| \to 0$.
\vskip 5pt
\noindent Now the following statement is true. 
\vskip 10pt
\noindent  \textbf{(c)} Suppose (SNR*) and (A4), (A5), (A6*)-(A9*) hold and $||\mu_1-\mu_2||_2 \to c>0$, then
\begin{eqnarray}
n(\hat{\tau}_n-\tau_n) \stackrel{\mathcal{D}}{\to} \arg\max_{h \in \mathbb{Z}} (-0.5c_1^* |h| +  \sum_{t=0\wedge h}^{0 \vee h} (\tilde{W}_t^* + \tilde{A}_t^*)) \nonumber
\end{eqnarray}
where for each $t_1,t_2, \ldots,t_r \in \mathbb{Z}$  and $r \geq 1$,
\begin{eqnarray}
&&(\tilde{W}_{t_1}^{*},\tilde{W}_{t_2}^{*},\ldots, \tilde{W}_{t_r}^{*}) \sim \mathcal{N}_r (0, ((\tilde{\sigma}_{(t_i,t_j)}^*))_{1 \leq i,j \leq r} ),\ \ \
\nonumber \\
&& \tilde{A}_t^* = \frac{1}{2}\sum_{k \in \mathcal{K}_0} \bigg[ (\varepsilon_{kt}^* +b_{kt} -\mu_{2k}^*)^2-(\varepsilon_{kt}^*+b_{kt}-\mu_{1k}^*)^2 \bigg],  
\hspace{0.5 cm} b_{kt} = \begin{cases} \mu_{1k}^{*}\ \ \text{if $t \leq n\tau^{*}$} \nonumber \\
\mu_{2k}^{*}\ \ \ \text{if $t > n\tau^{*}$}.
\end{cases} 
\end{eqnarray}

\begin{remark}
Assumption (A8*) is weaker than (A8). If further $\frac{\sup_{k \in \mathcal{K}_0} \sum_{j=0}^{\infty}a_{j,k}}{\inf_{k \in \mathcal{K}_0} \sum_{j=0}^{\infty}a_{j,k}} = o(p^{1/6})$ holds in Model (\ref{eqn: panelind}),   then (A8) satisfies,  $\tilde{\sigma}_{(k_1,k_2),(t_1,t_2)} = (\sum_{j=0}^{\infty}a_{j,k}a_{j+t_i-t_j,k})I(k_1=k_2=k)$ and $\{\varepsilon_{kt}^{*}\}$ are Gaussian. This implies $\{\varepsilon_{kt}^*\}$ are independent across $k \in \mathcal{K}_0$ and  $(\varepsilon_{kt_1}^*,\varepsilon_{kt_2}^*,\ldots,\varepsilon_{kt_r}^*)  \sim \mathcal{N}_{r}(0, ((\sum_{j=0}^{\infty}a_{j,k}a_{j+t_i-t_j,k}))_{1\leq i,j \leq r})$ for all $t_1,t_2,\ldots,t_r \in \mathbb{Z}$, $r\geq 1$. 
\end{remark}

\subsubsection{Independence across panels and time} \label{cor: paneltimeind}
Consider the model in (\ref{eqn:model}) with independence across $k$  and 
\begin{eqnarray}
A_0 = \text{Diag}\{a_{0,k}:\ 1\leq k \leq p\},\ \ A_j =0\ \forall j \neq 0.   \nonumber
\end{eqnarray}
That is
\begin{eqnarray}
X_{kt} = \mu_1 I(t \leq [n\tau_n]) + \mu_2 I(t > [n\tau_n]) + a_{0,k}\eta_{kt}\ \ \forall k,t. \label{eqn: paneltimeindp}
\end{eqnarray}
Further suppose  there is $\epsilon, C>0$ such that $\epsilon < \inf_{k} |a_{0,k}| \leq \sup_{k}|a_{0,k}| < C$. Then $\gamma_p = O(1)$. 
\vskip 2pt
\noindent Recall SNR* in Section \ref{cor: panelind}.  Then $n  ||\mu_1  - \mu_2||_2^2 (\hat{\tau}_n -\tau_n) = O_{\text{P}}(1)$ holds under SNR*. 
\vskip 2pt
\noindent \textbf{(a)} Moreover, if SNR* holds and $||\mu_1-\mu_2||_2 \to \infty$, then $P(\hat{\tau}_n =\tau_n) \to 1$. 
\vskip 2pt
\noindent Now consider regime $||\mu_1-\mu_2||_2 \to 0$. In this case, population autocovariance $$\Gamma_0 = \text{Diag}\{a_{0,k}^2: 1\leq k \leq p\}\ \ \text{and}\ \ \ \Gamma_u =0\ \forall u \neq 0.$$
\noindent The quadratic form becomes
\begin{eqnarray}
(\mu_1-\mu_2)^\prime \Gamma_{t_1-t_2} (\mu_1-\mu_2) = \bigg[\sum_{k=1}^{p}  (\mu_{1k}-\mu_{2k})^2  a_{0,k}^2\bigg]I(t_1=t_2)\ \ \forall t_1,t_2. \nonumber 
\end{eqnarray}
\noindent Define
\begin{eqnarray}
\bar{\sigma}^2  &=& \lim \frac{ \sum_{k=1}^{p} (\mu_{1k}-\mu_{2k})^2 a_{0,k}^2 }{||\mu_1-\mu_2||_2^2}.\nonumber 
\end{eqnarray}
Thus $\sigma_{h_1,h_2} = \min(|h_1|,|h_2|) \bar{\sigma}^2$ for all $h_1,h_2 \in \mathbb{R}$ and the following statement is true. 
\vskip 2pt
\noindent \textbf{(b)} If (SNR*)  holds, $||\mu_1-\mu_2||_2 \to 0$, $\bar{\sigma}$ exists, then 
\begin{eqnarray}
n||\mu_1-\mu_2||_2^2 (\hat{\tau}_n-\tau_n) \stackrel{\mathcal{D}}{\to} \arg\max_{h \in \mathbb{R}} (-0.5|h| + \bar{\sigma} B_h) \nonumber 
\end{eqnarray}
where $B_h$ is the standard Brownian motion. 

\noindent Now consider the regime $||\mu_1-\mu_2||_2^2 \to c>0$.  Recall that $c_1^* = \lim \sum_{k \in \mathcal{K}_n} (\mu_{1k} - \mu_{2k})^2$ and Assumptions (A4), (A5) and (A9*). Note that Model (\ref{eqn: paneltimeindp}) is a special case of Model (\ref{eqn: panelind}).   Thus reason of considering these assumptions are same as discussed in Section \ref{cor: panelind}.  
As $\{X_{kt}\}$ are independent across both $k$ and $t$, (A8*) in Section \ref{cor: panelind} reduces to (A8**) stated below. 
\vskip 5pt
\noindent \textbf{(A8**)} $\eta_{kt}\stackrel{\mathcal{D}}{\to} \eta_{kt}^{*}$, $a_{0,k} \to a_{0,k}^{*}$ for all $k \in \mathcal{K}_0$ and $t \in \mathbb{Z}$. 
$\mu_{ik} \to \mu_{ik}^{*}$ $\forall k \in \mathcal{K}_0$,  $i=1,2$.
\vskip 5pt
\noindent Define, 
\begin{eqnarray}
\tilde{\sigma}^2 &=& \lim \sum_{k \in \mathcal{K}_n} (\mu_{1k} - \mu_{2k})^2 a_{0,k}^2. \nonumber 
\end{eqnarray}
Thus for Model (\ref{eqn: panelind}), $\tilde{\sigma}_{(0,0),(t_1,t_2)} = \tilde{\sigma}^2 I(t_1=t_2)$, $\tilde{\sigma}_{(k,0),(t_1,t_2)} = 0$ for all $k \in \mathcal{K}_0$ and $t_1,t_2\in \mathbb{Z}$. Hence (A7*) holds automatically and (A6*) reduces to (A6**) as stated below. 
\vskip2pt
\noindent \textbf{(A6**)} $c_1$ and $\tilde{\sigma}^2$ exists. 
\vskip 5pt
\noindent Thus the following statement is true.
\vskip 10pt
\noindent  \textbf{(c)} Suppose (SNR*) and (A4), (A5), (A6**), (A8**) and (A9*) hold and $||\mu_1-\mu_2||_2 \to c>0$, then
\begin{eqnarray}
n(\hat{\tau}_n-\tau_n) \stackrel{\mathcal{D}}{\to} \arg\max_{h \in \mathbb{Z}} (-0.5c_1^{*} |h| +  \sum_{t=0\wedge h}^{0 \vee h} ({W}_t + {A}_t)) \nonumber
\end{eqnarray}
where $W_t \stackrel{\text{i.i.d.}}{\sim} \mathcal{N}(0,1)$ and 
\begin{eqnarray}
&& {A}_t = \sum_{k \in \mathcal{K}_0} \bigg[ (\varepsilon_{kt} +b_{kt} -\mu_{2k}^*)^2-(\varepsilon_{kt}+b_{kt}-\mu_{1k}^*)^2 \bigg],  
\hspace{0.5 cm} b_{kt} = \begin{cases} \mu_{1k}^{*}\ \ \text{if $t \leq n\tau^{*}$} \nonumber \\
\mu_{2k}^{*}\ \ \ \text{if $t > n\tau^{*}$}.
\end{cases} 
\end{eqnarray}

\subsection{Other examples} \label{subsec: otherexamples}
\begin{Example} \label{example: DD}
\noindent \textbf{Diagonally dominant coefficient matrices}. A symmetric matrix $M = ((m_{ij}))$ is said to be diagonally dominant if $\sum_{i: i \neq j} |m_{ij}| < |m_{jj}|\ \forall j \geq 1$.  If the coefficient matrices are sparse, diagonally dominance can often be a reasonable assumption. In this case, for all $j \geq 0$,  the covariances between $\varepsilon_{kt}$ and $\eta_{k^\prime t+j}$ are much smaller for $k \neq k^\prime$ compared to $k = k^\prime$. 
Suppose $\{A_{j,p}\}$ are all symmetric and diagonally dominant matrices. We know that $||A_{j,p}||_2 \leq ||A_{j,p}||_{(1,1)}$. Now $||A_{j,p}||_{(1,1)} = \max_{1 \leq i \leq p} \sum_{l=1}^{p}|A_{j,p}(i,l)| \leq \max_{1\leq i\leq p} 2|A_j(i,i)| \leq  2||A_j||_{2}$. Thus, $\gamma_p$ and $\beta_p$ are of the same order. Suppose (a)-(d) in Proposition \ref{prop: 1} holds.  For such choices of coefficient matrices, (A1) and (A6) are satisfied  if (k) and (l), stated after Propositions \ref{prop: 1} and \ref{prop: 2}, are satisfied respectively.
For this example, no other simplification is possible for (SNR),  (A2),  (A7) and for the first condition of (A8).  
\end{Example}

\begin{Example} \label{example: VAR}
\textbf{VAR process}:  A Vector Autoregressive process of order $r$ (VAR$(r)$) is given by
\begin{eqnarray} \label{eqn: AR}
\varepsilon_{t,p(n)}^{(n)} + \sum_{j=1}^{r} \Psi_{j,p(n)} \varepsilon_{t-j,p(n)}^{(n)}  = \eta_{t,p(n)}. \nonumber 
\end{eqnarray}
where the $p \times p$ nested matrices $\{\Psi_{i,p(n)}: 0 \leq i \leq r\}$ are the model parameter matrices and $\{\eta_t\}$ are as described after (\ref{eqn:model}). Moreover, if
for some $\epsilon >0$, 
\begin{eqnarray} \label{eqn: causal}
I_{p(n)} + \sum_{j=1}^{r} \Psi_{j,p(n)} z^{j} \neq 0\ \forall\ z \in \mathbb{C}\ \text{such that}\ |z| < 1+\epsilon, 
\end{eqnarray}
then (\ref{eqn: AR}) can be represented as (\ref{eqn:model}) with $A_{0,p(n)} = I_{p(n)}$, $A_{j,p(n)} = \sum_{i=1}^{j} \Psi_{i,p(n)} A_{j-i,p(n)}$.  Further, suppose $\sup_{p} ||\Psi_{j,p}||_{(1,1)}  <\infty$. Then, one can easily show that $\sup_{p} ||A_j||_{(1,1)} \leq C\theta^j$ for some $C>0$ and $0 <\theta <1$. This implies $\gamma_p, \beta_p = O(1)$. Also, as $A_{0,p(n)} = I_{p(n)}$, $\gamma_p$ is bounded away from $0$. Hence, for Theorems \ref{thm: conrate} and \ref{thm: asympdist}, we need (SNR*) $\frac{n}{p} ||\mu_1-\mu_2||_2^2 \to \infty$, instead of (SNR). Also, (A1) and (A6) hold if (k) and (l1), stated after  Propositions \ref{prop: 1} and \ref{prop: 2},  are satisfied respectively. Again, suppose the smallest eigenvalues of $\Psi_{j,p}$, $1 \leq j \leq r$,   are bounded away from $0$ with respect to both $j$ and $p$. This implies that the smallest eigenvalue of $\Gamma_0$ is bounded away from $0$ and (A2), (A7) and the first condition of (A8) hold. 
\end{Example}

\begin{Example} \label{example: dominant}
\noindent \textbf{Coefficient matrices dominated by separable cross-sectional and time dependence structure}.  One may think that the structures in Examples \ref{example: 1}  and \ref{example: polydep}  are restrictive. In this example, we consider a significantly wider class of  coefficient matrices  which are dominated by separable cross-sectional and time dependence structure. In other words, $\sup_{j \geq 0} |A_j(k,l)|  \leq a_j b(k,l)\ \forall k,l$ where $A_j(k,l)$ be the $(k,l)$-th element of $A_j$. Define a sequence of nested matrices $\{B_p\}$ such that $B_p = ((b(k,l)))_{1\leq k,l \leq p}$. Then (a) and (d) in Example \ref{example: 1} hold. Also suppose that $\inf_{p}||A_j||_2 >0$ for at least one $j \geq 0$. This implies that $\gamma_p$ is bounded away from $0$. Therefore, in Propositions \ref{prop: 1} and \ref{prop: 2}, we  need to replace $\gamma_p$ and  $\beta_p$ by $1$ and $||B_p||_{(1,1)}$, respectively. Consequently,  (A1) and (A6) hold if (b)-(j) in  Propositions \ref{prop: 1} and \ref{prop: 2} are satisfied after replacing $\gamma_p$ and $\beta_p$ by $1$ and $||B_p||_{(1,1)}$, respectively.  Finally,  (A2) and (A7) are satisfied if the smallest singular value of $B_p$ is bounded away from $0$ and $a_j>0$ for atleast one $j \geq 0$.
\end{Example}



\pagebreak

\section{Supplementary material: Proofs} \label{sec: proof}

\subsection{Useful lemmas}
Following two lemmas  quoted from \cite{Wellner1996empirical} are needed to prove Theorems \ref{thm: conrate} and \ref{thm: asympdist}.
\begin{lemma} \label{lem: wvan1}
For each $n$, let $\mathbb{M}_n$ and $\tilde{\mathbb{M}}_n$ be stochastic processes indexed by a set $\mathcal{T}$. Let $\tau_n\ \text{(possibly random)} \in \mathcal{T}_n \subset \mathcal{T}$ and 
$d_n(b,\tau_n)$ be a map (possibly random) from $\mathcal{T}$ to $[0,\infty)$. Suppose that for every large $n$ and $\delta \in (0,\infty)$
\begin{eqnarray}
&& \sup_{\delta/2 < d_n(b,\tau_n) < \delta,\  b \in \mathcal{T}} (\tilde{\mathbb{M}}_n(b) - \tilde{\mathbb{M}}_n(\tau_n)) \leq -C\delta^2, \label{eqn: lemcon1} \\
&& E\sup_{\delta/2 < d_n(b,\tau_n) < \delta,\  b \in \mathcal{T}}  \sqrt{n} |\mathbb{M}_n(b) - \mathbb{M}_n(\tau_n) - (\tilde{\mathbb{M}}_n(b) - \tilde{\mathbb{M}}_n(\tau_n))| \leq  C\phi_{n}(\delta), \label{eqn: lemcon2}
\end{eqnarray}
for some $C>0$ and for function $\phi_n$ such that $\delta^{-\alpha}\phi_n(\delta)$ is decreasing in $\delta$ on $(0,\infty)$ for some $\alpha <2$. Let $r_n$ satisfy
\begin{eqnarray} \label{eqn: lemrn}
r_n^2  \phi(r_n^{-1}) \leq \sqrt{n}\ \ \text{for every  $n$}.
\end{eqnarray}
Further, suppose that the sequence $\{\hat{\tau}_n\}$ takes its values in $\mathcal{T}_n$ and 
satisfies $\mathbb{M}_n(\hat{\tau}_n) \geq \mathbb{M}_n(\tau_n) - O_P (r_n^{-2})$ for large enough $n$. Then,
$r_n d_{n}(\hat{\tau}_n,\tau_n) = O_P (1)$.
\end{lemma}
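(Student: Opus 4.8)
The plan is to carry out the ``peeling''/shelling argument that underlies the rate theorems in \cite{Wellner1996empirical}. Fix $\eta>0$. Since $r_n d_n(\hat\tau_n,\tau_n)\geq 0$, it suffices to exhibit an integer $M$ and an index $N$, neither depending on $n$, with $P\big(r_n d_n(\hat\tau_n,\tau_n)>2^M\big)<\eta$ for every $n\geq N$; this is exactly the assertion $r_n d_n(\hat\tau_n,\tau_n)=O_{\text{P}}(1)$.

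First I would peel off the event on which approximate maximality fails. Because $\mathbb{M}_n(\hat\tau_n)\geq \mathbb{M}_n(\tau_n)-O_{\text{P}}(r_n^{-2})$, there are $R>0$ and $N_1$ such that $P(A_n^{c})\leq \eta/2$ for all $n\geq N_1$, where $A_n:=\{\mathbb{M}_n(\hat\tau_n)\geq \mathbb{M}_n(\tau_n)-Rr_n^{-2}\}$; it is then enough to control $P\big(\{r_n d_n(\hat\tau_n,\tau_n)>2^M\}\cap A_n\big)$. Partition $\{b\in\mathcal{T}:r_n d_n(b,\tau_n)>2^M\}$ into the dyadic shells $S_{j,n}:=\{b:2^{j-1}<r_n d_n(b,\tau_n)\leq 2^j\}$, $j>M$. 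Applying (\ref{eqn: lemcon1}) with $\delta=2^j/r_n$ gives, on $S_{j,n}$, the bound $\tilde{\mathbb{M}}_n(b)-\tilde{\mathbb{M}}_n(\tau_n)\leq -C\,2^{2j}r_n^{-2}$. Writing $Z_n(b):=\mathbb{M}_n(b)-\mathbb{M}_n(\tau_n)-\big(\tilde{\mathbb{M}}_n(b)-\tilde{\mathbb{M}}_n(\tau_n)\big)$, on the event $\{\hat\tau_n\in S_{j,n}\}\cap A_n$ we get
\[
-Rr_n^{-2}\ \leq\ \sup_{b\in S_{j,n}}\big(\mathbb{M}_n(b)-\mathbb{M}_n(\tau_n)\big)\ \leq\ -C\,2^{2j}r_n^{-2}+\sup_{b\in S_{j,n}}|Z_n(b)|,
\]
so, choosing $M$ large enough that $R\leq \tfrac{C}{2}\,2^{2(M+1)}$, on this event $\sup_{b\in S_{j,n}}|Z_n(b)|\geq \tfrac{C}{2}\,2^{2j}r_n^{-2}$.

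I would then finish with a union bound and Markov's inequality. Since $S_{j,n}$ is (essentially) the annulus $\{2^{j-1}/r_n<d_n(b,\tau_n)<2^j/r_n\}$, (\ref{eqn: lemcon2}) with $\delta=2^j/r_n$ yields $E\sup_{b\in S_{j,n}}|Z_n(b)|\leq C\,\phi_n(2^j/r_n)/\sqrt n$, whence
\[
P\big(\{r_n d_n(\hat\tau_n,\tau_n)>2^M\}\cap A_n\big)\ \leq\ \sum_{j>M}\frac{E\sup_{b\in S_{j,n}}|Z_n(b)|}{\tfrac{C}{2}\,2^{2j}r_n^{-2}}\ \leq\ \sum_{j>M}\frac{2\,r_n^{2}\,\phi_n(2^j/r_n)}{2^{2j}\sqrt n}.
\]
Now the monotonicity of $\delta\mapsto\delta^{-\alpha}\phi_n(\delta)$ gives $\phi_n(2^j/r_n)\leq 2^{j\alpha}\phi_n(1/r_n)$ for $j\geq 0$, and the defining property (\ref{eqn: lemrn}) of $r_n$, namely $r_n^{2}\phi_n(1/r_n)\leq\sqrt n$, collapses the last sum to $\sum_{j>M}2\cdot 2^{j(\alpha-2)}$, a convergent geometric series (because $\alpha<2$) equal to a constant multiple of $2^{M(\alpha-2)}$, which tends to $0$ as $M\to\infty$. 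Enlarging $M$ once more so that this is below $\eta/2$, and taking $N=\max(N_1,N_0)$ with $N_0$ a threshold beyond which (\ref{eqn: lemcon1})--(\ref{eqn: lemrn}) hold, gives $P\big(r_n d_n(\hat\tau_n,\tau_n)>2^M\big)<\eta$ for all $n\geq N$, as required.

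The step I expect to require the most care is \emph{uniformity in $n$}: the constant $C$, the exponent $\alpha$, and the inequality $r_n^{2}\phi_n(1/r_n)\leq\sqrt n$ are all $n$-free (the last by hypothesis), so the final estimate $2\sum_{j>M}2^{j(\alpha-2)}$ is $n$-free, which is exactly what permits a single pair $(M,N)$ to work for all large $n$. Two routine points also need a word: the dyadic shell endpoints $\{d_n(b,\tau_n)=2^j/r_n\}$ are absorbed by applying (\ref{eqn: lemcon1})--(\ref{eqn: lemcon2}) at radius $2^j(1+\varepsilon)/r_n$ and letting $\varepsilon\downarrow 0$; and the possible randomness of $\tau_n$ and of the map $d_n$ causes no difficulty, since every displayed inequality above is a pathwise inequality between random variables, taken before any expectation or probability.
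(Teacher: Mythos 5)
Your peeling argument is correct: the dyadic shells, the drift bound from (\ref{eqn: lemcon1}), the shell-wise maximal inequality (\ref{eqn: lemcon2}) combined with Markov's inequality and a union bound, and the geometric series obtained from the monotonicity of $\delta^{-\alpha}\phi_n(\delta)$ together with $r_n^2\phi_n(r_n^{-1})\le\sqrt n$ is exactly the standard proof of the rate theorem. The paper itself gives no proof of this lemma --- it is quoted from \citet{Wellner1996empirical} (Theorem 3.2.5 there), and your argument is essentially that reference's proof, so there is nothing to compare beyond noting the match. One small quibble: your fix for the shell endpoints is stated too quickly --- applying the hypotheses at radius $2^j(1+\varepsilon)/r_n$ covers the sphere $\{r_n d_n=2^j\}$ but not the lower portion $(2^{j-1},2^{j-1}(1+\varepsilon)]$ of your closed-above shell, so you need two applications of (\ref{eqn: lemcon1})--(\ref{eqn: lemcon2}) per shell (at radii $2^j/r_n$ and $2^j(1+\varepsilon)/r_n$, with comparable bounds since $\phi_n(2^j(1+\varepsilon)/r_n)\le(1+\varepsilon)^{\alpha}2^{j\alpha}\phi_n(1/r_n)$); this changes nothing in the conclusion.
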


\begin{lemma}  \label{lem: wvandis1}
Let $\mathbb{M}_n$ and $\mathbb{M}$ be two stochastic processes indexed by a metric space $\mathcal{T}$,
such that $\mathbb{M}_n \Rightarrow \mathbb{M}$ in $l^{\infty}(\mathcal{C})$ for every compact set $\mathcal{C} \subset \mathcal{T}$ i.e.,
\begin{align}
\sup_{h \in \mathcal{C}} |\mathbb{M}_n(h) - \mathbb{M}(h)| \stackrel{P}{\to} 0.
\end{align} 
Suppose that almost all sample paths $h \to \mathbb{M}(h)$ are upper semi-continuous and possess a unique maximum at a (random) point $\hat{h}$, which as a random map in $\mathcal{T}$ is tight. If the sequence $\hat{h}_n$ is uniformly tight and satisfies $\mathbb{M}_n(\hat{h}_n) \geq \sup_{n} \mathbb{M}_n(h) - o_{P}(1)$, then $\hat{h}_n \stackrel{\mathcal{D}}{\to} \hat{h}$ in $\mathcal{T}$.
\end{lemma}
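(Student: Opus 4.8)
The plan is to obtain this as an instance of the argmax continuous mapping theorem for processes in $\ell^\infty$ (the statement is quoted from \citet{Wellner1996empirical}, where its proof is given), the one wrinkle being that the index space $\mathcal{T}$ need not be compact, which is precisely why the two tightness hypotheses are imposed. First I would fix $\varepsilon>0$ and use uniform tightness of $\{\hat h_n\}$ together with tightness of $\hat h$ to pick a compact $K=K_\varepsilon\subset\mathcal{T}$ with $\inf_n P(\hat h_n\in K)\ge 1-\varepsilon$ and $P(\hat h\in K^\circ)\ge 1-\varepsilon$; after a harmless inflation of $K$ we may assume the maximizer of the limit process lies in the interior $K^\circ$, which is needed so that, on a high-probability event, the maximizer of $\mathbb{M}$ over $K$ coincides with its global maximizer $\hat h$.

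Next, the hypothesis $\mathbb{M}_n\Rightarrow\mathbb{M}$ in $\ell^\infty(\mathcal{C})$ for every compact $\mathcal{C}$ gives in particular $\mathbb{M}_n|_K\Rightarrow\mathbb{M}|_K$ in $\ell^\infty(K)$. The key deterministic input is that the functional sending $z\in\ell^\infty(K)$ to its maximizer over $K$ is continuous, in the sense demanded by the extended continuous mapping theorem, at every $z$ that is upper semicontinuous on $K$ with a unique maximizer: if $z_m\to z$ uniformly on $K$ and $h_m$ is a near-maximizer of $z_m$ over $K$, then compactness of $K$ together with the well-separatedness of the maximum forced by upper semicontinuity and uniqueness forces $h_m$ to converge to the maximizer of $z$. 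Since almost all sample paths of $\mathbb{M}$ are upper semicontinuous with unique maximizer $\hat h$, on $\{\hat h\in K^\circ\}$ the maximizer of $\mathbb{M}|_K$ is exactly $\hat h$, and the extended continuous mapping theorem yields that the maximizer of $\mathbb{M}_n|_K$ converges in distribution to the maximizer of $\mathbb{M}|_K$, which equals $\hat h$ off an event of probability at most $\varepsilon$.

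It remains to transfer this to $\hat h_n$ itself. On $\{\hat h_n\in K\}$ the global near-maximality $\mathbb{M}_n(\hat h_n)\ge\sup_{h}\mathbb{M}_n(h)-o_P(1)$ a fortiori gives $\mathbb{M}_n(\hat h_n)\ge\sup_{h\in K}\mathbb{M}_n(h)-o_P(1)$, so $\hat h_n$ is an $o_P(1)$-maximizer of $\mathbb{M}_n|_K$; the argmax machinery is robust to this slack, either by feeding the near-maximizer directly into the extended continuous mapping theorem, or by the quantitative route that outside any open ball around $\hat h$ the limiting supremum is strictly below $\mathbb{M}(\hat h)$, so an $o_P(1)$-maximizer cannot escape that ball in the limit. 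Consequently $\hat h_n$ agrees with the maximizer of $\mathbb{M}_n|_K$ up to an event of probability $\lesssim\varepsilon$, and therefore, for every bounded continuous $f$, $\limsup_n|Ef(\hat h_n)-Ef(\hat h)|\le C\varepsilon$; since $\varepsilon>0$ was arbitrary, $\hat h_n\stackrel{\mathcal{D}}{\to}\hat h$ in $\mathcal{T}$.

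The main obstacle is not a single hard estimate but the simultaneous bookkeeping required to (i) restrict attention to a compact $K$ on which the continuous mapping machinery applies, (ii) arrange that the limiting maximizer lies in $K^\circ$ so restricted and global argmaxes match, and (iii) push the $o_P(1)$-maximality down to the restricted problem without perturbing the limit. All three are handled by the standard compact-approximation-plus-well-separated-maximum scheme, and the measurability of the suprema and argmaxes over the uncountable set $\mathcal{T}$ is subsumed in the outer-probability formalism for weak convergence in $\ell^\infty$, exactly as in \citet{Wellner1996empirical}.
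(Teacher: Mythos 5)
The paper offers no proof of this lemma at all: it is quoted verbatim from \citet{Wellner1996empirical} (it is the argmax continuous mapping theorem, Theorem 3.2.2 there), so there is no internal argument to compare yours against. Your sketch is essentially the standard proof from that reference---compactification via the two tightness hypotheses, weak convergence on compacts, and upper semicontinuity plus uniqueness of the maximizer to make the maximum well separated on each compact---and it is sound in outline. Two points of bookkeeping deserve tightening. First, the step where you ``inflate'' $K$ so that $\hat h$ lies in $K^\circ$ is not available in a general metric space (a compact set need not possess a compact neighbourhood), but it is also unnecessary: on the event $\{\hat h \in K\}$, for any open ball $B$ around $\hat h$ the set $K\setminus B$ is compact, so upper semicontinuity and uniqueness of the global maximum already give the required well-separation $\sup_{K\setminus B}\mathbb{M} < \mathbb{M}(\hat h)$. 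Second, asserting that the argmax of $\mathbb{M}_n|_K$ converges in distribution to that of $\mathbb{M}|_K$ ``off an event of probability at most $\varepsilon$'' mixes conditioning with weak convergence in a way that is hard to make literal; the clean way to finish, as in the cited reference, is the Portmanteau route: for closed $F$, bound $P^*(\hat h_n \in F)$ by $P^*\bigl(\sup_{F\cap K}\mathbb{M}_n \ge \sup_{K}\mathbb{M}_n - o_P(1)\bigr) + P^*(\hat h_n \notin K)$, pass to the limit using continuity of the map $z \mapsto \sup_{F\cap K} z - \sup_{K} z$ on $\ell^\infty(K)$, and then use well-separation to conclude $\limsup_n P^*(\hat h_n \in F) \le P(\hat h \in F) + 2\varepsilon$. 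With those adjustments your argument is exactly the one the paper implicitly relies on through its citation.
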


Following lemma is useful to proof Theorem \ref{thm: adap}.  Define,
\begin{eqnarray}
\hat{\Lambda}_{h_1,h_2} &=& \sum_{\stackrel{t_i = \bigg[\frac{\gamma_p^2 h_i}{||\hat{\mu}_1-\hat{\mu}_2||_2^2}\bigg]\wedge 0 +1}{i=1,2}}^{\bigg[\frac{\gamma_p^2 h_i}{||\hat{\mu}_1-\hat{\mu}_2||_2^2}\bigg]\vee 0} B_{l_n}(\hat{\Gamma}_{t_2-t_1}),\ \ \  
\hat{\sigma}_{h_1,h_2} = \lim \gamma_p^{-4} (\hat{\mu}_1-\hat{\mu}_2)^\prime \hat{\Lambda}_{h_1,h_2} (\hat{\mu}_1-\hat{\mu}_2), \nonumber  \\
\hat{\tilde{\sigma}}_{(0,0),(t_1,t_2)} &=& \lim \gamma_p^{-4} \sum_{k_1,k_2 \in \mathcal{K}_n} (\hat{\mu}_{1k_1} - \hat{\mu}_{2k_1}) (\hat{\mu}_{1k_2}-\hat{\mu}_{2k_2}) B_{l_n}(\hat{\Gamma}_{t_2-t_1})(k_1,k_2), \nonumber \\
\hat{\tilde{\sigma}}_{(k,0),(t_1,t_2)} &=&  \lim \gamma_p^{-3} \sum_{k_1 \in \mathcal{K}_n} (\hat{\mu}_{1k_1} - \hat{\mu}_{2k_1}) B_{l_n}(\hat{\Gamma}_{t_2-t_1})(k,k_1), \nonumber \\
\hat{\tilde{\sigma}}_{(k_1,k_2),(t_1,t_2)} &=&  \lim \gamma_p^{-2}B_{l_n}(\hat{\Gamma}_{t_2-t_1})(k_1,k_2). \nonumber
\end{eqnarray}

\begin{lemma} \label{lem: adap}
Suppose SNR-ADAP, (C1), (C2) and (C3) hold. Then  the following statements  are true. 
\vskip 2pt
\noindent (a) $E(e^{\lambda\gamma_{p}^{-1}|\frac{1}{nb}\sum_{t=1}^{nb}\varepsilon_{kt,p}|}), E(e^{\lambda \gamma_{p}^{-1}|\frac{1}{n(1-b)}\sum_{t=nb+1}^{n}\varepsilon_{kt,p}|}) \leq C_1e^{C_2 \lambda^2}$  for all $\lambda \in \mathbb{R}$, $b\in (c^*,1-c^*)$ and for some $C_1,C_2>0$. 
\vskip 2pt
\noindent (b) $\gamma_p^{-1} ||\hat{\mu}_1 - \mu_1||_2, \gamma_p^{-1} ||\hat{\mu}_2 - \mu_2||_2 = O_{\text{P}}\left ( \sqrt{\frac{p\log p}{n}} \right)$ \\
\noindent (c) $\bigg|\frac{||\hat{\mu}_1-\hat{\mu}_2||_2^2}{||\mu_1-\mu_2||_2^2} -1\bigg| = O_{\text{P}}\left(\frac{p\log p}{n \gamma_p^{-2}||\mu_1-\mu_2||_2^2}\right)$ \\
\noindent (d) $|\sup_{k} |\hat{\mu}_{1k} - \hat{\mu}_{2k}| - \sup_{k} |{\mu}_{1k} - {\mu}_{2k}|| = O_{\text{P}}\left( \sqrt{\frac{\log p}{n}}\right)$ \\
\noindent (e) $|\sup_{k \in \mathcal{K}_n} |\hat{\mu}_{1k}-\hat{\mu}_{2k}| - \sup_{k \in \mathcal{K}_n} |\mu_{1k}-\mu_{2k}| | = O_{\text{P}}\left( \sqrt{\frac{\log p}{n}}\right)$ \\
\noindent (f) $\bigg|\frac{\sum_{k \in \mathcal{K}_n}(\hat{\mu}_{1k}-\hat{\mu}_{2k})^2}{\sum_{k \in \mathcal{K}_n} (\mu_{1k}-\mu_{2k})^2} -1\bigg| = O_{\text{P}}\left(\frac{p\log p}{n \gamma_p^{-2}||\mu_1-\mu_2||_2^2}\right)$ when $\gamma_p^{-2}\sum_{k \in \mathcal{K}_n} (\mu_{1k}-\mu_{2k})^2 \to c_1$. \\
\noindent (g) $|\hat{\sigma}_{h_1,h_2} - \sigma_{h_1,h_2}| = O_{\text{P}} \left(\sqrt{\frac{p\log p}{n}}||\mu_1-\mu_2||_2\right)$ for all $h_1,h_2 \in \mathbb{R}$ and $\gamma_p^{-2}||\mu_1-\mu_2||_2^2 \to 0$\\
\noindent (h) $|\hat{\tilde{\sigma}}_{(k_1,k_2),(t_1,t_2)} - \tilde{\sigma}_{(k_1,k_2),(t_1,t_2)} | = O_{\text{P}} \left(\sqrt{\frac{p\log p}{n}}||\mu_1-\mu_2||_2\right)$ for all $k_1,k_2 \in \mathcal{K}_0 \cup \{0\}$,  $t_1,t_2 \in \mathbb{Z}$  and $\gamma_p^{-2}||\mu_1-\mu_2||_2^2 \to c>0$. 
\end{lemma}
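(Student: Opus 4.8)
The plan is to base everything on one concentration estimate, namely part~(a), and derive (b)--(h) from it via elementary decompositions, union bounds over the $p$ coordinates, and the spectral-norm consistency rate \eqref{eqn: gammarate} for the banded sample autocovariances. Since $\hat\mu_1,\hat\mu_2$ involve $\hat\tau_n$, every statement is first reduced to the event $\mathcal E_K=\{\,n\gamma_p^{-2}\|\mu_1-\mu_2\|_2^2\,|\hat\tau_n-\tau_n|\le K\,\}$, which by Theorem~\ref{thm: conrate} (applicable because (SNR-ADAP) implies (SNR)) has probability at least $1-\varepsilon$ for $K=K(\varepsilon)$ large; on $\mathcal E_K$ the number of time indices that $\hat\tau_n$ assigns to the wrong segment is $O(\gamma_p^2\|\mu_1-\mu_2\|_2^{-2})$. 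For (a), write $\frac1{nb}\sum_{t\le nb}\varepsilon_{kt}=\sum_{i,s}c_{i,s}\eta_{is}$ and observe $\sum_{i,s}|c_{i,s}|\le\sum_{j\ge0}\|\mathrm{row}_k(A_j)\|_1\le\gamma_p\sum_{j\ge0}r_j$, with $\sum_j r_j<\infty$ uniformly in $p$ under (C1) (since $\sum_j r_j^\beta<\infty$ and $\beta<1$ force summability of $r_j$). Thus $\gamma_p^{-1}\frac1{nb}\sum_{t\le nb}\varepsilon_{kt}$ is a weighted sum of the i.i.d.\ sub-Gaussian $\eta_{is}$ with total $\ell_1$-weight $\le R:=\sup_p\sum_j r_j$, and by convexity of $\lambda\mapsto e^{\lambda x}$ together with (C2) one gets $Ee^{\lambda\gamma_p^{-1}|\cdot|}\le 2C_1e^{C_2R^2\lambda^2}$, uniformly in $k,b,p,n$; the right-segment average is handled identically.

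For (b), decompose $\hat\mu_1-\mu_1$ into the ``bias'' contributed by the misclassified indices, whose $\gamma_p^{-1}\|\cdot\|_2$ is $\le C\gamma_p^{-1}n^{-1}|n\hat\tau_n-n\tau_n|\,\|\mu_1-\mu_2\|_2=o_P(\sqrt{p\log p/n})$ on $\mathcal E_K$ after invoking (SNR-ADAP), plus the ``noise'' $\frac1{n\tau_n}\sum_{t\le n\tau_n}\varepsilon_t$; by (a) each coordinate of $\gamma_p^{-1}$ times the noise is sub-Gaussian with variance proxy $O(1/n)$, so $\gamma_p^{-2}\|\mathrm{noise}\|_2^2$ is a quadratic form in a sub-Gaussian vector and a Hanson--Wright / \cite{hsu2012tail}-type inequality bounds it by $O_P(p/n)$, giving (b). For (c) and (f), put $\Delta:=(\hat\mu_1-\mu_1)-(\hat\mu_2-\mu_2)$, expand $\|\hat\mu_1-\hat\mu_2\|_2^2=\|\mu_1-\mu_2\|_2^2+2(\mu_1-\mu_2)'\Delta+\|\Delta\|_2^2$, bound $\|\Delta\|_2^2=O_P(\gamma_p^2p\log p/n)$ by (b), and control the cross term via $\mathrm{Var}\big((\mu_1-\mu_2)'\Delta\big)\le Cn^{-1}\sum_h\|\Gamma_h\|_2\,\|\mu_1-\mu_2\|_2^2\le Cn^{-1}\gamma_p^2\|\mu_1-\mu_2\|_2^2$ (using $\sum_h\|\Gamma_h\|_2\le\gamma_p^2$); dividing by $\|\mu_1-\mu_2\|_2^2$ and using (SNR-ADAP) yields the claimed rate, and (f) is identical with all sums restricted to $\mathcal K_n$. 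For (d) and (e), the triangle inequality gives $\big|\sup_k|\hat\mu_{1k}-\hat\mu_{2k}|-\sup_k|\mu_{1k}-\mu_{2k}|\big|\le\sup_k|\Delta_k|$, and a union bound over the nested $p$ coordinates (over $\mathcal K_n$ for (e)) using the sub-Gaussian tail from (a) produces the displayed bound.

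For (g) and (h), split the error into two sources. First, replacing each $B_{l_n}(\hat\Gamma_u)$ by $\Gamma_u$ inside the bilinear form costs at most $\|B_{l_n}(\hat\Gamma_u)-\Gamma_u\|_2$ times $\gamma_p^{-4}\|\hat\mu_1-\hat\mu_2\|_2^2$ (or its $\mathcal K_0$/$\mathcal K_n$-truncated analogue), controlled by \eqref{eqn: gammarate} (valid under (C1)--(C3)) together with (c), (f). Second, replacing $\hat\mu_1,\hat\mu_2$ by $\mu_1,\mu_2$ perturbs both the quadratic form (handled by (b), (c), (d)--(f)) and the data-driven summation limits $[\gamma_p^2h_i/\|\hat\mu_1-\hat\mu_2\|_2^2]$: by (c) these coincide with $[\gamma_p^2h_i/\|\mu_1-\mu_2\|_2^2]$ with probability tending to $1$ when $\gamma_p^{-2}\|\mu_1-\mu_2\|_2^2\to c>0$ (the ranges being finite), and differ only by an $o_P(1)$ fraction of their length when $\gamma_p^{-2}\|\mu_1-\mu_2\|_2^2\to0$. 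Summing the two contributions and invoking (SNR-ADAP) gives (g); the same argument, restricted to the finite set $\mathcal K_0$ and to $\mathcal K_n$ (with the $c_1$-normalization attached to the index $0$), gives (h).

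The main obstacle is (g)--(h): one has to propagate two separate estimation errors — that of $(\hat\mu_1,\hat\mu_2)$, which in addition perturbs the summation ranges (delicate in the weak-signal regime, where those ranges grow unboundedly), and the spectral-norm error of the banded autocovariance estimates — through bilinear forms, and with rates sharp enough that (SNR-ADAP) by itself forces the total to be $o_P(1)$. By comparison, isolating the $\hat\tau_n$-randomness on the event $\mathcal E_K$ and executing the union bounds in (d)--(e) is routine bookkeeping.
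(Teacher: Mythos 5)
Your plan follows the paper's route in outline: (a) by expressing the segment average as a weighted sum of the i.i.d.\ sub-Gaussian innovations and invoking (C1)--(C2) (you use $\ell_1$-weights and convexity, the paper uses the $\ell_2$-weights $\sum_{i,t}|d_{t,i}|^2$; both give a uniform constant), (b) by isolating the misclassification terms via Theorem \ref{thm: conrate} and controlling the noise average through the quadratic-form inequality of \cite{hsu2012tail}, (c)--(f) by elementary expansions, and (g)--(h) by combining the banding consistency rate (\ref{eqn: gammarate}) with the mean-estimation errors. However, the announced strategy of ``basing everything on (a)'' has a genuine gap: statement (a), both as you prove it and as the paper proves it, gives $E\,e^{\lambda\gamma_p^{-1}|\cdot|}\le C_1e^{C_2\lambda^2}$ with a variance proxy that does \emph{not} decay in $n$ (the averaging factor $1/(nb)$ is discarded in the bound on the weights). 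Consequently your union bound for (d)--(e) yields only $O_{\mathrm{P}}(\sqrt{\log p})$, missing the crucial $n^{-1/2}$, and the assertion in (b) that ``by (a) each coordinate of the noise is sub-Gaussian with variance proxy $O(1/n)$'' is not a consequence of (a). What is needed is the refined computation that retains the $1/(nb)$ factor, giving a per-coordinate proxy of order $\gamma_p^2/n$ and, for the $\ell_2$-norm, Frobenius-type control of order $\gamma_p^2p/n$ in the Hsu--Kakade--Zhang bound; this is exactly the role the citation of \cite{hsu2012tail} plays in the paper's proof of (b), and it must be carried out explicitly rather than quoted from (a). (A smaller omission in (b): your decomposition accounts for the bias of the misclassified block but not its noise average, the paper's term $A_2$.)

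A second problem is the cross term in (c) and (f). Your variance bound gives relative error of order $\bigl(n\gamma_p^{-2}\|\mu_1-\mu_2\|_2^2\bigr)^{-1/2}$, and this is $O$ of the displayed rate $p\log p\,/\,(n\gamma_p^{-2}\|\mu_1-\mu_2\|_2^2)$ only when $n\gamma_p^{-2}\|\mu_1-\mu_2\|_2^2=O\bigl((p\log p)^2\bigr)$; since (SNR-ADAP) provides a lower rather than an upper bound on this quantity, the step ``dividing by $\|\mu_1-\mu_2\|_2^2$ and using (SNR-ADAP) yields the claimed rate'' fails in the strong-signal regime. Your argument does deliver the ratio $\to1$ in probability, which is what Theorem \ref{thm: adap} ultimately uses, but not the rate stated in the lemma, so either the cross term needs a sharper treatment or the discrepancy should be acknowledged. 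On the positive side, your explicit handling in (g)--(h) of the data-driven summation limits $[\gamma_p^2h_i/\|\hat\mu_1-\hat\mu_2\|_2^2]$ is more careful than the paper's, which absorbs that mismatch into $\|\hat\Lambda_{h_1,h_2}-\Lambda_{h_1,h_2}\|_2$ without comment.
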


\begin{proof}
(a) It is easy to see that $$\frac{1}{nb} \sum_{t=1}^{nb} \varepsilon_{kt,p} = \frac{1}{nb} \sum_{t=1}^{nb} \sum_{j=0}^{\infty} \sum_{i=1}^{p} A_{j}(k,i)\eta_{i,t-j} = \sum_{i=1}^{p} \sum_{t=-\infty}^{nb} d_{t,i} \eta_{i,t}$$ where $\sum_{i=1}^{p}\sum_{t=-\infty}^{nb} |d_{t,i}|^2 \leq  \sum_{i=1}^{p}\sum_{j=0}^{\infty} |A_{j}(k,i)|^2  \leq \gamma_{p}^2 r_{k}$.

\noindent Hence, by independence of $\{\eta_{kt,p}\}$ and (C1),  (C2), for all $\lambda \in \mathbb{R}$, 
$$E(e^{\lambda\gamma_{p}^{-1}|\frac{1}{nb}\sum_{t=1}^{nb}\varepsilon_{kt,p}|}) \leq C_1e^{C_2 \lambda^2 \gamma_{p}^{-2}\sum_{i=1}^{p}\sum_{t=-\infty}^{nb} |d_{t,i}|^2} \leq C_1e^{C_2 \lambda^2 r_k} <\infty.$$
Similar arguments hold for $\frac{1}{n(1-b)}\sum_{t=nb+1}^{n}\varepsilon_{kt,p}$.  This completes the proof of Lemma \ref{lem: adap}(a). 
\vskip 5pt
\noindent (b) With out loss of generality, assume $\hat{\tau}_n > \tau_n$. 
Note that
\begin{eqnarray}
\hat{\mu}_1 - \mu_1 &=& \frac{\tau_n}{\hat{\tau}_n} \left(\frac{1}{n\tau_n}\sum_{t=1}^{n\tau_n} X_{t,p} - \mu_1 \right) + \frac{\hat{\tau}_n-\tau_n}{\hat{\tau}_n} \left(\frac{1}{n(\hat{\tau}_n-\tau_n)}\sum_{t=n{\tau}_n+1}^{n\hat{\tau}_n} X_{t,p} - \mu_2 \right) + \frac{\hat{\tau}_n-\tau_n}{\hat{\tau}_n} (\mu_2-\mu_1)  \nonumber  \\
&=& A_1+A_2 +A_3,\ \ \ \text{say}. \nonumber 
\end{eqnarray}
Now By Theorem \ref{thm: conrate}, Assumption (C1)-(C3) and Theorem $1$ of \cite{hsu2012tail}, we have
\begin{eqnarray}
\gamma_p^{-1} ||A_1||_2, \gamma_p^{-1} ||A_1||_2 = O_{\text{P}}\left ( \sqrt{\frac{p\log p}{n}} \right), \gamma_p^{-1} ||A_3||_2 = o(1). \nonumber 
\end{eqnarray}
This completes the proof of Lemma \ref{lem: adap}(b) for $\mu_1$. Similar arguments also work for $\mu_2$.  
\vskip 5pt
\noindent Proof of Lemma \ref{lem: adap}(c)-(f) is similar to the proof of Lemma \ref{lem: adap}(b). Hence we omit it. 
\vskip 3pt
\noindent (g) Note that
\begin{eqnarray}
&& |\hat{\sigma}_{h_1,h_2} - \sigma_{h_1,h_2}| \nonumber \\
&\leq & C\bigg[||\hat{\Lambda}_{h_1,h_2}-\Lambda_{h_1,h_2}||_2||\hat{\mu}_1-\hat{\mu}_2||_2^2 + ||\hat{\mu}_1-\hat{\mu}_2-\mu_1-\mu_2||_2||\Lambda_{h_1,h_2}||_2||\hat{\mu}_1-\hat{\mu}_2||_2\bigg] \nonumber \\
& = & O_{\text{P}}\left[\left(\frac{\log p}{n} \right)^{\frac{\alpha}{2+\alpha}} ||\mu_1-\mu_2||_2^{2}\right] O_{\text{P}} \left(\frac{p\log p}{n}||\mu_1-\mu_2||_2^{2} \right) + O_{\text{P}} \left(\sqrt{\frac{p\log p}{n}}||\mu_1-\mu_2||_2\right) \nonumber \\
& = & O_{\text{P}} \left(\sqrt{\frac{p\log p}{n}}||\mu_1-\mu_2||_2\right). \nonumber
\end{eqnarray}
This completes the proof of Lemma \ref{lem: adap}(g).
\vskip 3pt
\noindent Proof of Lemma \ref{lem: adap}(h) is similar to the proof of Lemma \ref{lem: adap}(g). 
\vskip 3pt
\noindent This completes the proof of Lemma \ref{lem: adap}.
\end{proof}

\subsection{Proof of Theorem \ref{thm: conrate}} \label{subsec: conrate}
Recall that
\begin{eqnarray}
\hat{\tau}_n &=& \arg\min_{b \in (c^*, 1-c^*)} L(b)\ \ \ \text{where} \label{eqn: cpdefine1} \\
L(b) &=& \frac{1}{n}\sum_{k=1}^{b}\bigg[\sum_{t=1}^{nb}(X_{kt}-\hat{\mu}_{1k}(b))^2  + \sum_{t=nb+1}^{n}(X_{kt}-\hat{\mu}_{2k}(b))^2\bigg],  \nonumber \\
\hat{\mu}_{1k}(b) &=& \frac{1}{nb}\sum_{t=1}^{nb} X_{kt},\ \ \hat{\mu}_{2k}(b) = \frac{1}{n(1-b)} \sum_{t=nb+1}^{n} X_{kt}. \nonumber
\end{eqnarray}
Here we prove $n\gamma_p^{-2}||\mu_1-\mu_2||_2^2(\hat{\tau}_n-\tau_n) = O_{\text{P}}(1)$. 
\vskip 5pt
\noindent To prove Theorem \ref{thm: conrate}, we need  Lemma \ref{lem: wvan1}  quoted from \cite{Wellner1996empirical}.  For our purpose, we make use of the above lemma with $\mathbb{M}_n (\cdot) = {L} (\cdot)$, $\tilde{\mathbb{M}}_n(\cdot) = E {L} (\cdot)$,
$\mathcal{T} = [0,1]$, $\mathcal{T}_n = \{1/n,2/n,\ldots, (n-1)/n,1\} \cap [c^{*},1-c^{*}]$,  $d_n(b,\tau_n) = ||\mu_1-\mu_2||_2 \sqrt{|b - \tau_n|}$, $\phi_n(\delta) = \delta \gamma_p$, $\alpha = 1.5$, $r_n = \sqrt{n}\gamma_p^{-1}$.  Thus, to prove Theorem \ref{thm: conrate}, it is enough to establish that for some $C>0$,
\begin{eqnarray}
&&  E(\mathbb{M}_n (b) - \mathbb{M}_n (\tau_n)) \leq -C||\mu_1-\mu_2||_2^2 |b - \tau_n|\ \ \text{and} \label{eqn: lsecon1} \\
&& E\sup_{\delta/2 < d_n(b,\tau_n) < \delta,\  b \in \mathcal{T}}   |\mathbb{M}_n (b) - \mathbb{M}_n (\tau_n) - E(\mathbb{M}_n (b) - \mathbb{M}_n (\tau_n))| \leq  C\frac{\delta \gamma_p}{\sqrt{n}}. 
\label{eqn: lsecon2f}
\end{eqnarray}
Note that the left hand side of (\ref{eqn: lsecon2f}) is dominated by
\begin{eqnarray}
\left(E^{*}\sup_{\delta/2 < d_n(b,\tau_n) < \delta,\  b \in \mathcal{T}}   (\mathbb{M}_n (b) - \mathbb{M}_n (\tau_n) - E(\mathbb{M}_n (b) - \mathbb{M}_n (\tau_n)))^2\right)^{1/2}. \label{eqn: doobprelse}
\end{eqnarray}
By Doob's martingale inequality, (\ref{eqn: doobprelse}) is further dominated by
\begin{eqnarray}
(\text{V}^{*}(\mathbb{M}_n (b) - \mathbb{M}_n (\tau_n)))^{1/2}\ \ \text{where $d_n(b,\tau_n) = \delta$}.
\end{eqnarray}
Thus, to prove Theorem \ref{thm: conrate}, it is enough to show that for some $C>0$,
\begin{eqnarray}
\text{V}^{*}(\mathbb{M}_n (b) - \mathbb{M}_n (\tau_n)) \leq Cn^{-1} d_n^2(b,\tau_n) \gamma_p^2. \label{eqn: lsecon2final}
\end{eqnarray}
Hence, it is enough to prove (\ref{eqn: lsecon1}) and (\ref{eqn: lsecon2final}) to establish Theorem \ref{thm: conrate}. We shall prove these for $b<\tau_n$. Similar arguments work when $b \geq \tau_n$.

Let $L(b) = \frac{1}{n} \sum_{k=1}^{p} L_k(b)$, say. We write $\tau$ for $\tau_n$. 
\begin{eqnarray}
L_{k}(b) - L_{k}(\tau) &=& A_k(b) + B_k(b) + D_k(b)\ \ \text{where} \label{eqn: abd1} \\
A_k(b) &=& \sum_{t=1}^{nb} \bigg[ (X_{kt}-\hat{\mu}_{1k}(b))^2 - (X_{kt}-\hat{\mu}_{1k}(\tau))^2\bigg], \nonumber \\
B_k(b) &=& \sum_{t=n\tau +1}^{n} \bigg[(X_{kt}-\hat{\mu}_{2k}(b))^2 - (X_{kt}-\hat{\mu}_{2k}(\tau))^2 \bigg], \nonumber \\
D_k(b) &=& \sum_{t=nb+1}^{n\tau} \bigg[(X_{kt}-\hat{\mu}_{2k}(b))^2 - (X_{kt}-\hat{\mu}_{1k}(\tau))^2\bigg]. \nonumber 
\end{eqnarray}
\noindent First we calculate expectation and variance of $A_k$. Let $\tilde{X}_{kt} = X_{kt}-EX_{kt}$.  Now, 
\begin{eqnarray}
A_{k}(b) &=& \sum_{t=1}^{nb} \bigg[(X_{kt}-\hat{\mu}_{1k}(b))^2 - (X_{kt} -\hat{\mu}_{1k}(\tau))^2 \bigg] \nonumber \\
&=& \sum_{t=1}^{nb} \bigg[ \hat{\mu}_{1k}^2 (b) - \hat{\mu}_{1k}^{2}(\tau) - 2X_{kt}\hat{\mu}_{1k}(b) + 2X_{kt}\hat{\mu}_{1k}(\tau) \bigg] \nonumber \\
&=& nb \bigg[ \hat{\mu}_{1k}^2(b) - \hat{\mu}_{1k}^2 (\tau) - 2\hat{\mu}_{1k}^2 (b) + 2\hat{\mu}_{1k}(b)\hat{\mu}_{1k}(\tau) \bigg] \nonumber \\
&=& -nb (\hat{\mu}_{1k}(b) - \hat{\mu}_{1k}(\tau))^2 \nonumber \\
&=& -nb\bigg[ \frac{1}{n}\left(\frac{1}{b}-\frac{1}{\tau}\right) \sum_{t=1}^{nb} X_{kt}-\frac{1}{n\tau}\sum_{t=nb+1}^{n\tau} X_{kt} \bigg]^2 \nonumber \\
&=& -nb \bigg[ \left(\frac{1}{n}\frac{\tau-b}{\tau b} \right)^2 \left(\sum_{t=1}^{nb} \tilde{X}_{kt} \right)^2  + \left( \frac{1}{n\tau}\right)^2 \left( \sum_{t=nb+1}^{n\tau} \tilde{X}_{kt}\right)^2 \nonumber\\
&& \hspace{4 cm} - \frac{2}{n} \frac{\tau-b}{\tau b} \frac{1}{n\tau} \left( \sum_{t=1}^{nb} \tilde{X}_{kt}\right) \left( \sum_{t=nb+1}^{n\tau} \tilde{X}_{kt}\right) \bigg] \nonumber \\
&=& A_{1k}(b) + A_{2k}(b) + A_{3k}(b),\ \ \text{say}. 
\end{eqnarray}
\noindent Note that
\begin{eqnarray}
&& E\left( \frac{1}{n} \sum_{k=1}^{p} A_{1k}(b)\right) = -b \left(\frac{1}{n} \frac{\tau-b}{\tau b}\right)^2 \sum_{k=1}^{p} E \left(\sum_{t=1}^{nb}\tilde{X}_{kt} \right)^2 \nonumber \\
&=& -b \left(\frac{1}{n} \frac{\tau-b}{\tau b} \right)^2 \sum_{k=1}^{p} \sum_{t_1=1}^{nb} \sum_{t_2=1}^{nb} \text{Cov}(\tilde{X}_{kt_1},\tilde{X}_{kt_2}) \nonumber \\
&=& - b \left(\frac{1}{n} \frac{\tau-b}{\tau b} \right)^2 \sum_{t_1=1}^{nb} \sum_{t_2=1}^{nb} \sum_{k=1}^{p} \Gamma_{t_2-t_1}(k,k)\nonumber \\
& \geq & -b \frac{1}{nb} \frac{(\tau-b)^2}{\tau^2} \sum_{u=-\infty}^{\infty} |\text{Tr}(\Gamma_{|u|})| \geq -C \frac{(\tau-b)p}{n} \sum_{u=-\infty}^{\infty} |\frac{1}{p} \text{Tr}(\Gamma_{|u|})| \nonumber \\
& \geq & -C(\tau-b)\frac{p}{n} \sum_{u=-\infty}^{\infty} \sum_{j=0}^{\infty} ||A_j||_2 ||A_{j+u}||_2 \geq -C(\tau-b)\frac{p}{n}\gamma_p^2. \nonumber 
\end{eqnarray}
\noindent Next,
\begin{eqnarray}
&& E\left( \frac{1}{n}\sum_{k=1}^{p} A_{2k}(b) \right) = \frac{-nb}{n} \left(\frac{1}{n\tau} \right)^2 \sum_{k=1}^{p} E \left( \sum_{t=nb+1}^{n\tau} \tilde{X}_{kt} \right)^2 \nonumber \\
&=& -b \left( \frac{1}{n\tau}\right)^2 \sum_{t_1 =  nb+1}^{n\tau} \sum_{t_2=nb+1}^{n\tau} \text{Tr}(\Gamma_{t_2-t_1}) \geq -C\frac{(\tau-b)}{n} \sum_{u=-\infty}^{\infty} |\text{Tr}(\Gamma_{|u|})| \nonumber \\
& \geq & -C\frac{(\tau-b)p}{n} \sum_{u=-\infty}^{\infty} |\frac{1}{p} \text{Tr}(\Gamma_{|u|})| \geq -C \frac{(\tau-b)p}{n} \gamma_p^2.  \nonumber
\end{eqnarray}
\noindent Also, 
\begin{eqnarray}
&& E\left(\frac{1}{n}\sum_{k=1}^{p} A_{3k} \right) = \frac{2b}{\tau} \frac{1}{n^2} \frac{\tau-b}{\tau b} \sum_{t_1 = 1}^{nb} \sum_{t_2=nb+1}^{n\tau} \sum_{k=1}^{p} \text{Cov}(\tilde{X}_{kt_1},\tilde{X}_{kt_2}) \nonumber \\
&=& \frac{2b}{\tau} \frac{1}{n^2} \frac{\tau-b}{\tau b} \sum_{t_1 = 1}^{nb} \sum_{t_2=nb+1}^{n\tau} \text{Tr}(\Gamma_{t_2-t_1})  \geq - C \frac{(\tau-b)p}{n} \sum_{u=-\infty}^{\infty} |\frac{1}{p}\text{Tr}(\Gamma_{|u|})| \geq -C \frac{(\tau-b)p}{n} \gamma_p^2. \nonumber
\end{eqnarray}
Thus 
\begin{eqnarray} \label{eqn: expA}
E\left( \frac{1}{n}\sum_{k=1}^{p}A_k(b)\right) \geq -C \frac{(\tau-b)p}{n} \gamma_p^2. 
\end{eqnarray}
\noindent Now we compute variance of $A_k$. 
\begin{eqnarray}
&& \text{V} \left( \frac{1}{n}\sum_{k=1}^{p} A_{1k}(b) \right) = \text{V} \bigg[ \frac{nb}{n} \left( \frac{1}{n} \frac{\tau-b}{\tau b}\right)^2 \sum_{k=1}^{p} \left(\sum_{t=1}^{nb} \tilde{X}_{kt} \right)^2\bigg] \nonumber \\
& \leq & \frac{C(\tau-b)}{n^4}\sum_{k_1,k_2=1}^{p} \sum_{t_1,t_2=1}^{nb} \sum_{t_3,t_4=1}^{nb} \bigg[ E(\tilde{X}_{k_1t_1}\tilde{X}_{k_1t_2}\tilde{X}_{k_2t_3}\tilde{X}_{k_2t_4}) - \Gamma_{t_2-t_1}(k_1,k_1)\Gamma_{t_4-t_3}(k_2,k_2) \bigg] \nonumber \\
& \leq & \frac{C(\tau-b)}{n^4}\sum_{k_1,k_2=1}^{p} \sum_{t_1,t_2=1}^{nb} \sum_{t_3,t_4=1}^{nb} \bigg[ \text{Cum}(\tilde{X}_{k_1t_1}, \tilde{X}_{k_1t_2}, \tilde{X}_{k_2t_3}, \tilde{X}_{k_2t_4}) \nonumber \\
&& \hspace{4 cm} + \Gamma_{t_3-t_1}(k_1,k_2)\Gamma_{t_4-t_2}(k_1,k_2) + \Gamma_{t_4-t_1}(k_1,k_2)\Gamma_{t_3-t_2}(k_1,k_2) \bigg]  \nonumber \\
& \leq & \frac{C(\tau-b)}{n^4}\sum_{t_1,t_2=1}^{nb} \sum_{t_3,t_4=1}^{nb} \bigg| \sum_{k_1,k_2=1}^{p} \text{Cum}(\tilde{X}_{k_1t_1}, \tilde{X}_{k_1t_2}, \tilde{X}_{k_2t_3}, \tilde{X}_{k_2t_4}) \bigg| \nonumber \\
&&  + \frac{C(\tau-b)}{n^4}\sum_{t_1,t_2=1}^{nb} \sum_{t_3,t_4=1}^{nb}  |\text{Tr}(\Gamma_{t_3-t_1}\Gamma_{t_2-t_4}) + \text{Tr}(\Gamma_{t_4-t_1}\Gamma_{t_2-t-3}) | \nonumber \\
& \leq & \frac{C(\tau-b)}{n^3}\sum_{u,v,w=-\infty}^{\infty} \bigg|\sum_{k_1,k_2=1}^{p} \text{Cum}(\tilde{X}_{k_1 0}, \tilde{X}_{k_1 u}, \tilde{X}_{k_2 v}, \tilde{X}_{k_2 w})  \bigg|  + \frac{C(\tau-b)}{n^2} \sum_{u,v=-\infty}^{\infty} |\text{Tr}(\Gamma_u \Gamma_v)|. \nonumber
\end{eqnarray}
Note that,
\begin{eqnarray}
\sum_{u,v=-\infty}^{\infty} |\text{Tr}(\Gamma_u \Gamma_v)| & \leq & p\sum_{u,v=-\infty}^{\infty} ||\Gamma_u||_2 ||\Gamma_v||_2   \leq  p \left( \sum_{u=-\infty}^{\infty} ||\Gamma_u||_2\right)^2 \leq p\gamma_p^4. \nonumber 
\end{eqnarray}
Let $\Delta = E(\eta_{kt}^4)-3$.  Therefore,
\begin{eqnarray}
&& \text{Cum}(\tilde{X}_{k_1 0}, \tilde{X}_{k_1 u}, \tilde{X}_{k_2 v}, \tilde{X}_{k_2 w}) \nonumber \\
&=& \sum_{k_1,k_2=1}^{p} \text{Cum} \bigg[\sum_{j=0}^{\infty} \sum_{k_3=1}^{p} A_j (k_1,k_3) \eta_{-j,k_3}, \sum_{j=0}^{\infty} \sum_{k_4=1}^{p} A_j(k_1,k_4)\eta_{u-j,k_4}, \nonumber \\
&& \hspace{2 cm} \sum_{j=0}^{\infty} \sum_{k_5=1}^{p} A_j(k_2,k_5)\eta_{v-j,k_5}, \sum_{j=0}^{\infty} \sum_{k_6=1}^{p} A_{j}(k_2,k_6) \eta_{w-j,k_6} \bigg] \nonumber \\
&=& \sum_{j=0}^{\infty} \sum_{k_1,k_2=1}^{p} \sum_{k=1}^{p} (A_j(k_1,k)A_{j+u}(k_1,k)A_{j+v}(k_2,k)A_{j+w}(k_2,k)) \Delta \nonumber \\
&=& \Delta \sum_{j=0}^{\infty} \sum_{k=1}^{p} \bigg[(A_j^{\prime} A_{u+j})(k,k) (A_{j+v}^{\prime} A_{j+w})(k,k) \bigg]. \nonumber 
\end{eqnarray}
Hence, 
\begin{eqnarray}
&& \bigg| \sum_{k_1,k_2=1}^{p} \text{Cum}(\tilde{X}_{k_1 0}, \tilde{X}_{k_1 u}, \tilde{X}_{k_2 v}, \tilde{X}_{k_2 w}) \bigg| \leq Cp \sum_{j=0}^{\infty} ||A_j||_2 ||A_{u+j}||_2 ||A_{v+j}||_2 ||A_{w+j}||_2. \nonumber 
\end{eqnarray}
Finally, 
\begin{eqnarray}
\text{V} \left(\frac{1}{n}\sum_{k=1}^{p} A_{1k}(b) \right) &\leq & \frac{C(\tau-b)p}{n^2} \bigg(\frac{\gamma_p^4}{n} \bigg) + \frac{C(\tau-b)p}{n^2} \gamma_p^4 
\leq  \frac{C(\tau-b)p}{n^2} \gamma_p^4. \nonumber
\end{eqnarray}
Similarly, 
\begin{eqnarray}
\text{V} \left( \frac{1}{n} \sum_{k=1}^{p} A_{2k}(b) \right), \text{V} \left( \frac{1}{n} \sum_{k=1}^{p} A_{3k}(b) \right) \leq \frac{C}{n} \frac{(\tau-b)p}{n} \gamma_p^4. \nonumber 
\end{eqnarray}
\noindent Therefore, 
\begin{eqnarray}
\text{V} \left( \frac{1}{n} \sum_{k=1}^{p} A_{k}(b) \right) \leq \frac{C}{n} \frac{(\tau-b)p}{n} \gamma_p^4. \label{eqn: VARA}
\end{eqnarray}
\noindent Next consider $D_k$. For $nb+1 \leq t \leq n\tau$, 
\begin{eqnarray}
X_{kt} - \hat{\mu}_{2k}(b) &=& (X_{kt}-\mu_{1k}) - (\hat{\mu}_{2k}(b) - \frac{\tau -b}{1-b} \mu_{1k} - \frac{1-\tau}{1-b} \mu_{2k}) +  \frac{1-\tau}{1-b}(\mu_{1k}-\mu_{2k}) \nonumber \\
&=& \tilde{X}_{kt} - \frac{1}{n(1-b)} \sum_{t=nb+1}^{n} \tilde{X}_{kt} + \frac{1-\tau}{1-b} (\mu_{1k}-\mu_{2k})\ \ \ \text{and} \nonumber \\
X_{kt} - \hat{\mu}_{1k}(\tau) &=& (X_{kt}-\mu_{1k}) - (\hat{\mu}_{1k}(\tau) - \mu_{1k})  = \tilde{X}_{kt} - \frac{1}{n\tau} \sum_{t=1}^{n\tau} \tilde{X}_{kt}. \nonumber 
\end{eqnarray}
Let,
\begin{eqnarray}
\tilde{\mu}_{1k}(\tau) &=& \frac{1}{n\tau} \sum_{t=1}^{n\tau} (X_{kt}-E(X_{kt}),\ \ \ \tilde{\mu}_{1k}(b) = \frac{1}{nb} \sum_{t=1}^{nb} (X_{kt}-E(X_{kt}), \nonumber \\
\tilde{\mu}_{2k}(b) &=& \frac{1}{n(1-b)} \sum_{t=nb+1}^{n} (X_{kt}-E(X_{kt}),\ \  \  \tilde{\mu}_{2k}(\tau) = \frac{1}{n(1-\tau)} \sum_{t=n\tau+1}^{n} (X_{kt}-E(X_{kt}),\nonumber \\
\tilde{\mu}_{k}(\tau, b) &=& \frac{1}{n(\tau-b)}\sum_{t=nb+1}^{n\tau} (X_{kt}-E(X_{kt})). \nonumber 
\end{eqnarray}
Thus
\begin{eqnarray}
D_k(b) &=& \sum_{t=nb+1}^{n\tau} \bigg[(X_{kt}-\hat{\mu}_{2k}(b)^2 - (X_{kt} - \hat{\mu}_{1k}(\tau))^2 \bigg] \nonumber \\
&=& \sum_{t=nb+1}^{n\tau} \bigg[ \left(\frac{1}{n(1-b)}\sum_{t=nb+1}^{n}\tilde{X}_{kt} \right)^2 + \left( \frac{1-\tau}{1-b} (\mu_{1k}-\mu_{2k})\right)^2 - \left( \frac{1}{n\tau} \sum_{t=1}^{n\tau} \tilde{X}_{kt}\right)^2 \nonumber \\
&& - 2\tilde{X}_{kt}\left(\frac{1}{n(1-b)}\sum_{t=nb+1}^{n}\tilde{X}_{kt} \right) + 2\tilde{X}_{kt} \left( \frac{1-\tau}{1-b} (\mu_{1k}-\mu_{2k})\right) \nonumber \\
&& - \frac{2}{n(1-b)} \frac{1-\tau}{1-b} \left( \sum_{t=nb+1}^{n}\tilde{X}_{kt}\right) (\mu_{1k}-\mu_{2k}) + 2\tilde{X}_{kt} \left(\frac{1}{n\tau} \sum_{t=1}^{n\tau}\tilde{X}_{kt} \right) \bigg] \nonumber \\
&=& n(\tau-b) \bigg[(\tilde{\mu}_{2k}(b))^2 + \left( \frac{1-\tau}{1-b} (\mu_{1k}-\mu_{2k})\right)^2 - (\tilde{\mu}_{1k}(\tau))^2 \nonumber \\
&& -2\tilde{\mu}_k(b,\tau)\tilde{\mu}_{2k}(b) + 2\tilde{\mu}_k (b,\tau)\frac{1-\tau}{1-b} (\mu_{1k}-\mu_{2k}) \nonumber \\
&& -\frac{2}{n(1-b)} \frac{1-\tau}{1-b} \tilde{\mu}_{2k}(b) (\mu_{1k}-\mu_{2k}) + 2\tilde{\mu}_{k}(\tau,b)\tilde{\mu}_{1k}(\tau) \bigg] \nonumber \\
&=& \sum_{i=1}^{7} D_{ik}(b),\ \ \text{say}. 
\end{eqnarray}
Now, 
\begin{eqnarray}
&& E\left(\frac{1}{n}\sum_{k=1}^{p} D_{1k}(b)\right) = \frac{1}{n} \sum_{k=1}^{p} n(\tau-b) E\left( \frac{1}{n(1-b)} \sum_{t=nb+1}^{n} \tilde{X}_{kt} \right)^2  \nonumber \\
& \geq & C \frac{(\tau-b)}{n^2} \sum_{k=1}^{p} \sum_{t_1,t_2=nb+1}^{n} \text{Cov}(\tilde{X}_{kt},\tilde{X}_{kt_2}) = C\frac{\tau-b}{n^2} \sum_{k=1}^{p} \sum_{t_1,t_2=nb+1}^{n} \Gamma_{t_2-t_1}(k,k) \nonumber \\
& \geq & -C\frac{\tau-b}{n} \sum_{u=-\infty}^{\infty} |\text{Tr}(\Gamma_{|u|})| \geq -C\frac{(\tau-b)p}{n} \gamma_p^2.
\end{eqnarray}
Similarly, 
\begin{eqnarray}
E\left( \frac{1}{n}\sum_{k=1}^{p} D_{3k}(b)\right), E\left(\frac{1}{n}\sum_{k=1}^{p}D_{4k}(b) \right),  E\left( \frac{1}{n} \sum_{k=1}^{p} D_{7k}(b) \right) \geq - C\frac{(\tau-b)p}{n} \gamma_p^2. 
\end{eqnarray}
\noindent Also, $E\left(\frac{1}{n}\sum_{k=1}^{p} D_{5k}(b) \right) = E\left(\frac{1}{n}\sum_{k=1}^{p} D_{6k}(b) \right) =0$,  $E\left(\frac{1}{n}\sum_{k=1}^{p} D_{2k}(b) \right) \geq C(\tau-b)||\mu_1-\mu_2||_2^2$. 
\vskip 2pt
\noindent Thus, 
\begin{eqnarray} \label{eqn: expD}
E\left(\frac{1}{n}\sum_{k=1}^{p} D_{k}(b) \right) \geq  -C(\tau-b)\frac{p}{n} \gamma_p^2 + c(\tau-b) ||\mu_1-\mu_2||_2^2.
\end{eqnarray}

\noindent Now, using similar calculations as in  $\text{V}\left(\frac{1}{n}\sum_{k=1}^{p}A_{1k}(b) \right)$, we have 
\begin{eqnarray}
\text{V}\left( \frac{1}{n}\sum_{k=1}^{p} (D_{1k}(b)+D_{3k}(b)+D_{4k}(b)+D_{7k}(b))\right) \leq \frac{C}{n} \frac{(\tau-b)p}{n}\gamma_p^4. \nonumber 
\end{eqnarray}
\noindent Moreover, $\text{V}\left(\frac{1}{n}\sum_{k=1}^{p} D_{2k}(b)\right) =0$ and  
\begin{eqnarray}
&& \text{V}\left( \frac{1}{n}\sum_{k=1}^{p} D_{5k}(b)\right) = \frac{4(\tau-b)^2 (1-\tau)^2}{(1-b)^2} \text{V} \left( \sum_{k=1}^{p} (\mu_{1k}-\mu_{2k})\tilde{\mu}_{k}(b,\tau)\right)  \nonumber \\
&=& \frac{4(1-\tau)^2}{n^2 (1-b)^2} \sum_{k_1,k_2=1}^{p} \sum_{t_1,t_2=nb+1}^{n\tau}(\mu_{1k_1}-\mu_{2k_1})(\mu_{1k_2}-\mu_{2k_2})\Gamma_{t_2-t_1}(k_1,k_2) \nonumber \\
&=& \frac{4(1-\tau)^2}{n^2(1-b)^2} \sum_{u=-n(\tau-b)+1}^{n(\tau-b)-1} (n(\tau-b)-u)(\mu_1-\mu_2)^\prime \Gamma_u (\mu_1-\mu_2)) \nonumber \\
& \leq & \frac{C(\tau-b)}{n} \sum_{u=-\infty}^{\infty} ||\mu_1-\mu_2||_2^2 ||\Gamma_u||_2 \nonumber \\
& \leq & \frac{C}{n}(\tau-b)||\mu_1 - \mu_2||_2^2 \gamma_p^2. \nonumber 
\end{eqnarray}
Similarly, $\text{V}\left( \frac{1}{n}\sum_{k=1}^{p} D_{6k}(b)\right) \leq \frac{C}{n}(\tau-b)||\mu_1-\mu_2||_2^2\gamma_p^2$.

\noindent Thus
\begin{eqnarray} \label{eqn: VARD}
\text{V}\left( \frac{1}{n}\sum_{k=1}^{p}D_{k}(b)\right) &\leq & \frac{C}{n}\frac{(\tau-b)p}{n}\gamma_p^4 + \frac{C}{n}(\tau-b)||\mu_1-\mu_2||_2^2\gamma_p^2 \nonumber \\
& \leq &\frac{C}{n} (\tau-b) ||\mu_1-\mu_2||_2^2 \gamma_p^2. 
\end{eqnarray}

\noindent Next, for $n\tau+1 \leq t \leq n$, we have
\begin{eqnarray}
X_{kt} -\hat{\mu}_{2k}(b) &=& (X_{kt} - \mu_{2k}) -  \left( \hat{\mu}_{2k}(b) - \frac{\tau-b}{1-b} \mu_{1k} -\frac{1-\tau}{1-b}\mu_{2k}\right) +\frac{\tau-b}{1-b}(\mu_{2k}-\mu_{1k}), \nonumber \\
X_{kt} -\hat{\mu}_{2k}(b) &=&  (X_{kt}-\mu_{2k}) - (\hat{\mu}_{2k}(\tau) - \mu_{2k}). \nonumber
\end{eqnarray}

\noindent Thus
\begin{eqnarray}
B_k &=& \sum_{t=n\tau +1}^{n} \bigg[(\tilde{X}_{kt} -\tilde{\mu}_{2k}(b) +\frac{\tau-b}{1-b}(\mu_{2k}-\mu_{1k}))^2 - (\tilde{X}_{kt}-\tilde{\mu}_{2k}(\tau))^2  \bigg] \nonumber \\
&=& \sum_{t=n\tau +1}^{n} \bigg[ (\tilde{\mu}_{2k}(b))^2 + \left( \frac{\tau-b}{1-b}\right)^2 (\mu_{2k}-\mu_{1k})^2 - (\tilde{\mu}_{2k}(\tau))^2 \nonumber \\
&& - 2\tilde{X}_{kt}\tilde{\mu}_{2k}(b)+ 2\tilde{X}_{kt} \left( \frac{\tau-b}{1-b}\right) (\mu_{2k}-\mu_{1k}) - 2\tilde{\mu}_{2k}(b) \frac{\tau-b}{1-b} (\mu_{2k}-\mu_{1k}) + 2\tilde{X}_{kt} \tilde{\mu}_{2k}(\tau)\bigg] \nonumber \\
&=& n(1-\tau) \bigg[ (\tilde{\mu}_{2k}(b))^2 + \left(\frac{\tau-b}{1-b} \right)^2 (\mu_{2k}-\mu_{1k})^2 -(\tilde{\mu}_{2k}(\tau))^2 - 2\tilde{\mu}_{2k}(\tau)\tilde{\mu}_{2k}(b) \nonumber \\
&& + 2 \left(\frac{\tau-b}{1-b} \right)\tilde{\mu}_{2k}(\tau)(\mu_{2k}-\mu_{1k}) -  2 \left(\frac{\tau-b}{1-b} \right)\tilde{\mu}_{2k}(b)(\mu_{2k}-\mu_{1k}) + 2(\tilde{\mu}_{2k}(\tau))^2\bigg] \nonumber \\
&=& n(1-\tau) \bigg[(\tilde{\mu}_{2k}(b)-\tilde{\mu}_{2k}(\tau))^2 + \left(\frac{\tau-b}{1-b}\right)^2 (\mu_{2k}-\mu_{1k})^2 \nonumber \\
&& + 2\left( \frac{\tau-b}{1-b}\right) \tilde{\mu}_{2k}(\tau) (\mu_{2k}-\mu_{1k}) - 2\tilde{\mu}_{2k}(b) \left( \frac{\tau-b}{1-b}\right) (\mu_{2k}-\mu_{1k}) \bigg]. \nonumber 
\end{eqnarray}
\noindent Note that
\begin{eqnarray}
\tilde{\mu}_{2k}(b) - \tilde{\mu}_{2k}(\tau) &=& \frac{1}{n(1-b)} \sum_{t=nb+1}^{n} \tilde{X}_{kt} - \frac{1}{n(1-\tau)} \sum_{t=n\tau +1}^{n} \tilde{X}_{kt} \nonumber \\
&=& \frac{1}{n}\left(\frac{1}{1-b}-\frac{1}{1-\tau} \right) \sum_{t=n\tau +1}^{n} \tilde{X}_{kt} + \frac{1}{n(1-b)} \sum_{t=nb+1}^{n\tau} \tilde{X}_{kt}  \nonumber \\
&=& - \frac{\tau-b}{1-b} \tilde{\mu}_{2k}(\tau) + \frac{\tau-b}{1-b} \tilde{\mu}_k (b,\tau). \nonumber 
\end{eqnarray}
\noindent Therefore, 
\begin{eqnarray}
B_k &=& n(1-\tau) \bigg[\left( \frac{\tau-b}{1-b}\right)^2 \tilde{\mu}_{2k}^2(\tau) + \left( \frac{\tau-b}{1-b}\right)^2 (\tilde{\mu}_k(b,\tau))^2 + \left( \frac{\tau-b}{1-b}\right)^2 (\mu_{2k}-\mu_{1k})^2  \nonumber \\
&& - 2\left( \frac{\tau-b}{1-b}\right) ^2\tilde{\mu}_{2k}(\tau) \tilde{\mu}_{k}(b,\tau) + 2\left( \frac{\tau-b}{1-b}\right) \tilde{\mu}_{2k}(\tau)(\mu_{2k}-\mu_{1k}) - 2\tilde{\mu}_{2k}(b) \left( \frac{\tau-b}{1-b}\right) (\mu_{2k}-\mu_{1k})   \bigg] \nonumber \\
&=& \sum_{i=1}^{6} B_{ik}(b),\ \ \ \text{say}. \nonumber 
\end{eqnarray}

Now, using similar calculations as in $E\left(\frac{1}{n} \sum_{k=1}^{p} A_k(b)\right)$, we have
\begin{eqnarray}
&& E \left( \frac{1}{n}\sum_{k=1}^{p} (B_{1k}(b)+B_{2k}(b)+B_{4k}(b)) \right)  \geq - \frac{C(\tau-b)p}{n} \gamma_p^2,  \nonumber \\
&& E  \left( \frac{1}{n}\sum_{k=1}^{p} (B_{5k}(b) + B_{6k}(b))\right) = 0, \nonumber \\
&& E\left(\frac{1}{n}\sum_{k=1}^{p} B_{3k}(b) \right) = (1-\tau) \left( \frac{\tau-b}{1-b}\right)^2 \sum_{k=1}^{p} (\mu_{1k}-\mu_{2k})^2   \geq   C(\tau-b)||\mu_1-\mu_2||_2^2. \nonumber
\end{eqnarray}

Thus   
\begin{eqnarray} \label{eqn: expB}
E \left(\frac{1}{n}\sum_{k=1}^{p}B_{k}(b) \right) \geq C(\tau-b)||\mu_1-\mu_2||_2^2 - \frac{C(\tau-b)p}{n} \gamma_p^2. 
\end{eqnarray}

Moreover, using similarly calculations as in $\text{V}\left(\frac{1}{n}\sum_{k=1}^{p}D_k(b) \right)$,
\begin{eqnarray} \label{eqn: VARB}
\text{V}\left( \frac{1}{n}\sum_{k=1}^{p}B_k(b)\right) \leq \frac{C}{n}\frac{(\tau-b)p}{n} \gamma_p^4 + \frac{C}{n} (\tau-b) ||\mu_1-\mu_2||_2^2 \gamma_p^2 
 \leq \frac{C}{n} (\tau-b) ||\mu_1-\mu_2||_2^2 \gamma_p^2.
\end{eqnarray}

Hence, by (\ref{eqn: expA}), (\ref{eqn: expD}) and (\ref{eqn: expB}), we have
\begin{eqnarray}
E(L(b) - L(\tau)) &=& E\left(\frac{1}{n} \sum_{k=1}^{p} (A_{k}(b)+B_{k}(b)+D_{k}(b)) \right) \nonumber \\
&\geq & C(\tau-b) ||\mu_1-\mu_2||_2^2 - \frac{C(\tau-b)p}{n}\gamma_p^2 \nonumber \\
& \geq & C(\tau-b)||\mu_1-\mu_2||_2^2. \nonumber 
\end{eqnarray}

\noindent Also, by (\ref{eqn: VARA}), (\ref{eqn: VARB}) and (\ref{eqn: VARD}), we have $$\text{V}\left(\frac{1}{n} \sum_{k=1}^{p} (A_{k}(b)+B_{k}(b)+D_{k}(b))  \right) \leq \frac{C}{n}(\tau-b) ||\mu_1-\mu_2||_2^2 \gamma_p^2.$$ 
This proves (\ref{eqn: lsecon1}) and (\ref{eqn: lsecon2final}) and hence Theorem \ref{thm: conrate}.

\begin{remark} \label{rem: baigamma}
It is easy to observe from the proof of Theorem \ref{thm: conrate} that $\gamma_p^2$ serves as an upper bound of  $\sum_{u=-\infty}^{\infty} |\frac{1}{p} \text{Tr}(\Gamma_{|u|})|$ or of similar quantities. Now under cross-sectional independence i.e. when $A_{j,p} = \text{Diag}\{a_{j,k}:\ 1\leq k\leq p\}\ \forall j \geq 0$, then $\tilde{\gamma}_p^2  := (\sup_{1\leq k \leq p} \sum_{j=0}^{\infty} |a_{j,k}|)^2$ is turned out to be an smaller upper bound and $\gamma_p$ can be replaced by $\tilde{\gamma}_p$ throughout the proof. As a consequence the conclusion of Theorem \ref{thm: conrate} continues to hold under weaker (SNR$^\prime$) $\frac{n \tilde{\gamma}^{-2}_p}{p} ||\mu_1 - \mu_2||_2^2 \to \infty$ compared to SNR. 
\end{remark}

\subsection{Proof of Theorem \ref{thm: asympdist}} \label{subsec: asympdist}
\noindent \textbf{Proof of (a)}. Note that $P(\hat{\tau}_{n} \neq \tau_n) = P(|\hat{\tau}_{n} -\tau_n| \geq n^{-1}) \to 0$ 
since $\gamma_p^{-2} ||\mu_1 - \mu_2||_2 \to \infty$ and by Theorem \ref{thm: conrate}, $n \gamma_p^{-2} ||\mu_1 - \mu_2||_2^2(\hat{\tau}_{n}-\tau_n) = O_{P}(1)$. 
\vskip 10pt
\noindent For Theorem \ref{thm: asympdist}(b) and (c),  we use Lemma \ref{lem: wvandis1}. 
To employ Lemma \ref{lem: wvandis1}, we consider 
 $\mathbb{M}_n(h) = n\gamma_p^{-2}(L(b) - L(\tau))$ where $b = \tau +n^{-1}\gamma_p^{2} ||\mu_1 - \mu_2||_2^{-2} h$ and $h \in \mathbb{R}$. 
We shall find weak limit of $n\gamma_p^{-2} (L(b)-L(\tau))$ when $\gamma_p^{-2} ||\mu_1 - \mu_2||_2^2 \to 0$ or $c>0$.  We assume $b<\tau_n$. Similar arguments work when $b \geq \tau_n$. 

\noindent From the previous calculations, it is easy to see that
\begin{eqnarray}
n\gamma_p^{-2} (L(b)-L(\tau))  &=& \gamma_p^{-2} \sum_{k=1}^{p} \bigg[A_{k}(b) + B_{k}(b) +\sum_{i=1}^{7}D_{ik}(b) \bigg] \nonumber
\end{eqnarray}
\noindent where
\begin{eqnarray}
\bigg|E\left( \gamma_p^{-2} \sum_{k=1}^{p} A_{k}(b) \right) \bigg| & \leq & C|\tau - b|p, \nonumber \\
\bigg| E\left( \gamma_p^{-2} \sum_{k=1}^{p} B_{k}(b) \right) \bigg| & \leq & C(\tau-b)^2 ||\mu_1-\mu_2||_2^2 n \gamma_p^{-2} + C|\tau-b|p, \nonumber \\
\bigg| E\left( \gamma_p^{-2} \sum_{k=1}^{p}\sum_{\stackrel{i=1}{i \neq 2,5}}^{7} D_{ik}(b)\right)\bigg| & \leq & C|\tau-b|p, \nonumber \\
\text{V}\left(\gamma_p^{-2} \sum_{k=1}^{p} A_{k}(b) \right)  & \leq & C|\tau-b|p, \nonumber \\
\text{V}\left(\gamma_p^{-2} \sum_{k=1}^{p} B_{k}(b) \right)  & \leq & C(\tau-b)^2||\mu_1-\mu_2||_2^2 n \gamma_p^{-2} + C|\tau-b|p, \nonumber \\
\text{V} \left( \gamma_p^{-2} \sum_{k=1}^{p}\sum_{\stackrel{i=1}{i \neq 2,5}}^{7} D_{ik}(b) \right) &\leq & C(\tau-b)^2||\mu_1-\mu_2||_2^2 n \gamma_p^{-2} + C|\tau-b|p, \nonumber \\
\gamma_p^{-2} \sum_{k=1}^{p} D_{2k}(b) &=& n\gamma_p^{-2} (\tau-b)\left(\frac{1-\tau}{1-b} \right)^2 ||\mu_1-\mu_2||_2^2, \nonumber \\
\gamma_p^{-2} \sum_{k=1}^{p} D_{5k}(b) &=& \frac{2n\gamma_p^{-2}(\tau-b)}{n(\tau-b)}\frac{1-\tau}{1-b}\sum_{k=1}^{p} \sum_{t=nb+1}^{n\tau} (X_{kt}-EX_{kt})(\mu_{1k}-\mu_{2k}). \nonumber 
\end{eqnarray}
Thus taking $n\gamma_p^{-2}||\mu_1-\mu_2||_2^2 (\tau-b) = h$, where $\gamma_p^{-2}||\mu_1-\mu_2||_2^2 \to 0$ or $c>0$, we have
\begin{eqnarray}
\sup_{h \in \mathcal{C}} |n\gamma_p^{-2}(L(b)-L(\tau)) - \gamma_p^{-2} \sum_{k=1}^{p} (D_{2k}(b) + D_{5k}(b))| \stackrel{\text{P}}{\to} 0 \nonumber
\end{eqnarray}
for some compact set $\mathcal{C} \subset \mathbb{R}$. 
\vskip 5pt
\noindent \textbf{Proof of (b)}.
 To prove Theorem \ref{thm: asympdist}(b),  by Lemma \ref{lem: wvandis1}, it is enough to establish
\begin{align} \label{eqn: enoughlse2b}
\sup_{h \in \mathcal{C}} |\gamma_p^{-2} \sum_{k=1}^{p} (D_{2k}(b) + D_{5k}(b)) - |h| -  2B_h^*| \stackrel{\text{P}}{\to} 0, \nonumber
\end{align} 
as $\gamma_p^{-2}||\mu_1 - \mu_2||_2 \to 0$, and for all compact subsets $\mathcal{C}$ of $\mathbb{R}$.  

\noindent It is easy to see that
\begin{eqnarray}
\sup_{h \in \mathcal{C}} |\gamma_p^{-2} \sum_{k=1}^{p} D_{2k}(b)  - |h| | \stackrel{\text{P}}{\to} 0, \label{eqn: asymph}
\nonumber
\end{eqnarray}
as $\gamma_p^{-2}||\mu_1 - \mu_2||_2 \stackrel{\text{P}}{\to} 0$, and for all compact subsets $\mathcal{C}$ of $\mathbb{R}$.  
 
 \noindent Now note that
 \begin{eqnarray}
 && E (\gamma_p^{-2} \sum_{k=1}^{p} D_{5k}(b))^4  \leq  C \gamma_p^{-8} \sum_{k_1,k_2,k_3,k_4=1}^{p} \sum_{t_1,t_2,t_3,t_4=nb+1}^{n\tau} \left( \prod_{l=1}^{4} (\mu_{1k_l}-\mu_{2k_l}) \right) \nonumber \\
 & &  \sum_{j_1,j_2,j_3,j_4=0}^{\infty} \sum_{i_1,i_2,i_3,i_4=1}^{p} A_{j_1}(k_1,i_1) A_{j_2}(k_2,i_2) A_{j_3}(k_3,i_3)  A_{j_4}(k_4,i_4) E(\eta_{t_1-j_1,i_1} \eta_{t_2-j_2,i_2}\eta_{t_3-j_3,i_3} \eta_{t_4-j_4,i_4})  \nonumber \\
 &\leq & C \frac{(\sup_{k} |\mu_{1k}-\mu_{2k}| )^4 \gamma_p^{-4}}{||\mu_1-\mu_2||_2^4} \left(\sum_{j=0}^{\infty} \sum_{k=1}^{p}\sum_{i=1}^{p} A_{j}(k,i)\right)^4 \nonumber \\
 & \leq &  C \bigg[\frac{\sup_{k} |\mu_{1k}-\mu_{2k}| }{||\mu_1-\mu_2||_2} \bigg]^{4}. \nonumber 
 \end{eqnarray}
Therefore, by (A3) and Lyapunov's Central Limit Theorem, 
\begin{eqnarray}
\sup_{h \in \mathcal{C}} |\gamma_p^{-2} \sum_{k=1}^{p} D_{5k}(b)   -  2B_h^*| \stackrel{\text{P}}{\to} 0
\end{eqnarray}
where for all $h_1,h_2,\ldots,h_r \in \mathbb{R}$ and $r \geq 1$,
\begin{eqnarray}
(B_{h_1}^*,B_{h_2}^*,\ldots,B_{h_r}^*) \sim \mathcal{N}_{r}(0,\Sigma),\ \ \Sigma = ((\sigma_{h_ih_j}))_{1 \leq i,j \leq r}. \nonumber 
\end{eqnarray}
This completes the proof of Theorem \ref{thm: asympdist}(b). 
\vskip 10pt
\noindent \textbf{Proof of (c)}. It is easy to see that
\begin{eqnarray}
\gamma_p^{-2} \sum_{k=1}^{p} (D_{2k}(b) + D_{5k}(b)) &=& \gamma_{p}^{-2}\sum_{t=nb+1}^{n\tau} \sum_{k\in \mathcal{K}_0} [(X_{tk}- {\mu}_{2k})^2 - (X_{tk}-\mu_{1k})^2] \nonumber \\
&& \hspace{-2 cm}+ 2\gamma_p^{-2}\sum_{k \in \mathcal{K}_n}  \sum_{t=nb+1}^{n\tau} (\mu_{1k}-\mu_{2k}) \varepsilon_{kt} 
+ n\gamma_p^{-2} (\tau-b)\sum_{k \in \mathcal{K}_n}(\mu_{1k}-\mu_{2k})^2 + o_{\text{P}}(1). \nonumber \\
&=& M_{n}^{II}(h) + M_{n}^{Ia}(h) + M_{n}^{Ib}(h), \ \ \text{say}. \nonumber 
\end{eqnarray}
Note that $$\sup_{h \in \mathcal{C}} |M_{n}^{Ib}(h) - |h|c_1c^{-1}| \stackrel{\text{P}}{\to} 0.$$ Moreover, by (A4)-(A8),  $$\sup_{h \in \mathcal{C}} |M_{n}^{II}(h) -\sum_{t=0\wedge (h/c)}^{0\vee (h/c)} A_t^*| \stackrel{\text{P}}{\to} 0.$$
Also by (A6), (A7) and (A9)
$$\sup_{h \in \mathcal{C}} |M_{n}^{Ia}(h) - 2\sum_{t=0\wedge (h/c)}^{0\vee (h/c)} W_t^*| \stackrel{\text{P}}{\to} 0.$$
This completes the proof of Theorem \ref{thm: asympdist}(c). 
\vskip 5pt
\noindent Hence, Theorem \ref{thm: asympdist} is proved. 

\subsection{Proof of Propositions \ref{prop: 1} and \ref{prop: 2}} \label{subsec: prop}
Here we prove only Proposition \ref{prop: 2}. Similar proof works for Proposition \ref{prop: 1}. 
\vskip 2pt
\noindent (I) Note that
\begin{eqnarray}
&& \gamma_{p(n+1)}^{-4}(\mu_{1,p(n+1)} - \mu_{2,p(n+1)})^\prime \Gamma_{0,p(n+1)} (\mu_{1,p(n+1)} - \mu_{2,p(n+1)}) \nonumber \\
&=& \gamma_{p(n+1)}^{-4} \sum_{i,j,l=1}^{p(n+1)} \sum_{k=0}^{\infty} (\mu_{1i,p(n+1)} - \mu_{2i,p(n+1)})(\mu_{1j,p(n+1)} - \mu_{2j,p(n+1)}) A_{k}(i,l)A_k(l,j) \nonumber \\
&=& \gamma_{p(n+1)}^{-4} \sum_{l=1}^{p(n+1)} \sum_{i,j=1}^{p(n)} \sum_{k=0}^{\infty} (\mu_{1i,p(n+1)} - \mu_{2i,p(n+1)})(\mu_{1j,p(n+1)} - \mu_{2j,p(n+1)}) A_{k}(i,l)A_k(l,j) \nonumber \\
&& +\gamma_{p(n+1)}^{-4} \sum_{l=1}^{p(n+1)} \sum_{\stackrel{i>p(n)}{\text{or}\ j > p(n)}} \sum_{k=0}^{\infty} (\mu_{1i,p(n+1)} - \mu_{2i,p(n+1)})(\mu_{1j,p(n+1)} - \mu_{2j,p(n+1)}) A_{k}(i,l)A_k(l,j) \nonumber \\
&=& \gamma_{p(n+1)}^{-4} \sum_{l=1}^{p(n+1)} \sum_{i,j=1}^{p(n)} \sum_{k=0}^{\infty} \left((\mu_{1i}(n+1)-\mu_{1i}(n))+(\mu_{1i}(n)-\mu_{2i}(n))+(\mu_{2i}(n)-\mu_{2i}(n+1)) \right)\nonumber \\
&& \hspace{1 cm}\left((\mu_{1j}(n+1)-\mu_{1j}(n))+(\mu_{1j}(n)-\mu_{2j}(n))+(\mu_{2j}(n)-\mu_{2j}(n+1)) \right) A_{k}(i,l)A_k(l,j) \nonumber \\
&& +\gamma_{p(n+1)}^{-4} \sum_{l=1}^{p(n+1)} \sum_{\stackrel{i>p(n)}{\text{or}\ j > p(n)}} \sum_{k=0}^{\infty} (\mu_{1i,p(n+1)} - \mu_{2i,p(n+1)})(\mu_{1j,p(n+1)} - \mu_{2j,p(n+1)}) A_{k}(i,l)A_k(l,j). \nonumber 
\end{eqnarray} 
Thus
\begin{eqnarray}
&& | \gamma_{p(n+1)}^{-4} (\mu_{1,p(n+1)}-\mu_{2,p(n+1)})^\prime \Gamma_{0,p(n+1)} (\mu_{1,p(n+1)}-\mu_{2,p(n+1)}) \nonumber \\
&& \hspace{5 cm}- \gamma_{p(n)}^{-4}(\mu_{1,p(n)}-\mu_{2,p(n)})^\prime  \Gamma_{0,p(n)}(\mu_{1,p(n)}-\mu_{2,p(n)})| \nonumber \\
& \leq & \gamma_{p(n+1)}^{-4} \beta_{p(n+1)}^{2} p(n+1) \bigg[\sup_{j=1,2}\sup_{1 \leq i \leq p(n+1)} |\mu_{ji,p(n+1)}-\mu_{ji,p(n)}| \nonumber \\
&& \hspace{5 cm}+ \displaystyle{\sup_{1 \leq i \leq p(n+1)} |\mu_{1i,p(n)}-\mu_{2i,p(n)}|} \bigg]^2 \nonumber \\
&&+ (\gamma_{p(n+1)}^{-4}-\gamma_{p(n)}^{-4})\beta_{p(n+1)}^{2}p(n+1) \bigg[\sup_{j=1,2}\sup_{1 \leq i \leq p(n+1)} |\mu_{ji,p(n+1)}-\mu_{ji,p(n)}| \nonumber \\
&& \hspace{5 cm}+ \displaystyle{\sup_{1 \leq i \leq p(n+1)} |\mu_{1i,p(n)}-\mu_{2i,p(n)}|} \bigg] \nonumber \\
&& +\gamma_{p(n)}^{-4}\beta_{p(n)}^{2}(p(n+1)-p(n)) \bigg[ \sup_{1 \leq i \leq p(n)} |\mu_{1i}(n)-\mu_{2i}(n) |\bigg]^2 \nonumber \\
&& + \gamma_{p(n+1)}^{-4}\beta_{p(n+1)}^2 p(n+1) \bigg[ \sup_{1 \leq i \leq p(n+1)} |\mu_{1i}(n+1)-\mu_{2i}(n+1) | \bigg]^2 \nonumber \\
&& \to 0,\ \ \text{by (a),(b),(c),(g) and (h)}. \nonumber
\end{eqnarray}
This completes the proof of Proposition \ref{prop: 2}(I). 
\vskip 2pt
\noindent Proof of Proposition \ref{prop: 2}(II) follows from similar arguments in the proof of Proposition \ref{prop: 2}(I). 
\vskip 2pt
\noindent (III) It is easy to see that
\begin{eqnarray}
&&\gamma_{p(n+1)}^{-2}\Gamma_{0,p(n+1)}(k_1,k_2) = \gamma_{p(n+1)}^{-2}\sum_{k=0}^{\infty}\sum_{l=1}^{p(n+1)} A_{k}(k_1,l) A_{k}(l,k_2) \nonumber \\
&=& (\gamma_{p(n+1)}^{-2} - \gamma_{p(n)}^{-2}) \sum_{k=0}^{\infty}\sum_{l=1}^{p(n+1)} A_{k}(k_1,l) A_{k}(l,k_2) 
+ \gamma_{p(n)}^{-2} \sum_{k=0}^{\infty}\sum_{l=p(n)+1}^{p(n+1)} A_{k}(k_1,l) A_{k}(l,k_2)\nonumber \\
&& \hspace{7 cm}+ \gamma_{p(n)}^{-2}\Gamma_{0,p(n)}(k_1,k_2). \nonumber 
\end{eqnarray}
Hence, by (a) and (c), we have
\begin{eqnarray}
|\gamma_{p(n+1)}^{-2}\Gamma_{0,p(n+1)}(k_1,k_2) - \gamma_{p(n)}^{-2}\Gamma_{0,p(n)}(k_1,k_2)| 
& \leq & |\gamma_{p(n+1)}^{-2} - \gamma_{p(n)}^{-2}| \gamma_{p(n+1)}^2 + |p(n+1)-p(n)| \nonumber\\
&& \hspace{5 cm} \to 0. \nonumber 
\end{eqnarray}
This proves Proposition \ref{prop: 2}(III). 
\vskip 2pt
\noindent This completes the proof of Proposition \ref{prop: 2}.

\subsection{Proof of Theorem \ref{thm: adap}} \label{subsec: adap}
Without loss of generality, assume $h>0$. Note that
\begin{eqnarray}
\hat{L}(h) - \hat{L}(0) &=& \frac{1}{n}\sum_{k=1}^{p}\sum_{t=n\hat{\tau}_n+1}^{n\hat{\tau}_n +nh} (2X_{kt,p,\text{ADAP}}-\hat{\mu}_{1k}-\hat{\mu}_{2k})(\hat{\mu}_{2k}-\hat{\mu}_{1k}). \nonumber
\end{eqnarray}
Let $E^*(\cdot) = E(\cdot|\{X_{tp}\})$ and $\text{V}^*(\cdot) = \text{V}(\cdot|\{X_{tp}\})$. Therefore,
\begin{eqnarray}
E^{*}(\hat{L}(h) - \hat{L}(0)) &=& h||\hat{\mu}_1-\hat{\mu}_2||_2^2, \nonumber \\
\text{V}^{*} (\hat{L}(h) - \hat{L}(0)) &\leq & C\frac{1}{n^2} \sum_{k_1,k_2}\sum_{t_1,t_2= n\hat{\tau}_n+1}^{n\hat{\tau}_n +h} (\hat{\mu}_{1k_1}-\hat{\mu}_{2k_1})(\hat{\mu}_{1k_1}-\hat{\mu}_{2k_1}) B_{l_n}(\hat{\Gamma}_{t_1-t_2})(k_1,k_2)  \nonumber \\
& \leq & \frac{h}{n} \gamma_p^2 ||\hat{\mu}_1-\hat{\mu}_2||_2^2. \nonumber
\end{eqnarray}
Hence by Lemma \ref{lem: wvan1} and similar arguments at the beginning of Section \ref{subsec: conrate}, we have
\begin{eqnarray}
n\gamma_p^{-2}||\hat{\mu}_1-\hat{\mu}_2||_2^2 \hat{h}_{\text{ADAP}} = O_{\text{P}}(1). \nonumber
\end{eqnarray}
Now by Lemma \ref{lem: adap}(c),
\begin{eqnarray}
n\gamma_p^{-2}||{\mu}_1-{\mu}_2||_2^2 \hat{h}_{\text{ADAP}} = O_{\text{P}}(1). \nonumber
\end{eqnarray}
This implies Theorem \ref{thm: adap}(a). 
\vskip 2pt
\noindent Now,
\begin{eqnarray}
&& n\gamma_p^{-2}(\hat{L}(h\gamma_p^{2}||\hat{\mu}_1-\hat{\mu}_2||_2^{-2}/n)-\hat{L}(0))\nonumber \\
& = & |h| + 2\gamma_p^{-2}\sum_{k=1}^{p}\sum_{t=n\hat{\tau}_n+1}^{n\hat{\tau}_n +h\gamma_p^{2}||\hat{\mu}_1-\hat{\mu}_2||_2^{-2}} (X_{kt,p,\text{ADAP}}-\hat{\mu}_{2k})(\hat{\mu}_{2k}-\hat{\mu}_{1k}) +o_{\text{P}}(1). \nonumber 
\end{eqnarray}
Note that $\{X_{kt,p,\text{ADAP}}\}$ are Gaussian. Hence Theorem \ref{thm: adap}(b) follows from Lemma \ref{lem: wvandis1} and Lemma \ref{lem: adap}(a)-(d) and (g). 
\vskip 3pt
\noindent A similar argument as in the proof of Theorem \ref{thm: asympdist}(c) and similar approximations as in the proof of Theorem \ref{thm: adap}(b) also work for Theorem \ref{thm: adap}(c) and hence we omit them. Hence, Theorem \ref{thm: adap} is established.

\end{document}